\newdimen\plusheight
\def\+{\;\lower\plusheight\hbox{$+$}\;}
\newdimen\minusheight
\def\-{\;\lower\minusheight\hbox{$-$}\;}
\newdimen\cdotsheight
\def\cds{\lower\cdotsheight\hbox{$\cdots$}}
\numberwithin{equation}{section}
 \theoremstyle{plain}
\newtheorem{theorem}{Theorem}[section]
\newtheorem{lemma}[theorem]{Lemma}
\newtheorem{corollary}[theorem]{Corollary}
\newtheorem{property}[theorem]{Property}
\newtheorem{remark}[theorem]{Remark}
\newtheorem{definition}[theorem]{Definition}
\newtheorem{proposition}[theorem]{Proposition}
\newtheorem{example}[theorem]{Example}
\newcommand{\be}{\begin{equation}}
\newcommand{\ee}{\end{equation}}
\newcommand{\beba}{\begin{equation}\begin{array}{lcl}}
\newcommand{\eaee}{\end{array}\end{equation}}
\newcommand{\bea}{\begin{eqnarray}}
\newcommand{\eea}{\end{eqnarray}}
\newcommand{\ba}{\begin{array}}
\newcommand{\ea}{\end{array}}
\newcommand{\bal}{\begin{align}}
\newcommand{\eal}{\end{align}}
\begin{document}

\title[Fibonacci Numbers, Generalized Fibonacci Numbers and Generalized Fibonacci Polynomial] {Some Properties of Fibonacci Numbers, Generalized Fibonacci Numbers and Generalized Fibonacci Polynomial Sequences}
\author{Alexandre Laugier}
\address{Lyc{\'e}e professionnel Tristan Corbi{\`e}re, 16 rue de Kerveguen - BP 17149, 29671 Morlaix cedex, France}
\email{laugier.alexandre@orange.fr}
\author{Manjil P. Saikia}
\footnotetext[1]{Corresponding Author: manjil@gonitsora.com.}
\address{Mathematics Diploma Student, The Abdus Salam International Centre for Theoretical Physics, Strada Costiera, 11-34151, Trieste, Italy}
\email{manjil@gonitsora.com, msaikia@ictp.it}
\thanks{The work of the second author was supported by DST, Govt. of India, INSPIRE Scholarship 422/2009.}

\begin{abstract}
In this paper we study the Fibonacci numbers and derive some interesting properties and recurrence relations. We prove some charecterizations for $F_p$, where $p$ is a prime of a certain type. We also define period of a Fibonacci 
sequence modulo an integer, $m$ and derive certain interesting
properties related to them. Afterwards, we derive some new properties of a class of generalized Fibonacci numbers. In the last part of the paper we introduce some generalized Fibonacci polynomial sequences and we derive some results related to them.
\end{abstract}

\maketitle
\tableofcontents

\vskip 3mm

\noindent{\footnotesize Fibonacci numbers, congruences, period of Fibonacci sequence.}

\vskip 3mm

\noindent{\footnotesize 2010 Mathematical Reviews Classification
Numbers: 11A07, 11B37, 11B39, 11B50.}

\section{{Preliminaries}}

This paper is divided into six sections. This section is devoted to stating few results that will be used in the remainder of the paper. We also 
set the notations to be used and derive few simple results that will come in handy in our treatment. In Section 2, we charecterize numbers $5k+2$, which are primes 
with $k$ being an odd natural number. In Section 3, we prove more general results than given in Section 2. In Section 4, we define the 
period of a Fibonacci sequence modulo some number and derive many properties of this concept. In Section 5, we shall devote to the study of a class of generalized Fibonacci numbers and derive some interesting results related to them. Finally, in Section 6, we define some generalized Fibonacci polynomial sequences and we obtain some results related to them.

We begin with the following famous results without proof except for
some related properties.

\begin{lemma}[Euclid]\label{euclid}
If $ab\equiv 0\pmod p$ with $a,b$ two integers and $p$ a prime, then
either $p|a$ or $p|b$. 
\end{lemma}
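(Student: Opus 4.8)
The plan is to prove this dichotomy by assuming that $p$ does not divide $a$ and showing that then $p$ must divide $b$. The central tool I would invoke is B\'ezout's identity, which converts the coprimality of $p$ and $a$ into a concrete linear relation that interacts well with the divisibility hypothesis.

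First I would dispose of the easy case: if $p \mid a$ the conclusion already holds, so I may assume $p \nmid a$. Since $p$ is prime, its only positive divisors are $1$ and $p$; because $p \nmid a$, the only common divisor of $p$ and $a$ is $1$, whence $\gcd(p,a) = 1$. Next I would apply B\'ezout's identity to this coprime pair to obtain integers $x$ and $y$ satisfying $px + ay = 1$.

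Then I would multiply this relation through by $b$, yielding $pbx + aby = b$. The hypothesis $ab \equiv 0 \pmod p$ says precisely that $p \mid ab$, so $p$ divides each summand on the left: it divides $pbx$ trivially and divides $(ab)y$ since it divides $ab$. Therefore $p$ divides the sum $b$, giving $p \mid b$ and completing the argument.

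The main obstacle is not computational but a question of what one is entitled to assume. The entire proof rests on B\'ezout's identity, equivalently on the fact that $\gcd(p,a)$ can be written as an integer combination of $p$ and $a$ via the Euclidean algorithm. If that is granted, the remaining steps are routine. If it must be justified from scratch, the crux would be to first establish B\'ezout, for instance by letting $d$ be the least positive integer expressible as $px + ay$ and then showing, by the division algorithm, that $d$ divides both $p$ and $a$ and hence equals $\gcd(p,a) = 1$.
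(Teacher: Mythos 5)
Your proof is correct: assuming $p\nmid a$, primality gives $\gcd(p,a)=1$, B\'ezout yields $px+ay=1$, and multiplying by $b$ shows $p\mid b$. The paper itself states this lemma as a known result \emph{without} proof, so there is no internal argument to compare against; your B\'ezout-based route is the standard one and is consistent with the paper's toolkit, since B\'ezout's identity is already invoked in the proof of Property \ref{peqc}.
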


\begin{remark}
In particular, if $\gcd(a,b)=1$, $p$ divides only one of the numbers
$a,b$.
\end{remark}

\begin{property}\label{peqc}
Let $a,b$ two positive integers, $m,n$ two integers
such that $(|m|,|n|)=1$ and $p$ a natural number. Then
$$
ma\equiv nb\pmod p$$
if and only if there exists $c\in\mathbb{Z}$ such that
$$a\equiv nc\pmod p$$ and 
$$b\equiv mc\pmod p.$$
\end{property}

\begin{proof}
Let $a,b$ two positive integers, $m,n$ two integers
such that $(|m|,|n|)=1$ and $p$ a natural
number.

If there exists an integer $c$ such that 
$a\equiv nc\pmod p$ and $b\equiv mc\pmod p$, then $ma\equiv
mnc\pmod p$ and $nb\equiv mnc\pmod p$. So, we have $ma\equiv
nb\pmod p$.

Conversely, if $ma\equiv nb\pmod p$ with $(|m|,|n|)=1$, then from Bezout's identity, there exist three integers $u,v,k$ such that
$$
um+vn=1
$$
and $$
ma-nb=kp.
$$
So, we have
$$
ukpm+vkpn=ma-nb
$$ and 
$$
m(a-ukp)=n(b+vkp).
$$
Since $|m|,|n|$ are relatively prime, from the Lemma \ref{euclid},
it implies that there exist two integers $c,d$ such that
$$
a-ukp=nc,
$$ and 
$$
b+vkp=md.
$$
It results that $mnc=mnd$ and so $c=d$. Therefore, we obtain
$$
a=nc+ukp\equiv nc\pmod p,
$$ and 
$$
b=mc-vkp\equiv mc\pmod p.
$$
\end{proof}

\begin{remark}
Using the notations given in the proof of Property \ref{peqc}, we
can see that if there exists an integer $c$ such that 
$a\equiv nc\pmod p$ and $b\equiv mc\pmod p$ with $(|m|,|n|)=1$ and $p$
a natural number, then we have
$$
ub+va\equiv (um+vn)c\equiv c\pmod p
$$
Moreover, denoting by $g$ the $gcd$ of $a$ and $b$, if $a=gn$ and
$b=gm$, then $ub+va=(um+vn)g=g$, then $g\equiv c\pmod p$.
\end{remark}

\begin{theorem}[Fermat's Little Theorem]\label{flt}
 If $p$ is a prime and $n\in \mathbb{N}$ relatively prime to $p$, then
 $n^{p-1}\equiv 1~(\textup{mod}~p)$. 
\end{theorem}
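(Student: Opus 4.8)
The plan is to prove the statement by the classical rearrangement argument, since it uses only Euclid's lemma (Lemma~\ref{euclid}), which is already at our disposal. Fix a prime $p$ and a natural number $n$ with $\gcd(n,p)=1$. First I would form the list of the $p-1$ nonzero multiples of $n$,
$$
n,\ 2n,\ 3n,\ \ldots,\ (p-1)n,
$$
and consider each of these modulo $p$. The goal is to show that, as a set, their residues are precisely $\{1,2,\ldots,p-1\}$, i.e.\ that multiplication by $n$ permutes the nonzero residues modulo $p$.

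Two facts are needed. First, none of the multiples is divisible by $p$: if $p\mid in$ for some $1\le i\le p-1$, then by Lemma~\ref{euclid} either $p\mid i$ or $p\mid n$, and both are impossible because $0<i<p$ and $\gcd(n,p)=1$. Second, the residues are pairwise distinct: if $in\equiv jn\pmod p$ with $1\le i,j\le p-1$, then $p\mid (i-j)n$, so Lemma~\ref{euclid} forces $p\mid(i-j)$, whence $i=j$ since $|i-j|<p$. Thus the $p-1$ residues are distinct and all lie in $\{1,\ldots,p-1\}$, so they exhaust this set.

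Finally I would multiply the $p-1$ congruences together. Since reordering the factors on the right does not change the product modulo $p$, I obtain
$$
n^{p-1}\,(p-1)!\ \equiv\ (p-1)!\pmod p.
$$
Because each of the factors $1,2,\ldots,p-1$ is coprime to $p$, so is $(p-1)!$, and I may cancel it (again via Lemma~\ref{euclid}) to conclude $n^{p-1}\equiv 1\pmod p$. The only delicate point is the bijection step: everything hinges on applying Euclid's lemma correctly to rule out both zero residues and collisions, after which the cancellation of $(p-1)!$ is routine. As an alternative one could instead prove $n^p\equiv n\pmod p$ by induction on $n$ using the divisibility $p\mid\binom{p}{k}$ for $0<k<p$, and then cancel a factor of $n$; I would prefer the rearrangement proof here, as it meshes directly with the lemmas already established.
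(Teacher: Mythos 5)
Your proof is correct: the rearrangement argument is carried out properly, with Euclid's lemma (Lemma~\ref{euclid}) correctly invoked three times — to rule out zero residues, to rule out collisions, and to cancel $(p-1)!$ at the end. Note that the paper itself gives no proof of this theorem; it is stated as a known result with the reader referred to the cited textbooks, so there is nothing to compare against beyond observing that your argument is the standard one found there and that it relies only on machinery already available at that point in the paper.
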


\begin{theorem}\label{t1.1}
If $x^2\equiv 1\pmod p$ with $p$ a prime, then either $x\equiv 1\pmod p$ or $x\equiv p-1\pmod p$.
\end{theorem}

\begin{proof}If $x^2\equiv 1\pmod p$ with $p$ a prime, then we have
$$
x^2-1\equiv 0\pmod p
$$
$$
(x-1)(x+1)\equiv 0\pmod p
$$

$x-1\equiv
0\pmod p$ or $x+1\equiv 0\pmod p$. It is equivalent to say that
$x\equiv 1\pmod p$ or $x\equiv -1\equiv p-1\pmod p$.
\end{proof}

\begin{definition}
Let $p$ be an odd prime and $gcd(a,p)=1$. If the congruence $x^2\equiv
a\pmod p$ has a solution, then $a$ is said to be a quadratic residue
of p. Otherwise, a is called a quadratic nonresidue of p.
\end{definition}

\begin{theorem}[Euler]\label{eul}
 Let $p$ be an odd prime and $\textup{gcd}(a,p)=1$. Then $a$ is a
 quadratic nonresidue of $p$ if and only if 
$a^{\frac{p-1}{2}}\equiv -1~(\textup{mod}~p)$.
\end{theorem}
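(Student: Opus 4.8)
The plan is to combine Fermat's Little Theorem with Theorem~\ref{t1.1} to pin down the possible values of $a^{(p-1)/2}$, and then to separate the quadratic residues from the nonresidues by a counting argument. First I would observe that since $\gcd(a,p)=1$, Theorem~\ref{flt} gives $a^{p-1}\equiv 1\pmod p$, so that $\bigl(a^{(p-1)/2}\bigr)^2\equiv 1\pmod p$. Applying Theorem~\ref{t1.1} with $x=a^{(p-1)/2}$ forces $a^{(p-1)/2}\equiv 1\pmod p$ or $a^{(p-1)/2}\equiv -1\pmod p$. Thus the claim reduces to deciding, for a given $a$, which of the two values occurs, and showing it is $-1$ exactly when $a$ is a nonresidue.

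The easy implication is that every quadratic residue yields $+1$. If $a$ is a quadratic residue, then by definition there is an $x$ with $x^2\equiv a\pmod p$, and $\gcd(x,p)=1$ since $\gcd(a,p)=1$. Then
$$
a^{(p-1)/2}\equiv (x^2)^{(p-1)/2}\equiv x^{p-1}\equiv 1\pmod p
$$
by Fermat's Little Theorem. Contrapositively, if $a^{(p-1)/2}\equiv -1\pmod p$ then $a$ cannot be a residue, which is one direction of the stated equivalence.

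For the converse I would argue by counting. First I would show there are exactly $(p-1)/2$ quadratic residues among $1,\dots,p-1$: the squaring map $x\mapsto x^2$ is two-to-one, because $x^2\equiv y^2\pmod p$ gives $(x-y)(x+y)\equiv 0\pmod p$ and hence $x\equiv \pm y\pmod p$ by Lemma~\ref{euclid}. By the easy implication, each of these $(p-1)/2$ residues is a solution of the congruence $t^{(p-1)/2}\equiv 1\pmod p$. The key step is that this congruence has at most $(p-1)/2$ solutions modulo $p$; granting this, the residues account for all of them, so every nonresidue $a$ must fall into the other case $a^{(p-1)/2}\equiv -1\pmod p$, completing the proof.

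The main obstacle is the bound on the number of solutions of $t^{(p-1)/2}\equiv 1\pmod p$. I would prove the general fact that a nonzero polynomial congruence of degree $d$ has at most $d$ roots modulo a prime by induction on $d$: if $r$ is a root, one factors out $(t-r)$ and uses Euclid's lemma (Lemma~\ref{euclid}) to pass to a quotient of degree $d-1$. An alternative route that avoids this polynomial bound is a pairing argument: when $a$ is a nonresidue, the map $i\mapsto a\,i^{-1}\pmod p$ partitions $\{1,\dots,p-1\}$ into $(p-1)/2$ disjoint pairs, each with product $\equiv a$ (the pairs are genuine since $i=j$ would give $i^2\equiv a$), whence $(p-1)!\equiv a^{(p-1)/2}\pmod p$; combined with Wilson's theorem $(p-1)!\equiv -1\pmod p$ this again yields $a^{(p-1)/2}\equiv -1\pmod p$. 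Either way, the crux is establishing the exact count of residues versus nonresidues.
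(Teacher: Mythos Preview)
Your argument is correct and is one of the standard proofs of Euler's criterion. However, there is nothing to compare it against: the paper does not give its own proof of this statement. Theorem~\ref{eul} is listed in the preliminaries section among results quoted without proof, and the authors explicitly write ``For proofs of the above theorems the reader is suggested to see \cite{dmb} or \cite{mol}.'' So your proposal is not a match or a mismatch with the paper's approach; it simply supplies what the paper deliberately omitted.

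For what it is worth, both routes you sketch for the harder direction---the polynomial root bound modulo a prime, and the Wilson-theorem pairing argument---are standard textbook proofs (the first is essentially the one in Burton~\cite{dmb}). Either is acceptable here. If you want the argument to be fully self-contained within the resources the paper actually states, note that Wilson's theorem is not among the preliminaries listed, so the polynomial-root route (which needs only Lemma~\ref{euclid}) fits more cleanly with what the paper has on hand.
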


\begin{definition}
 Let $p$ be an odd prime and let $\textup{gcd}(a,p)=1$. The \textit{Legendre symbol} $(a/p)$ is defined to be equal to $1$ if $a$ is a quadratic residue of $p$ and 
is equal to $-1$ is $a$ is a quadratic non residue of $p$.
\end{definition}

\begin{property}\label{lp}
Let $p$ an odd prime and $a$ and $b$ be integers which are relatively
prime to $p$. Then the Legendre symbol has the following properties:
\begin{enumerate}%[label=(\roman*),labelsep=*,align=left,leftmargin=0.3in]
\item If $a\equiv b\pmod p$, then $(a/p)=(b/p)$.
\item $(a/p)\equiv a^{(p-1)/2}\pmod p$
\item $(ab/p)=(a/p)(b/p)$
\end{enumerate}
\end{property}

\begin{remark}\label{rmpls}
Taking $a=b$ in (3) of Property \ref{lp}, we have
$$
(a^2/p)=(a/p)^2=1.
$$
\end{remark}

\begin{lemma}[Gauss]\label{GL}
Let $p$ be an odd prime and let $gcd(a,p)=1$. If $n$ denotes the
number of integers in the set $S=\left\{a,2a,3a,\ldots,
\left(\frac{p-1}{2}\right)a\right\}$, whose remainders upon division
by $p$ exceed $p/2$, then
$$
(a/p)=(-1)^n.
$$
\end{lemma}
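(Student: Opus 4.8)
The plan is to work with the least positive residues of the elements of $S$ modulo $p$ and to keep track of how many of them exceed $p/2$. For each $k$ with $1\le k\le (p-1)/2$, write $ka\equiv r_k\pmod p$ with $r_k\in\{1,2,\ldots,p-1\}$, and define a ``folded'' value $s_k$ by setting $s_k=r_k$ when $r_k<p/2$ and $s_k=p-r_k$ when $r_k>p/2$; since $p$ is odd no residue equals $p/2$, so this dichotomy is exhaustive and in either case $s_k\in\{1,\ldots,(p-1)/2\}$. By construction $ka\equiv\epsilon_k s_k\pmod p$, where $\epsilon_k=+1$ in the first case and $\epsilon_k=-1$ in the second, and by the definition of $n$ exactly $n$ of the signs $\epsilon_k$ equal $-1$.

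The central claim, and the step I expect to be the main obstacle, is that $k\mapsto s_k$ is a bijection of $\{1,\ldots,(p-1)/2\}$ onto itself, i.e.\ the $s_k$ are precisely the integers $1,2,\ldots,(p-1)/2$ in some order. As the domain and codomain have equal cardinality it suffices to prove injectivity. Suppose $s_j=s_k$ for some $1\le j,k\le (p-1)/2$. If $r_j$ and $r_k$ lie on the same side of $p/2$, then $r_j=r_k$, whence $ka\equiv ja\pmod p$; using $\gcd(a,p)=1$ together with Lemma \ref{euclid} this gives $p\mid(k-j)$, and since $|k-j|<p$ we get $k=j$. The delicate case is when the residues lie on opposite sides, say $r_j<p/2<r_k$: then $s_j=s_k$ forces $r_j=p-r_k$, that is $r_j+r_k\equiv 0\pmod p$, hence $(j+k)a\equiv 0\pmod p$ and so $p\mid(j+k)$. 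But $0<j+k\le p-1<p$, a contradiction. Ruling out this mixed case establishes the bijection.

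With the bijection in hand I would multiply the congruences $ka\equiv\epsilon_k s_k\pmod p$ over $k=1,\ldots,(p-1)/2$. The left-hand side is $a^{(p-1)/2}\left(\tfrac{p-1}{2}\right)!$, while the right-hand side is $(-1)^n\prod_k s_k=(-1)^n\left(\tfrac{p-1}{2}\right)!$, because the $s_k$ run through $1,\ldots,(p-1)/2$. Since $\left(\tfrac{p-1}{2}\right)!$ is coprime to $p$, cancelling it yields $a^{(p-1)/2}\equiv(-1)^n\pmod p$. Finally, by part (2) of Property \ref{lp} we have $(a/p)\equiv a^{(p-1)/2}\pmod p$, so $(a/p)\equiv(-1)^n\pmod p$; as both sides equal $\pm 1$ and $p$ is odd (so $1\not\equiv-1\pmod p$), this congruence is in fact an equality, giving $(a/p)=(-1)^n$.
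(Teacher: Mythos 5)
Your proof is correct and complete: it is the standard argument for Gauss's Lemma --- fold each least positive residue $r_k$ into $\{1,\ldots,\frac{p-1}{2}\}$, show by the two-case injectivity argument that the folded values are a permutation of that set, multiply the congruences and cancel $\left(\frac{p-1}{2}\right)!$ to get $a^{(p-1)/2}\equiv(-1)^n\pmod p$, and finish with Euler's criterion. The paper states this lemma without proof, deferring to its references, and your argument is precisely the classical one found there, so you have simply supplied a proof the paper omits.
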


\begin{corollary}\label{CofGL}
If $p$ is an odd prime, then
$$
(2/p)=\left\{\begin{array}{ccccc}
1 & if & p\equiv 1\pmod 8 & or & p\equiv 7\pmod 8 ,
\\
-1 & if & p\equiv 3\pmod 8 & or & p\equiv 5\pmod 8 .
\end{array}
\right.
$$
\end{corollary}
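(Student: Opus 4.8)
The plan is to invoke Gauss's Lemma (Lemma~\ref{GL}) with $a = 2$, reducing the computation of $(2/p)$ to a single parity count. For this choice the relevant set is
$$
S = \left\{2, 4, 6, \ldots, p-1\right\},
$$
consisting of the even multiples $2k$ with $1 \le k \le (p-1)/2$. Since every such element already lies strictly between $0$ and $p$, its remainder upon division by $p$ is simply $2k$ itself, so no actual reduction need be carried out. Thus $n$, the number of elements of $S$ whose remainder exceeds $p/2$, is exactly the number of $k$ in the range $1 \le k \le (p-1)/2$ satisfying $2k > p/2$, that is, $k > p/4$.

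First I would put this count into closed form. The integers $k$ with $1 \le k \le (p-1)/2$ and $k \le p/4$ number $\lfloor p/4 \rfloor$, so
$$
n = \frac{p-1}{2} - \left\lfloor \frac{p}{4} \right\rfloor ,
$$
and Gauss's Lemma then gives $(2/p) = (-1)^n$. It therefore remains only to determine the parity of $n$, which depends solely on the residue of $p$ modulo $8$.

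Next I would split into the four admissible residue classes of an odd prime modulo $8$. Writing $p = 8m + r$ with $r \in \{1, 3, 5, 7\}$ and evaluating both $(p-1)/2$ and $\lfloor p/4 \rfloor$ in each case, one finds $n = 2m$ when $r = 1$, $n = 2m+1$ when $r = 3$, $n = 2m+1$ when $r = 5$, and $n = 2m+2$ when $r = 7$. Hence $n$ is even precisely when $p \equiv 1 \pmod 8$ or $p \equiv 7 \pmod 8$, and odd precisely when $p \equiv 3 \pmod 8$ or $p \equiv 5 \pmod 8$, which yields the stated values of $(2/p)$. The computation is entirely routine; the only point demanding care is the evaluation of $\lfloor p/4 \rfloor$, since the integer part behaves differently according to whether the remainder of $p$ modulo $4$ is $1$ or $3$. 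This distinction is exactly what separates the four cases and ultimately fixes the sign, so I would keep the bookkeeping explicit there.
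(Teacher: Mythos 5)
Your proof is correct: the count $n=\frac{p-1}{2}-\lfloor p/4\rfloor$ and the four residue-class evaluations all check out, and deriving $(2/p)$ from Gauss's Lemma in exactly this way is the standard argument that the paper intends (it states the result as a corollary of Lemma~\ref{GL} and defers the proof to the cited textbooks). No gaps.
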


\begin{theorem}[Gauss' Quadratic Reciprocity Law]\label{GQRL}
If $p$ and $q$ are distinct odd primes, then
$$
(p/q)(q/p)=(-1)^{\frac{p-1}{2}\cdot\frac{q-1}{2}}.
$$
\end{theorem}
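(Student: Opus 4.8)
The plan is to deduce the reciprocity law from Gauss' Lemma (Lemma \ref{GL}) by the lattice-point counting argument due to Eisenstein. Writing $P=(p-1)/2$ and $Q=(q-1)/2$, the idea is to read each Legendre symbol off a sum of floor functions and then show that the two sums fit together geometrically to produce the exponent $\frac{p-1}{2}\cdot\frac{q-1}{2}$.

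First I would reduce Gauss' Lemma to a floor-sum. For $1\le k\le P$ write $kq=p\lfloor kq/p\rfloor+r_k$ with $0<r_k<p$, let $b_1,\ldots,b_\mu$ be the residues $r_k$ exceeding $p/2$ and let the $a_i$ be the remaining ones; Lemma \ref{GL} (applied with $a=q$) gives $(q/p)=(-1)^\mu$. The standard Gauss argument shows $\{a_i\}\cup\{p-b_j\}=\{1,2,\ldots,P\}$. Summing the residues in two ways and subtracting the identity $\sum_{k=1}^P k=(p^2-1)/8$ then yields a relation of the shape $(q-1)\frac{p^2-1}{8}=p\bigl(T-\mu\bigr)+2\sum_j b_j$, where $T=\sum_{k=1}^P\lfloor kq/p\rfloor$. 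Reading this modulo $2$ and using that $q-1$ is even while $p$ is odd forces $T\equiv\mu\pmod 2$, hence $(q/p)=(-1)^T$. By the symmetric computation, $(p/q)=(-1)^{T'}$ with $T'=\sum_{j=1}^Q\lfloor jp/q\rfloor$, so that $(p/q)(q/p)=(-1)^{T+T'}$.

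The geometric step then evaluates $T+T'$. I would count the lattice points $(x,y)$ with $1\le x\le P$ and $1\le y\le Q$: there are exactly $PQ$ of them, and since $p,q$ are distinct primes with $1\le x<p$, none of them lies on the line $py=qx$. Counting the points strictly below this line column by column gives $\sum_{k=1}^P\lfloor kq/p\rfloor=T$, while counting those strictly above it row by row gives $\sum_{j=1}^Q\lfloor jp/q\rfloor=T'$; as every point lies on exactly one side, $T+T'=PQ=\frac{p-1}{2}\cdot\frac{q-1}{2}$. Combining this with the previous paragraph gives $(p/q)(q/p)=(-1)^{\frac{p-1}{2}\cdot\frac{q-1}{2}}$, as claimed.

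I expect the main obstacle to be the parity bookkeeping in the second step, namely verifying that $\{a_i\}\cup\{p-b_j\}$ exhausts $\{1,\ldots,P\}$ and then tracking the factor of $2$ and the parities of $q-1$ and $p$ carefully enough to conclude $T\equiv\mu\pmod 2$. By contrast the lattice count is clean once the non-degeneracy observation (no lattice point on the diagonal) is in place, since the bound $\lfloor kq/p\rfloor\le Q$ holds automatically for $k\le P$ and the two column/row counts partition the rectangle.
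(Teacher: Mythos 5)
The paper does not prove Theorem \ref{GQRL} at all: it states the reciprocity law as a quoted classical result and refers the reader to the textbooks \cite{dmb} and \cite{mol} for a proof, so there is no internal argument to compare yours against. Your proposal is the standard Eisenstein lattice-point proof, and the outline is correct and complete in all essentials: the derivation of $(q-1)\frac{p^2-1}{8}=p(T-\mu)+2\sum_j b_j$ from Gauss' Lemma is right (using that $\{a_i\}\cup\{p-b_j\}=\{1,\dots,P\}$, which follows since the $a_i$ and $p-b_j$ are $P$ numbers in $[1,P]$ that are pairwise distinct because $a_i\equiv k_iq$ and $p-b_j\equiv -k_j'q$ cannot coincide without $p\mid k_i+k_j'$), the parity reduction $T\equiv\mu\pmod 2$ is valid since $q-1$ is even and $p$ is odd, and the column/row count of the rectangle $[1,P]\times[1,Q]$ split by the line $py=qx$ (which misses all lattice points since $p\nmid x$ for $1\le x\le P$) correctly gives $T+T'=PQ$. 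This is in fact essentially the proof given in Burton, the paper's own cited source, so your argument fills the gap the authors deliberately left to the references rather than diverging from anything in the paper.
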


\begin{corollary}\label{CofGQRL}
If $p$ and $q$ are distinct odd primes, then
$$
(p/q)=\left\{\begin{array}{ccccc}
(q/p) & if & p\equiv 1\pmod 4 & or & q\equiv 1\pmod 4 ,
\\
-(q/p) & if & p\equiv q\equiv 3\pmod 4 .
\end{array}
\right.
$$
\end{corollary}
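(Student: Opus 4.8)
The plan is to deduce everything directly from Gauss' Quadratic Reciprocity Law (Theorem \ref{GQRL}), which asserts that
$$(p/q)(q/p) = (-1)^{\frac{p-1}{2}\cdot\frac{q-1}{2}}.$$
The entire content of the corollary is encoded in the parity of the exponent $\frac{p-1}{2}\cdot\frac{q-1}{2}$, so the first step is to determine when this exponent is even and when it is odd.

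For an odd prime $r$, the integer $\frac{r-1}{2}$ is even precisely when $r \equiv 1 \pmod 4$ and odd precisely when $r \equiv 3 \pmod 4$; this follows by writing $r = 4k+1$ or $r = 4k+3$ and computing $\frac{r-1}{2} = 2k$ or $2k+1$ respectively. Consequently the product $\frac{p-1}{2}\cdot\frac{q-1}{2}$ is odd if and only if both factors are odd, that is, if and only if $p \equiv 3 \pmod 4$ and $q \equiv 3 \pmod 4$. In every other case, namely whenever $p \equiv 1 \pmod 4$ or $q \equiv 1 \pmod 4$, at least one factor is even and hence the product is even.

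It remains to translate these two parity cases back into the statement. If $p \equiv 1 \pmod 4$ or $q \equiv 1 \pmod 4$, the exponent is even and Theorem \ref{GQRL} gives $(p/q)(q/p) = 1$. Multiplying both sides by $(q/p)$ and using that $(q/p)^2 = 1$ (Remark \ref{rmpls}, valid since $q$ is prime to $p$) yields $(p/q) = (q/p)$. If instead $p \equiv q \equiv 3 \pmod 4$, the exponent is odd and Theorem \ref{GQRL} gives $(p/q)(q/p) = -1$; the same multiplication by $(q/p)$ then yields $(p/q) = -(q/p)$, completing the proof. No step here presents a genuine obstacle, since the argument is entirely a case analysis on the parity of $\frac{p-1}{2}\cdot\frac{q-1}{2}$; the only point requiring care is the clean handling of the final multiplication by $(q/p)$, which relies on $(q/p)^2 = 1$ rather than on dividing by a Legendre symbol.
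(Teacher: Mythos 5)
Your proof is correct and is exactly the standard derivation the paper implicitly relies on (the paper states this corollary without proof, referring the reader to the textbooks it cites): a parity analysis of the exponent $\frac{p-1}{2}\cdot\frac{q-1}{2}$ followed by multiplication by $(q/p)$ using $(q/p)^2=1$. No issues.
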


Throughout this paper, we assume $k\in \mathbb{N}$, unless otherwise stated.

From (1) of Property \ref{lp}, Theorem \ref{GQRL}, the
corollary \ref{CofGL} and the
corollary \ref{CofGQRL},
we deduce the following result.

\begin{theorem}\label{ttt}
 $$(5/5k+2)=-1.$$
\end{theorem}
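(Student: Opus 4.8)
The plan is to read $p=5k+2$ as an odd prime (the hypothesis implicit in the statement, corresponding to $k$ odd, as flagged in the abstract) and then to collapse the Legendre symbol $(5/p)$ to a symbol with a small, fixed modulus that can be evaluated by hand. Before invoking any machinery I would first record that everything in sight is legitimate: since $p=5k+2\equiv 2\pmod 5$, we have $\gcd(5,p)=1$ and $p\neq 5$, so $5$ and $p$ are distinct odd primes and the reciprocity and residue tools of the preliminaries all apply.

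The first real step is to flip the symbol by quadratic reciprocity. Because $5\equiv 1\pmod 4$, Corollary~\ref{CofGQRL} gives directly
$$(5/p)=(p/5).$$
This is the decisive move: it trades the large, variable modulus $p$ for the fixed modulus $5$, after which the dependence on $k$ enters only through the residue of $p$ modulo $5$.

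The second step is to reduce the numerator modulo $5$ and evaluate. Since $p=5k+2\equiv 2\pmod 5$, part~(1) of Property~\ref{lp} yields $(p/5)=(2/5)$. Finally I would compute $(2/5)$ from Corollary~\ref{CofGL}: here the relevant prime is $5$, and $5\equiv 5\pmod 8$, so the corollary gives $(2/5)=-1$. Chaining the three equalities produces $(5/p)=(p/5)=(2/5)=-1$, as claimed.

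I do not expect a genuine obstacle in this argument; it is a direct pipeline through the quoted corollaries. The only point demanding care is hypothesis bookkeeping: one must confirm at the outset that $5k+2$ is an odd prime distinct from $5$ and coprime to $5$ (all of which follow from $5k+2\equiv 2\pmod 5$ together with primality), since the Legendre symbol and the reciprocity and supplementary laws are only defined and valid under exactly those conditions.
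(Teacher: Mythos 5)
Your argument is exactly the paper's: flip $(5/5k+2)$ to $(5k+2/5)$ via reciprocity using $5\equiv 1\pmod 4$, reduce to $(2/5)$ since $5k+2\equiv 2\pmod 5$, and evaluate $(2/5)=-1$ from the supplementary law with $5\equiv 5\pmod 8$. The proposal is correct and coincides with the paper's proof step for step, with somewhat more careful bookkeeping of the hypotheses.
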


\begin{proof}
 Clearly $(5/5k+2)=(5k+2/5)$ since $5 \equiv 1 \pmod {4}$.

Again $(5k+2/5)=(2/5)$ since $5k+2\equiv 2 \pmod {5}$.

Also it is a well known fact that $(2/5)=-1$ since $5\equiv 5 \pmod {8}$.
\end{proof}

For proofs of the above theorems the reader is suggested to see
\cite{dmb} or \cite{mol}.

Let $p$ a prime number such that $p=5k+2$ with $k$ an odd positive
integer. From Property \ref{lp} and Theorem \ref{ttt} we have
$$
\left(5^{\frac{5k+1}{2}}\right)^2\equiv 1\pmod {5k+2}.
$$
From Theorem \ref{t1.1}, we have either $$5^{\frac{5k+1}{2}}\equiv
-1\pmod {5k+2}$$ or $$5^{\frac{5k+1}{2}}\equiv 5k+1\pmod {5k+2}.$$ Moreover, we can observe that
$$
5(2k+1)\equiv 1\pmod {5k+2}.
$$

\begin{theorem}\label{t1.4}
 $$
5^{\frac{5k+1}{2}}\equiv 5k+1\pmod {5k+2}
$$ where $5k+2$ is a prime.
\end{theorem}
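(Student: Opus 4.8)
The plan is to read the value of $5^{(5k+1)/2}$ directly off the Legendre symbol $(5/(5k+2))$, whose value has already been pinned down in Theorem \ref{ttt}. First I would record the two elementary facts that make the statement well-posed: since $k$ is odd, $5k+1$ is even, so the exponent $(5k+1)/2$ is a genuine integer; and since $5k+2\equiv 2\pmod 5$ we have $5\nmid 5k+2$, hence $\gcd(5,5k+2)=1$ and the Legendre symbol $(5/(5k+2))$ is defined.

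The core step is to apply part (2) of Property \ref{lp} with $a=5$ and $p=5k+2$, which gives
$$
5^{(5k+1)/2}=5^{((5k+2)-1)/2}\equiv (5/(5k+2))\pmod{5k+2}.
$$
By Theorem \ref{ttt} we know $(5/(5k+2))=-1$, so combining these,
$$
5^{(5k+1)/2}\equiv -1\equiv 5k+1\pmod{5k+2},
$$
which is exactly the assertion.

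Alternatively, to match the two-case framing set up just before the statement, I would keep the dichotomy coming from Fermat's Little Theorem and Theorem \ref{t1.1}—namely that $5^{(5k+1)/2}$ is congruent to either $1$ or $5k+1$ modulo $5k+2$—and then eliminate the first possibility. If $5^{(5k+1)/2}\equiv 1$, then by Euler's criterion (Theorem \ref{eul}) $5$ would be a quadratic residue of $5k+2$, i.e. $(5/(5k+2))=1$, contradicting Theorem \ref{ttt}. Hence the remaining alternative $5^{(5k+1)/2}\equiv 5k+1$ must hold.

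The main obstacle is, in fact, already behind us: the substantive content—that $5$ is a quadratic nonresidue modulo any prime of the form $5k+2$—was established in Theorem \ref{ttt} through quadratic reciprocity together with the supplementary law for $(2/p)$. Once that is available, the present statement is merely the translation of a Legendre symbol into an explicit power via Euler's criterion, and I expect no difficulty beyond verifying the parity and coprimality conditions noted at the outset.
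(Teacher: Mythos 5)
Your proposal is correct and follows exactly the route the paper indicates: it deduces the congruence from Euler's criterion (Theorem \ref{eul}, or equivalently part (2) of Property \ref{lp}) combined with the evaluation $(5/(5k+2))=-1$ from Theorem \ref{ttt}. The paper leaves these details to the reader, and your write-up simply supplies them.
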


The proof of Theorem \ref{t1.4} follows very easily from Theorems
\ref{eul}, \ref{ttt} and Property \ref{lp}.

\begin{theorem}\label{t16}
Let $r$ be an integer in the set $\{1,2,3,4\}$. Then, we have
$$
(5/5k+r)=\left\{\begin{array}{ccccc}
1 & if & r=1 & or & r=4,
\\
-1 & if & r=2 & or & r=3.
\end{array}
\right.
$$
\end{theorem}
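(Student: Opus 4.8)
Theorem \ref{t16} generalizes Theorem \ref{ttt} from the single residue class $5k+2$ to all four admissible residues $r\in\{1,2,3,4\}$ modulo $5$. The plan is to treat this exactly as Theorem \ref{ttt} was handled, namely by reducing $(5/5k+r)$ to a Legendre symbol with the small modulus $5$ and then reading off the answer from the known values of the quadratic residues mod $5$.

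First I would note that $5\equiv 1\pmod 4$, so by the quadratic reciprocity law in the form of Corollary \ref{CofGQRL} we have $(5/5k+r)=(5k+r/5)$ whenever $5k+r$ is an odd prime (reciprocity applies to distinct odd primes, and since $5\equiv 1\pmod 4$ the sign is $+1$). Next, since $5k+r\equiv r\pmod 5$, property (1) of Property \ref{lp} gives $(5k+r/5)=(r/5)$. Thus the entire problem collapses to computing the four Legendre symbols $(1/5),(2/5),(3/5),(4/5)$.

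These four values follow from Euler's criterion, item (2) of Property \ref{lp}, which gives $(r/5)\equiv r^{2}\pmod 5$. One computes $1^2\equiv 1$, $2^2\equiv 4\equiv -1$, $3^2\equiv 4\equiv -1$, and $4^2\equiv 1\pmod 5$, so that $(1/5)=(4/5)=1$ and $(2/5)=(3/5)=-1$. (Equivalently, one sees directly that $1$ and $4=2^2$ are squares mod $5$ while $2$ and $3$ are not.) This matches the claimed table and in particular recovers $(5/5k+2)=-1$ from Theorem \ref{ttt} as the case $r=2$.

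The only genuine subtlety, and the step I would be most careful about, is that reciprocity requires $5k+r$ to be an \emph{odd prime} distinct from $5$. The cleanest reading of the theorem is therefore that it is understood under the hypothesis that $5k+r$ is prime (consistent with the surrounding discussion, where $p=5k+2$ is always taken prime); the condition $\gcd(5,5k+r)=1$ needed for the Legendre symbol to be defined is automatic since $r\not\equiv 0\pmod 5$, and $5k+r$ is odd for suitable parity of $k$. Granting this, no hard computation remains: the result is an immediate consequence of Corollary \ref{CofGQRL}, Property \ref{lp}(1), and the explicit quadratic residues modulo $5$.
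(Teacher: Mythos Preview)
Your proposal is correct and follows the same route as the paper: reduce $(5/5k+r)$ to $(5k+r/5)$ via quadratic reciprocity (using $5\equiv 1\pmod 4$), then to $(r/5)$ via Property~\ref{lp}(1), and finally evaluate $(r/5)$ for $r=1,2,3,4$. The only cosmetic difference is in this last step: you compute all four symbols uniformly by Euler's criterion $(r/5)\equiv r^{2}\pmod 5$, whereas the paper handles them case by case (trivially for $r=1$, via Corollary~\ref{CofGL} for $r=2$, via reciprocity again for $r=3$, and via Remark~\ref{rmpls} for $r=4$); your version is a bit more streamlined but the substance is identical.
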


\begin{proof}
We have $(5/5k+r)=(5k+r/5)$ since $5\equiv 1\pmod 4$.

Moreover, $(5k+r/5)=(r/5)$ since $5k+r\equiv r\pmod 5$.

Or, we have

If $r=1$, then using Theorem \ref{GQRL}, $(r/5)=1$.

If $r=2$, then using Corollary \ref{CofGL}, $(r/5)=-1$.

If $r=3$, then using Theorem \ref{GQRL}, $(r/5)=-1$.

If $r=4$, then since $(4/5)=(2^2/5)=1$ (see also Remark \ref{rmpls}),
$(r/5)=1$. 

\end{proof}

\begin{theorem}\label{t17}
Let $r$ be an integer in the set $\{1,2,3,4\}$. Then, if $5k+r$ is a
prime, we have
$$
5^{\frac{5k+r-1}{2}}\equiv\left\{\begin{array}{ccccc}
1\pmod {5k+r} & if & r=1 & or & r=4,
\\
-1\pmod {5k+r} & if & r=2 & or & r=3.
\end{array}
\right.
$$
\end{theorem}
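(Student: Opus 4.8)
The plan is to mirror the (omitted) proof of Theorem \ref{t1.4}, which combined an evaluation of a Legendre symbol with Euler's criterion. Since the relevant symbol $(5/5k+r)$ has already been computed in Theorem \ref{t16}, the argument here is almost immediate and reduces to feeding that computation into Property \ref{lp}.

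First I would record the standing hypotheses needed to apply the quadratic-residue machinery. For the exponent $(5k+r-1)/2$ to be an integer, $5k+r$ must be odd, so the assumption that $5k+r$ is prime forces it to be an odd prime. Furthermore, because $r\in\{1,2,3,4\}$ we have $5k+r\equiv r\not\equiv 0\pmod 5$, whence $5\nmid 5k+r$ and $\gcd(5,5k+r)=1$; thus both the Legendre symbol $(5/5k+r)$ and Euler's criterion are defined for $a=5$, $p=5k+r$.

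Next I would apply part (2) of Property \ref{lp} (equivalently Theorem \ref{eul}) to obtain
$$
5^{\frac{5k+r-1}{2}}\equiv (5/5k+r)\pmod{5k+r}.
$$
The right-hand side is either $1$ or $-1$, and since $5k+r>2$ these two values are distinct modulo $5k+r$; hence the congruence determines $5^{(5k+r-1)/2}$ exactly as $+1$ or $-1$.

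Finally I would substitute the values of $(5/5k+r)$ furnished by Theorem \ref{t16}, namely $+1$ when $r=1$ or $r=4$ and $-1$ when $r=2$ or $r=3$, directly into the displayed congruence; this yields the two cases of the statement. I do not expect any genuine obstacle: the only point deserving a word of care is that Property \ref{lp}(2) delivers a congruence rather than an equality, so one must invoke $5k+r>2$ to guarantee that $+1$ and $-1$ are not conflated modulo $5k+r$.
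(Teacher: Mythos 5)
Your proposal is correct and matches the paper exactly: the paper omits the details, stating only that Theorem \ref{t17} ``follows very easily from Theorems \ref{eul}, \ref{t16} and Property \ref{lp},'' which is precisely the chain you spell out (Euler's criterion via Property \ref{lp}(2), then substitution of the Legendre symbol values from Theorem \ref{t16}). Your added remarks on $\gcd(5,5k+r)=1$ and on $+1\not\equiv-1\pmod{5k+r}$ are sensible hygiene but introduce nothing beyond the paper's intended argument.
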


The proof of Theorem \ref{t17} follows very easily from Theorems
\ref{eul}, \ref{t16} and Property \ref{lp}.

We fix the notation $[[1,n]]=\{1,2,\ldots,n\}$ throughout the rest of
the paper. We now have the following properties. 

\begin{remark}
Let $5k+r$ with $r\in[[ 1,4]]$ be a prime
number. Then
$$
k\equiv\left\{\begin{array}{ccccc}
0\pmod 2 & if & r=1 & or & r=3,
\\
1\pmod 2 & if & r=2 & or & r=4,
\end{array}
\right.
$$
or equivalently
$$
k\equiv r+1\pmod 2.
$$
\end{remark}

\begin{property}\label{p1.5}
$$
\binom{5k+1}{2l+1}\equiv 5k+1\pmod {5k+2},
$$
with $l\in[[ 0,\lfloor\frac{5k}{2}\rfloor]]$ and $5k+2$ is a prime.
\end{property}

\begin{proof}
Notice that for $l=0$ the property is obviously true.

We also have
$$
\binom{5k+1}{2l+1}=\frac{(5k+1)5k(5k-1)\ldots (5k-2l+1)}{(2l+1)!}.
$$
Or,
$$
\begin{array}{c}
5k\equiv -2\pmod {5k+2},
\\
5k-1\equiv -3\pmod {5k+2},
\\
\vdots
\\
5k-2l+1\equiv -(2l+1)\pmod {5k+2}.
\end{array}
$$
Multiplying these congruences we get
$$
5k(5k-1)\ldots (5k-2l+1)\equiv (2l+1)!\pmod {5k+2}.
$$
Therefore
$$
(2l+1)!\binom{5k+1}{2l+1}\equiv (5k+1)(2l+1)!\pmod {5k+2}.
$$
Since $(2l+1)!$ and $5k+2$ are relatively prime, we obtain
$$
\binom{5k+1}{2l+1}\equiv 5k+1\pmod {5k+2}.
$$
\end{proof}

We have now the following generalization.

\begin{property}\label{p1.20}
$$
\binom{5k+r-1}{2l+1}\equiv -1\pmod {5k+r},
$$
with $l\in[[ 0,\lfloor\frac{5k+r-2}{2}\rfloor]]$ and $5k+r$ is a prime
such that $r\in[[ 1,4]]$ and $k\equiv r+1\pmod 2$.
\end{property}

The proof of Property \ref{p1.20} is very similar to the proof of Property \ref{p1.5}

\begin{property}\label{p1.6}

$$
\binom{5k}{2l+1}\equiv 5k-2l\equiv -2(l+1)\pmod {5k+2}
$$
with $l\in[[ 0,\lfloor\frac{5k}{2}\rfloor]]$ and $5k+2$ is a prime.
\end{property}

\begin{proof}

Notice that for $l=0$ the property is obviously true.

We have
$$
\binom{5k}{2l+1}=\frac{5k(5k-1)\ldots (5k-2l+1)(5k-2l)}{(2l+1)!}.
$$
Or
$$
\begin{array}{c}
5k\equiv -2\pmod {5k+2},
\\
5k-1\equiv -3\pmod {5k+2},
\\
\vdots
\\
5k-2l+1\equiv -(2l+1)\pmod {5k+2}.
\end{array}
$$
Multiplying these congruences we get
$$
5k(5k-1)\ldots (5k-2l+1)\equiv (2l+1)!\pmod {5k+2}.
$$
Therfore
$$
(2l+1)!\binom{5k}{2l+1}\equiv (2l+1)!(5k-2l)\pmod {5k+2}.
$$
Since $(2l+1)!$ and $5k+2$ are relatively prime, we obtain
$$
\binom{5k+1}{2l+1}\equiv 5k-2l\equiv 5k+2-2-2l\equiv -2(l+1)\pmod {5k+2}.
$$
\end{proof}

We can generalize the above as follows.

\begin{property}\label{p1.22}
$$
\binom{5k+r-2}{2l+1}\equiv -2(l+1)\pmod {5k+r},
$$
with $l\in[[ 0,\lfloor\frac{5k+r-3}{2}\rfloor]]$ and $5k+r$ is a prime
such that $r\in[[ 1,4]]$ and $k\equiv r+1\pmod 2$
\end{property}

The proof of Property \ref{p1.22} is very similar to the proof of Property \ref{p1.6}.

In the memainder of this section we derive or state a few results involving the Fibonacci numbers. The Fibonacci sequence $(F_n)$ is defined by $F_0=0, F_1=1, F_{n+2}=F_n+F_{n+1}$ for $n\geq 0$.

From the definition of the Fibonacci sequences we can establish
the formula for the $nth$ Fibonacci number,
$$
F_n=\frac{\varphi^n-(1-\varphi)^n}{\sqrt{5}},
$$
where $\varphi=\frac{1+\sqrt{5}}{2}$ is the golden ratio.

From binomial theorem, we have for $a\neq 0$ and $n\in\mathbb{N}$,
$$
(a+b)^n-(a-b)^n={\displaystyle\sum^n_{k=0}}
\binom{n}{k}a^{n-k}b^k(1-(-1)^k)=2{\displaystyle
\sum^{\lfloor\frac{n-1}{2}\rfloor}_{l=0}}
\binom{n}{2l+1}a^{n-(2l+1)}b^{2l+1},
$$
\begin{equation}\label{eq9}
(a+b)^n-(a-b)^n=2a^n{\displaystyle
\sum^{\lfloor\frac{n-1}{2}\rfloor}_{l=0}}\binom{n}{2l+1}\left(\frac{b}{a}\right)^{2l+1}.
\end{equation}
We set
$$
\begin{array}{c}
a+b=\varphi=\frac{1+\sqrt{5}}{2},
\\
a-b=1-\varphi=\frac{1-\sqrt{5}}{2}.
\end{array}
$$
So
$$
a=\frac{1}{2},\,\,b=\frac{2\varphi-1}{2}=\frac{\sqrt{5}}{2}.
$$
Thus
$$
\frac{b}{a}=\sqrt{5}.
$$
We get from \eqref{eq9}
$$
\varphi^n-(1-\varphi)^n=\frac{\sqrt{5}}{2^{n-1}}{\displaystyle
\sum^{\lfloor\frac{n-1}{2}\rfloor}_{l=0}}\binom{n}{2l+1}5^l.
$$

Thus we have,

\begin{theorem}\label{t2.1}
$$
F_n=\frac{1}{2^{n-1}}{\displaystyle
\sum^{\lfloor\frac{n-1}{2}\rfloor}_{l=0}}\binom{n}{2l+1}5^l.
$$
\end{theorem}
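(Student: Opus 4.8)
The plan is to read the result directly off Binet's formula together with the binomial expansion already recorded in \eqref{eq9}, so that no induction is required. First I would recall that $F_n=\frac{\varphi^n-(1-\varphi)^n}{\sqrt5}$ and note that the numerator is exactly of the form $(a+b)^n-(a-b)^n$ for the choices $a+b=\varphi$ and $a-b=1-\varphi$ fixed just above the statement, which force $a=\tfrac12$ and $b=\tfrac{\sqrt5}{2}$, and hence $b/a=\sqrt5$.

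Next I would substitute these values into \eqref{eq9}. The prefactor $2a^n$ becomes $2\cdot 2^{-n}=2^{-(n-1)}$, while each summand carries $(b/a)^{2l+1}=(\sqrt5)^{2l+1}$. The one point that deserves a moment's care is the reduction $(\sqrt5)^{2l+1}=(\sqrt5)^{2l}\cdot\sqrt5=5^l\sqrt5$: peeling a single factor of $\sqrt5$ off every odd power is precisely what makes the irrational part common to all terms, so that $\sqrt5$ may be pulled outside the sum. This yields $\varphi^n-(1-\varphi)^n=\frac{\sqrt5}{2^{n-1}}\sum_{l=0}^{\lfloor(n-1)/2\rfloor}\binom{n}{2l+1}5^l$, which is exactly the identity displayed immediately before the theorem.

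Finally I would divide this expression by $\sqrt5$ as dictated by Binet's formula; the lone factor $\sqrt5$ cancels cleanly, leaving $F_n=\frac{1}{2^{n-1}}\sum_{l=0}^{\lfloor(n-1)/2\rfloor}\binom{n}{2l+1}5^l$, the claimed formula.

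There is essentially no hard step here: the whole argument is a direct substitution, and the only things to watch are the bookkeeping of the constant $2a^n=2^{-(n-1)}$ and the clean separation of a single $\sqrt5$ from each odd power of $b/a$. If one instead wished to avoid Binet's formula, the alternative would be induction on $n$ via $F_{n+2}=F_{n+1}+F_n$, matching coefficients with Pascal's rule; that route is self-contained but markedly more tedious, so the cancellation-of-$\sqrt5$ computation above is by far the cleaner path.
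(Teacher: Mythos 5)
Your proposal is correct and follows exactly the paper's own derivation: substituting $a=\tfrac12$, $b=\tfrac{\sqrt5}{2}$ into the binomial identity \eqref{eq9}, extracting the common factor $\sqrt5$ from each odd power, and dividing by $\sqrt5$ via Binet's formula. No differences worth noting.
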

\begin{property}\label{p2.2}
$$
F_{k+2}=1+{\displaystyle\sum^k_{i=1}}F_i.
$$
\end{property}

\begin{theorem}\label{t2.3}

$$
F_{k+l}=F_lF_{k+1}+F_{l-1}F_k
$$
with $k\in\mathbb{N}$ and $l\geq 2$.
\end{theorem}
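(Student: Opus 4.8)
The plan is to prove the identity by induction on $l$, treating $k\in\mathbb{N}$ as an arbitrary fixed parameter. Because the defining recurrence $F_{n+2}=F_n+F_{n+1}$ links three consecutive terms, the inductive step will invoke the hypothesis at two successive values of $l$, so I will need two base cases. It is convenient to note that the stated formula actually makes sense and holds at $l=1$ as well: since $F_1=1$ and $F_0=0$, the right-hand side collapses to $F_1F_{k+1}+F_0F_k=F_{k+1}$, which is correct. I will therefore seed the induction with $l=1$ and $l=2$, even though the theorem only asserts $l\geq 2$.

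For the base cases I would substitute small Fibonacci values directly. At $l=2$, using $F_2=F_1=1$, the right-hand side is $F_2F_{k+1}+F_1F_k=F_{k+1}+F_k$, which equals $F_{k+2}$ by the defining recurrence; the case $l=1$ was checked above. For the inductive step, assume the identity holds at $l-1$ and at $l$ (for every $k$). Applying the Fibonacci recurrence to the left-hand side and the induction hypothesis to each summand gives
\begin{align*}
F_{k+l+1}&=F_{k+l}+F_{k+l-1}\\
&=\bigl(F_lF_{k+1}+F_{l-1}F_k\bigr)+\bigl(F_{l-1}F_{k+1}+F_{l-2}F_k\bigr)\\
&=(F_l+F_{l-1})F_{k+1}+(F_{l-1}+F_{l-2})F_k\\
&=F_{l+1}F_{k+1}+F_lF_k,
\end{align*}
where the final line again uses $F_{n+2}=F_n+F_{n+1}$. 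This is precisely the identity with $l$ replaced by $l+1$, which closes the induction.

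The computation is routine, and there is no genuine obstacle; the only point requiring attention is the bookkeeping of the two base cases, together with the observation that extending the verification to $l=1$ (where $F_0$ enters with coefficient zero) is the clean way to start the recursion. As an alternative that bypasses induction, I could substitute the explicit formula $F_n=\bigl(\varphi^n-(1-\varphi)^n\bigr)/\sqrt5$ into the right-hand side. Writing $\psi=1-\varphi$ and using $\varphi\psi=-1$, $\varphi^2+1=\sqrt5\,\varphi$, and $\psi^2+1=-\sqrt5\,\psi$, the mixed cross terms cancel and the surviving pure powers collapse to $\sqrt5\,(\varphi^{k+l}-\psi^{k+l})$; dividing by $5$ then recovers $F_{k+l}$. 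I expect the inductive argument to be the shorter and more transparent write-up.
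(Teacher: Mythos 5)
Your proof is correct: the two base cases $l=1$ and $l=2$ check out, and the inductive step correctly combines the hypotheses at $l-1$ and $l$ via the recurrence $F_{n+2}=F_n+F_{n+1}$ to obtain the identity at $l+1$; the observation that the formula degenerates gracefully at $l=1$ (since $F_0=0$) is exactly the right way to seed the two-step induction. There is nothing in the paper to compare against — the authors state Theorem \ref{t2.3} without proof and refer the reader to \cite{mol} — so your strong induction on $l$ (or equally the Binet-formula computation you sketch, whose cross terms cancel because $\varphi(1-\varphi)=-1$) supplies the standard argument the paper omits.
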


The proofs of the above two results can be found in \cite{mol}.

\begin{property}\label{p0.1}
Let $m$ be a positive integer which is greater than $2$. Then, we have
$$
F_{3m+2}=4F_{3m-1}+F_{3m-4}.
$$
\end{property}
The above can be generalized to 

\begin{property}
Let $m$ be a positive integer which is greater than $2$. Then, we have
$$
F_{3m+2}=4{\displaystyle\sum^m_{i=2}}F_{3i-1}.
$$
\end{property}

The above three results can be proved in a straighforward way using the recurrence relation of Fibonacci numbers.

We now state below a few congruence relationships of Fibonacci numbers.

\begin{property}
$F_n\equiv 0\pmod 2$ if and only if $n\equiv 0\pmod 3$.

\end{property}

\begin{corollary}\label{c32}
If $p=5k+2$ is a prime which is strictly greater than $5$
($k\in\mathbb{N}$ and $k$ odd), then $F_p=F_{5k+2}$ is an odd number.
\end{corollary}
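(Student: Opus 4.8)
The plan is to reduce this statement immediately to the parity property established just above, namely that $F_n\equiv 0\pmod 2$ if and only if $n\equiv 0\pmod 3$. Equivalently, $F_n$ is odd precisely when $3\nmid n$. So in order to conclude that $F_p=F_{5k+2}$ is odd, it suffices to verify that $3$ does not divide $5k+2$.

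First I would invoke the hypothesis that $p=5k+2$ is a prime strictly greater than $5$, and hence in particular greater than $3$. A prime larger than $3$ is not divisible by $3$, so $3\nmid p$, i.e. $3\nmid (5k+2)$. Applying the parity property with $n=5k+2$ then gives directly that $F_{5k+2}$ is odd, which is the assertion.

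There is essentially no hard step here: the corollary is an immediate consequence of the preceding property together with the primality of $p$. If one prefers an arithmetic check in place of appealing to primality alone, I would note that $5k+2\equiv 2(k+1)\pmod 3$, which is congruent to $0$ only when $k\equiv 2\pmod 3$; in that case $3\mid 5k+2$, and since $5k+2>5$ this contradicts the assumption that $5k+2$ is prime. Either route confirms $3\nmid(5k+2)$, so the only potential obstacle — ruling out $3\mid p$ — is dispatched by primality. It is worth remarking that the extra hypothesis that $k$ is odd is not actually needed for this conclusion; it guarantees that $5k+2$ is odd, but the oddness of $F_{5k+2}$ follows purely from $3\nmid(5k+2)$.
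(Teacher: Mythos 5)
Your proof is correct and follows the paper's own argument exactly: the paper likewise disposes of the corollary by remarking that $p$, being a prime greater than $5$, is not divisible by $3$, and then invoking the preceding parity property. Your additional observations (the explicit congruence check and the remark that the oddness of $k$ is not needed) are accurate but not essential.
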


In order to prove this assertion, it suffices to remark that $p$ is
not divisible by $3$.

\begin{property}\label{p33}
$$
F_{5k}\equiv 0\pmod 5
$$
with $k\in\mathbb{N}$.
\end{property}

\begin{property}\label{p34}

$$
F_n\geq n
$$
with $n\in \mathbb{N}$ and $n\geq 5$.
\end{property}

The proofs of the above results follows from the principle of mathematical induction and Theorem \ref{t2.3} and Proposition \ref{p2.2}. For brevity, we omit them here.

\section{{Congruences of Fibonacci numbers modulo a prime}}

In this section, we give some new congruence relations involving Fibonacci numbers modulo a prime. The study in this section and some parts of the 
subsequent sections are motivated by some similar results obtained by Bicknell-Johnson in \cite{bj} and by Hoggatt and Bicknell-Johnson in \cite{hbj}.

Let $p=5k+2$ be a prime number with $k$ a non-zero positive integer
which is odd. Notice that in this case, $5k\pm 1$ is an even number and so
$$
\left\lfloor\frac{5k\pm 1}{2}\right\rfloor=\frac{5k\pm 1}{2}.
$$

We now have the following properties.

\begin{property}\label{p4.1}
$$
F_{5k+2}\equiv 5k+1\pmod {5k+2}
$$
with $k\in\mathbb{N}$ and $k$ odd such that $5k+2$ is prime.
\end{property}

This result is also stated in \cite{hbj}, but we give a different proof of the result below.

\begin{proof}
From Theorems \ref{t1.4} and \ref{t2.1} we have
$$
2^{5k+1}F_{5k+2}={\displaystyle\sum^{\frac{5k+1}{2}}_{l=0}}
\binom{5k+2}{2l+1}5^l\equiv 5^{\frac{5k+1}{2}}\equiv 5k+1\pmod {5k+2}
$$
where we used the fact that $\binom{5k+2}{2l+1}$ is divisible by
$5k+2$ for $l=0,1,\ldots,\frac{5k-1}{2}$.

From Fermat's Little Theorem, we have
$$
2^{5k+1}\equiv 1\pmod {5k+2}.
$$

We get $F_{5k+2}\equiv 5k+1\pmod {5k+2}$.
\end{proof}

\begin{property}\label{p4.2}
$$
F_{5k+1}\equiv 1\pmod {5k+2}
$$
with $k\in\mathbb{N}$ and $k$ odd such that $5k+2$ is prime.
\end{property}

\begin{proof}
From Theorem \ref{t2.1} and Property \ref{p1.5} we have
$$
2^{5k}F_{5k+1}={\displaystyle\sum^{\lfloor\frac{5k}{2}\rfloor}_{l=0}}
\binom{5k+1}{2l+1}5^l\equiv (5k+1){\displaystyle\sum^{\lfloor\frac{5k}{2}\rfloor}_{l=0}}
5^l\pmod {5k+2}.
$$
We have
$$
{\displaystyle\sum^{\lfloor\frac{5k}{2}\rfloor}_{l=0}} 5^l
=\frac{5^{\lfloor\frac{5k}{2}\rfloor+1}-1}{4}.
$$
We get from the above
$$
2^{5k+2}F_{5k+1}\equiv (5k+1)\left\{
5^{\lfloor\frac{5k}{2}\rfloor+1}-1
\right\} \pmod {5k+2}.
$$
Since $k$ is an odd positive integer, there exists a positive integer
$m$ such that $k=2m+1$. It follows that
$$
\left\lfloor\frac{5k}{2}\right\rfloor=5m+2.
$$ 

Notice that $5k+2=10m+7$ is prime, implies that $k\neq
5$ and $k\neq 11$ or equivalently $m\neq 2$ and $m\neq 5$. Other
restrictions on $k$ and $m$ can be given.

From Theorem \ref{t1.4} we have
$$
5^{5m+3}\equiv 10m+6\pmod {10m+7}.
$$
We can rewrite 
$\sum^{\lfloor\frac{5k}{2}\rfloor}_{l=0} 5^l=\frac{5^{\lfloor\frac{5k}{2}\rfloor+1}-1}{4}$
as
$$
{\displaystyle\sum^{\lfloor\frac{5k}{2}\rfloor}_{l=0}} 5^l=\frac{5^{5m+3}-1}{4}.
$$
Moreover, we have
$$
(5k+1)\left\{
5^{\lfloor\frac{5k}{2}\rfloor+1}-1
\right\}=(10m+6)\left\{
5^{5m+3}-1
\right\}.
$$
Or,
$$
(10m+6)\left\{
5^{5m+3}-1
\right\}\equiv 5^{5m+3}\left\{
5^{5m+3}-1
\right\}\equiv 5^{10m+6}-10m-6\pmod {10m+7}.
$$
We have
$$
(10m+6)\left\{
5^{5m+3}-1
\right\}\equiv 5^{10m+6}+1\pmod {10m+7}.
$$
From Fermat's Little Theorem, we have $5^{10m+6}\equiv 1\pmod
{10m+7}$. Therefore
$$
(10m+6)\left\{
5^{5m+3}-1
\right\}\equiv 2\pmod {10m+7},
$$
or equivalently
$$
(5k+1)\left\{
5^{\lfloor\frac{5k}{2}\rfloor+1}-1
\right\}\equiv 2\pmod {5k+2}.
$$

It follows that
$$
2^{5k+2}F_{5k+1}\equiv 2\pmod {5k+2}.
$$
Since $2$ and $5k+2$ are relatively prime, so
$$
2^{5k+1}F_{5k+1}\equiv 1\pmod {5k+2}.
$$
From Fermat's Little Theorem, we have $2^{5k+1}\equiv 1\pmod
{5k+2}$. Therefore
$$
F_{5k+1}\equiv 1\pmod {5k+2}.
$$
\end{proof}

\begin{property}\label{p4.3}

$$
F_{5k}\equiv 5k\pmod {5k+2}
$$
with $k\in\mathbb{N}$ and $k$ odd such that $5k+2$ is prime.
\end{property}

\begin{proof}
From Theorem \ref{t2.1} and Property \ref{p1.6} we have
$$
2^{5k-1}F_{5k}={\displaystyle\sum^{\frac{5k-1}{2}}_{l=0}}
\binom{5k}{2l+1}5^l\equiv {\displaystyle\sum^{\frac{5k-1}{2}}_{l=0}}
(5k-2l)5^l\pmod {5k+2}.
$$
Also
$$
{\displaystyle\sum^{\frac{5k-1}{2}}_{l=0}}(5k-2l)5^l
=\frac{5\left[3\times 5^{\frac{5k-1}{2}}-(2k+1)\right]}{8},
$$ where we have used the fact that for $x\neq 1$ and $n\in \mathbb{N}$ we have $$\sum_{l=0}^{n}lx^l=\frac{(n+1)(x-1)x^{n+1}-x^{n+2}+x}{(x-1)^2}.$$
So
$$
2^{5k+2}F_{5k}\equiv
5\left(3\times 5^{\frac{5k-1}{2}}-(2k+1)\right)\pmod {5k+2}.
$$
Moreover since $k=2m+1$, we have
$$
3\times 5^{\frac{5k-1}{2}}-(2k+1)=3\times 5^{5m+2}-(4m+3).
$$
Since $5^{5m+3}\equiv 10m+6\pmod {10m+7}$, we have
$$
3\times 5^{5m+3}\equiv 30m+18 \equiv 40m+25 \pmod {10m+7}.
$$

Consequently
$$
3\times 5^{5m+2}\equiv 8m+5\pmod {10m+7},
$$
which implies
$$
3\times 5^{5m+2}-(4m+3)\equiv 4m+2\pmod {10m+7},
$$
or equivalently for $k=2m+1$
$$
3\times 5^{\frac{5k-1}{2}}-(2k+1)\equiv 2k\pmod {5k+2},
$$
$$
2^{5k+2}F_{5k}\equiv 2\times 5k\pmod {5k+2}.
$$
Since $2$ and $5k+2$ are relatively prime, so
$$
2^{5k+1}F_{5k}\equiv 5k\pmod {5k+2}.
$$
From Fermat's Little Theorem, we have $2^{5k+1}\equiv 1\pmod
{5k+2}$. Therefore
$$
F_{5k}\equiv 5k\pmod {5k+2}.
$$
\end{proof}

\begin{property}\label{p0.3}
Let $5k+2$ be a prime with $k$ odd and let $m$ be a positive
integer which is greater than $2$. Then, we have
$$
F_{3m}\equiv 2\left(3^{m-1}
+{\displaystyle\sum^{m-1}_{i=1}}3^{m-1-i}F_{3i-1}\right)\pmod {5k+2}.
$$
\end{property}

\begin{proof}

We prove the result by induction.

We know that $F_6=8$. Or, $2(3+F_2)=2(3+1)=2\times 4=8$. So
$$
F_6=F_{3\times 2}=2(3+F_2)\equiv 2(3+F_2)\pmod {5k+2}.
$$
Let us assume that 
$F_{3m}\equiv 2\left(3^{m-1}
+{\displaystyle\sum^{m-1}_{i=1}}3^{m-1-i}F_{3i-1}\right)\pmod {5k+2}$
with $m\geq 2$.

For $m$ a positive integer, we have by Theorem \ref{t2.3}

\begin{align}
F_{3(m+1)} &= F_{3m+3}=F_3F_{3m+1}+F_2F_{3m}=2F_{3m+1}+F_{3m} \nonumber \\
 &= 2(F_{3m}+F_{3m-1})+F_{3m}=2F_{3m-1}+3F_{3m}. \nonumber
\end{align}

From the assumption above, we get

\begin{align}
F_{3(m+1)} &\equiv 2F_{3m-1}+2\left(3^m
+{\displaystyle\sum^{m-1}_{i=1}}3^{m-i}F_{3i-1}\right)\pmod {5k+2}\nonumber \\
  &\equiv 2\left(3^m
+{\displaystyle\sum^{m}_{i=1}}3^{m-i}F_{3i-1}\right)\pmod {5k+2}.\nonumber 
\end{align}

Thus the proof is complete by induction.

\end{proof}

\begin{theorem}\label{t0.4}
Let $5k+2$ be a prime with $k$ an odd integer and let $m$ be a
positive integer which is greater than $2$. Then
$$
F_{5mk}\equiv 5k\left(3^{m-1}
+{\displaystyle\sum^{m-1}_{i=1}}3^{m-1-i}F_{3i-1}\right)\pmod {5k+2}
$$
and
$$
F_{5mk+1}\equiv F_{3m-1}\pmod {5k+2}.
$$
\end{theorem}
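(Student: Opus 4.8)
The plan is to reduce both congruences to a single coupled two-term recurrence modulo $p:=5k+2$ and then to run one simultaneous induction on $m$. Writing $a_m:=F_{5mk}$ and $b_m:=F_{5mk+1}$, the point is that the pair $(a_m,b_m)$ is obtained from $(a_{m-1},b_{m-1})$ by ``adding $5k$ to the index,'' so Theorem~\ref{t2.3} converts the shift $m\mapsto m+1$ into multiplication by a fixed block whose entries are $F_{5k+1}$, $F_{5k}$, and $F_{5k-1}$ reduced mod $p$. Those three residues are already available: Property~\ref{p4.2} gives $F_{5k+1}\equiv1$, Property~\ref{p4.3} gives $F_{5k}\equiv5k\equiv-2$, and the defining recurrence then forces $F_{5k-1}=F_{5k+1}-F_{5k}\equiv3\pmod p$.

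Concretely, I would apply Theorem~\ref{t2.3} to the index $5mk$ with $l=5k$ and with $l=5k+1$ (both $\ge 2$), obtaining
$$
a_{m+1}=F_{5k}\,b_m+F_{5k-1}\,a_m\equiv 3a_m-2b_m,\qquad b_{m+1}=F_{5k+1}\,b_m+F_{5k}\,a_m\equiv b_m-2a_m\pmod p.
$$
These are the only structural inputs; everything after this is bookkeeping with the two closed forms.

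For the induction, let $S_m:=3^{m-1}+\sum_{i=1}^{m-1}3^{m-1-i}F_{3i-1}$ denote the common bracketed quantity, which satisfies the elementary identity $S_{m+1}=3S_m+F_{3m-1}$. The base case $m=1$ is immediate: $a_1\equiv 5k=5k\,S_1$ by Property~\ref{p4.3}, and $b_1\equiv 1=F_2=F_{3\cdot1-1}$ by Property~\ref{p4.2}. Assuming $a_m\equiv 5k\,S_m$ and $b_m\equiv F_{3m-1}$, the first recurrence gives $a_{m+1}\equiv 15k\,S_m-2F_{3m-1}$; since $5k\equiv-2\pmod p$ one replaces $-2F_{3m-1}$ by $5k\,F_{3m-1}$, so $a_{m+1}\equiv 5k(3S_m+F_{3m-1})=5k\,S_{m+1}$. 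The second recurrence gives $b_{m+1}\equiv F_{3m-1}-10k\,S_m\equiv F_{3m-1}+4S_m$ (using $-10k\equiv4$); here I would invoke Property~\ref{p0.3} in the form $F_{3m}\equiv 2S_m$ to rewrite $4S_m\equiv2F_{3m}$, whence $b_{m+1}\equiv 2F_{3m}+F_{3m-1}=F_{3m+2}=F_{3(m+1)-1}$, the last step being $F_{3m+2}=F_{3m+1}+F_{3m}=2F_{3m}+F_{3m-1}$. This closes both congruences at once.

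The main obstacle is exactly the coupling: neither congruence can be proved in isolation, because the update for $a_m$ feeds on $b_m$ and conversely, so the two must be carried through a single joint induction. The delicate point is that the $b$-recurrence only collapses to $F_{3(m+1)-1}$ after the term $4S_m$ is converted back into $2F_{3m}$; that is, Property~\ref{p0.3} is precisely the bridge linking the ``$5k\,S_m$'' shape of $a_m$ to the pure-Fibonacci shape of $b_m$. I would also note that Property~\ref{p0.3} is applied here at index $m$ (including $m=1,2$), which is legitimate because its inductive proof in fact establishes $F_{3m}\equiv2S_m$ for all $m\ge1$, so the hypothesis $m>2$ in its statement is not needed for this use.
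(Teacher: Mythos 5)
Your proposal is correct and follows essentially the same route as the paper: a joint induction on $m$ in which Theorem \ref{t2.3} supplies the transition $F_{5(m+1)k}=F_{5k}F_{5mk+1}+F_{5k-1}F_{5mk}$ and $F_{5(m+1)k+1}=F_{5k+1}F_{5mk+1}+F_{5k}F_{5mk}$, the residues $F_{5k+1}\equiv 1$, $F_{5k}\equiv 5k\equiv -2$, $F_{5k-1}\equiv 3$ come from Properties \ref{p4.2} and \ref{p4.3}, and Property \ref{p0.3} converts $4S_m$ into $2F_{3m}$ to close the second congruence. The only cosmetic differences are your earlier reduction $5k\equiv -2$ (the paper instead carries $25k^2\equiv 4$) and your base case at $m=1$ rather than $m=2$, which you correctly justify by noting that Property \ref{p0.3} in fact holds from $m=1$ on.
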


\begin{proof}

We prove the theorem by induction.

We have, using Theorem \ref{t2.3}
$$
F_{10k}=F_{5k+5k}=F_{5k}F_{5k+1}+F_{5k-1}F_{5k}=F_{5k}(F_{5k+1}+F_{5k-1}).
$$
Using Properties \ref{p4.1}, \ref{p4.2} and \ref{p4.3} we can see that
$$
F_{10k}\equiv 20k\pmod {5k+2}.
$$
Also
$$
5k\left(3
+{\displaystyle\sum^{1}_{i=1}}3^{1-i}F_{3i-1}\right)
=5k(3+F_2)=20k\equiv 20k\pmod {5k+2}.
$$
So
$$
F_{10k}=F_{5\times 2k}\equiv 5k\left(3
+{\displaystyle\sum^{1}_{i=1}}3^{1-i}F_{3i-1}\right)\pmod {5k+2}.
$$
Moreover, we have from Theorem \ref{t2.3}, Property \ref{p4.2} and Property \ref{p4.3},
$$
F_{10k+1}=F_{5k+5k+1}=F^2_{5k+1}+F^2_{5k}\equiv 1+25k^2\pmod {5k+2}.
$$
We have
$$
(5k+2)^2=25k^2+20k+4
\equiv  25k^2+10k\equiv 0\pmod {5k+2}.
$$
So
$$
25k^2\equiv -10k\equiv 2(5k+2)-10k\equiv 4\pmod {5k+2}.
$$
Therefore
$$
F_{10k+1}\equiv F_5\equiv 5\pmod {5k+2},
$$
or equivalently
$$
F_{5\times 2k+1}\equiv F_{3\times 2-1}\equiv 5\pmod {5k+2}.
$$
Let us assume that $$F_{5mk}\equiv 5k\left(3^{m-1}
+{\displaystyle\sum^{m-1}_{i=1}}3^{m-1-i}F_{3i-1}\right)\pmod {5k+2}$$ and
$$F_{5mk+1}\equiv F_{3m-1}\pmod {5k+2}.$$ Then, we have
$$
F_{5(m+1)k}=F_{5mk+5k}=F_{5k}F_{5mk+1}+F_{5k-1}F_{5mk}.
$$
Using Property \ref{p4.3} and $F_{5k-1}\equiv 3\pmod
{5k+2}$, from the assumptions above, we have
$$
F_{5(m+1)k}\equiv 5kF_{3m-1}+3\times 5k\left(3^{m-1}
+{\displaystyle\sum^{m-1}_{i=1}}3^{m-1-i}F_{3i-1}\right)\pmod {5k+2}.
$$
It gives
$$
F_{5(m+1)k}\equiv 5k\left(3^{m}
+{\displaystyle\sum^{m}_{i=1}}3^{m-i}F_{3i-1}\right)\pmod {5k+2}.
$$
Moreover, we have
$$
F_{5(m+1)k+1}=F_{5mk+5k+1}=F_{5k+1}F_{5mk+1}+F_{5k}F_{5mk}.
$$
Using Properties \ref{p4.2} and \ref{p4.3} and the assumptions above, and since $25k^2\equiv 4\pmod {5k+2}$, we have
\begin{align}
F_{5(m+1)k+1} &\equiv F_{3m-1}+25k^2\left(3^{m-1}
+{\displaystyle\sum^{m-1}_{i=1}}3^{m-i-1}F_{3i-1}\right)\pmod {5k+2}\nonumber \\
 &\equiv F_{3m-1}+4\left(3^{m-1}
+{\displaystyle\sum^{m-1}_{i=1}}3^{m-i-1}F_{3i-1}\right)\pmod {5k+2}\nonumber \\
 &\equiv F_{3m-1}+2F_{3m}\pmod {5k+2}.\nonumber
\end{align}

Or,
$$
F_{3m-1}+2F_{3m}=F_{3m-1}+F_{3m}+F_{3m}=F_{3m+1}+F_{3m}=F_{3m+2}.
$$
Therefore
$$
F_{5(m+1)k+1}\equiv F_{3m+2}\pmod {5k+2},
$$
or equivalently
$$
F_{5(m+1)k+1}\equiv F_{3(m+1)-1}\pmod {5k+2}.
$$
This completes the proof.
\end{proof}

\begin{corollary}Let $5k+2$ be a prime with $k$ an odd integer and $m$ be a
positive integer which is greater than $2$. Then
$$
F_{5mk+2}\equiv F_{3m-1}+5k\left(3^{m-1}
+{\displaystyle\sum^{m-1}_{i=1}}3^{m-1-i}F_{3i-1}\right)\pmod {5k+2},
$$
$$
F_{5mk+3}\equiv 2F_{3m-1}+5k\left(3^{m-1}
+{\displaystyle\sum^{m-1}_{i=1}}3^{m-1-i}F_{3i-1}\right)\pmod {5k+2},
$$
and
$$
F_{5mk+4}\equiv 3F_{3m-1}+10k\left(3^{m-1}
+{\displaystyle\sum^{m-1}_{i=1}}3^{m-1-i}F_{3i-1}\right)\pmod {5k+2}.
$$
\end{corollary}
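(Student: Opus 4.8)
The plan is to derive all three congruences directly from Theorem \ref{t0.4} together with the defining recurrence $F_{n+2}=F_n+F_{n+1}$; no fresh induction is needed, because Theorem \ref{t0.4} already supplies two \emph{consecutive} Fibonacci values, and the three indices $5mk+2,\,5mk+3,\,5mk+4$ lie immediately above them. Concretely, for brevity write
$$
\Sigma=3^{m-1}+{\displaystyle\sum^{m-1}_{i=1}}3^{m-1-i}F_{3i-1},
$$
so that Theorem \ref{t0.4} reads $F_{5mk}\equiv 5k\Sigma\pmod{5k+2}$ and $F_{5mk+1}\equiv F_{3m-1}\pmod{5k+2}$. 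These two residues are the only inputs the argument will use.

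First I would compute $F_{5mk+2}=F_{5mk+1}+F_{5mk}\equiv F_{3m-1}+5k\Sigma\pmod{5k+2}$, which is exactly the first asserted congruence. Then $F_{5mk+3}=F_{5mk+2}+F_{5mk+1}\equiv(F_{3m-1}+5k\Sigma)+F_{3m-1}=2F_{3m-1}+5k\Sigma\pmod{5k+2}$, giving the second. Finally $F_{5mk+4}=F_{5mk+3}+F_{5mk+2}\equiv(2F_{3m-1}+5k\Sigma)+(F_{3m-1}+5k\Sigma)=3F_{3m-1}+10k\Sigma\pmod{5k+2}$, which is the third; the coefficient of $\Sigma$ doubles from $5k$ to $10k$ precisely because two copies of $F_{5mk}$ get folded in at this last step.

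The argument is thus a threefold unfolding of the Fibonacci recurrence seeded by Theorem \ref{t0.4}, and I do not expect any genuine obstacle. The only point requiring care is the routine bookkeeping of the coefficient multiplying $\Sigma$ as one climbs the ladder of indices (it remains $5k$ through the first two steps and becomes $10k$ at the last); no further appeal to the primality of $5k+2$ or the parity of $k$ is needed beyond what is already encapsulated in Theorem \ref{t0.4}.
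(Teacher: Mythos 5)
Your proposal is correct and is exactly the intended argument: the paper states this as an immediate corollary of Theorem \ref{t0.4} without writing out a proof, and the computation is precisely the threefold application of the recurrence $F_{n+2}=F_n+F_{n+1}$ starting from the two residues $F_{5mk}\equiv 5k\Sigma$ and $F_{5mk+1}\equiv F_{3m-1}$ that you carry out. Your coefficient bookkeeping (including the jump from $5k$ to $10k$ in the last line) checks out.
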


\begin{theorem}\label{t0.6}
Let $5k+2$ be a prime with $k$ an odd positive integer, $m$ be a
positive integer which is greater than $2$ and $r\in\mathbb{N}$. Then
$$
F_{m(5k+r)}\equiv F_{mr}F_{3m-1}+5kF_{mr-1}\left(3^{m-1}
+{\displaystyle\sum^{m-1}_{i=1}}3^{m-1-i}F_{3i-1}\right)\pmod {5k+2}.
$$
\end{theorem}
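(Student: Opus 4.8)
The plan is to reduce the statement to a single application of the Fibonacci addition formula (Theorem \ref{t2.3}) combined with the two congruences already established in Theorem \ref{t0.4}. The starting observation is that the index factors as $m(5k+r)=5mk+mr$, which separates it into the part $5mk$ that Theorem \ref{t0.4} controls modulo $5k+2$ and the ordinary part $mr$. Applying Theorem \ref{t2.3} with the substitution $k\mapsto 5mk$ and $l\mapsto mr$ yields
$$
F_{m(5k+r)}=F_{5mk+mr}=F_{mr}\,F_{5mk+1}+F_{mr-1}\,F_{5mk}.
$$

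Before invoking the formula I would check its hypothesis, namely that the parameter in the role of $l$ is at least $2$. Since $m>2$ forces $m\geq 3$ and since $r\geq 1$, we have $mr\geq 3\geq 2$, so Theorem \ref{t2.3} applies verbatim; the degenerate case $r=0$ simply returns $F_{5mk}$ and is already covered by Theorem \ref{t0.4}. The next step is to insert the two congruences
$$
F_{5mk+1}\equiv F_{3m-1}\pmod{5k+2}
\qquad\text{and}\qquad
F_{5mk}\equiv 5k\left(3^{m-1}+{\displaystyle\sum^{m-1}_{i=1}}3^{m-1-i}F_{3i-1}\right)\pmod{5k+2}
$$
into the displayed identity. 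Because $F_{mr}$ and $F_{mr-1}$ are ordinary integers, they pass through the congruence unchanged, and one obtains at once
$$
F_{m(5k+r)}\equiv F_{mr}F_{3m-1}+5kF_{mr-1}\left(3^{m-1}+{\displaystyle\sum^{m-1}_{i=1}}3^{m-1-i}F_{3i-1}\right)\pmod{5k+2},
$$
which is exactly the asserted congruence.

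I do not anticipate a genuine obstacle here: the entire content of the theorem is the interplay between the exact addition formula and the modular information supplied by Theorem \ref{t0.4}. The only point requiring care is the bookkeeping of which index is substituted into which congruence—one must keep $5mk$ and $5mk+1$ (rather than $mr$) as the arguments governed by Theorem \ref{t0.4}, and simultaneously confirm the index bound $mr\geq 2$ so that Theorem \ref{t2.3} is legitimately applied.
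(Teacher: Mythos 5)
Your proof is correct and follows exactly the paper's own argument: decompose $F_{m(5k+r)}=F_{5mk+mr}=F_{mr}F_{5mk+1}+F_{mr-1}F_{5mk}$ via Theorem \ref{t2.3} and substitute the two congruences of Theorem \ref{t0.4}. Your extra check that $mr\geq 2$ (so Theorem \ref{t2.3} applies as stated) is a small point of care the paper omits, but the route is the same.
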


\begin{proof}
For $m,r$ two non-zero positive integers, we have by Theorem \ref{t2.3}
$$
F_{m(5k+r)}=F_{5mk+mr}=F_{mr}F_{5mk+1}+F_{mr-1}F_{5mk}.
$$
From Theorem \ref{t0.4}, we have for $m\geq 2$ and $r\in\mathbb{N}$
$$
F_{m(5k+r)}\equiv F_{mr}F_{3m-1}+5kF_{mr-1}\left(3^{m-1}
+{\displaystyle\sum^{m-1}_{i=1}}3^{m-1-i}F_{3i-1}\right)\pmod {5k+2}.
$$
\end{proof}

\begin{remark}
In particular, if $r=3$, we know that 
$$F_{m(5k+3)}\equiv 0\pmod {5k+2}.$$ 

This congruence can be deduced from Property \ref{p0.3} and Theorem \ref{t0.6}. Indeed, using Theorem \ref{t0.6}, we have

\begin{align}
F_{m(5k+3)} &\equiv F_{3m}F_{3m-1} +5kF_{3m-1}\left(3^{m-1}
+{\displaystyle\sum^{m-1}_{i=1}}3^{m-1-i}F_{3i-1}\right)\pmod {5k+2}\nonumber \\
 &\equiv F_{3m}F_{3m-1}+5kF_{3m-1}\left(3^{m-1}
+{\displaystyle\sum^{m-1}_{i=1}}3^{m-1-i}F_{3i-1}\right)\nonumber \\
& \quad -(5k+2)F_{3m-1}\left(3^{m-1}
+{\displaystyle\sum^{m-1}_{i=1}}3^{m-1-i}F_{3i-1}\right)\pmod {5k+2}\nonumber \\
 &\equiv F_{3m}F_{3m-1}-2F_{3m-1}\left(3^{m-1}
+{\displaystyle\sum^{m-1}_{i=1}}3^{m-1-i}F_{3i-1}\right)\pmod {5k+2}.\nonumber
\end{align}

Using Property \ref{p0.3} we get
$$
F_{m(5k+3)}\equiv F_{3m}F_{3m-1}-F_{3m-1}F_{3m}\equiv 0\pmod {5k+2}.
$$
\end{remark}

\begin{corollary}\label{c0.8}
Let $5k+2$ be a prime with $k$ an odd positive integer
and $m,r\in\mathbb{N}$. Then
$$
F_{m(5k+r)}\equiv F_{mr}F_{3m-1}-F_{mr-1}F_{3m}\pmod {5k+2}.
$$
\end{corollary}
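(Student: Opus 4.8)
The plan is to obtain this corollary as an immediate consequence of Theorem \ref{t0.6} together with Property \ref{p0.3}, mirroring the manipulation already performed for the special case $r=3$ in the remark preceding the statement. Theorem \ref{t0.6} provides
$$F_{m(5k+r)}\equiv F_{mr}F_{3m-1}+5kF_{mr-1}\left(3^{m-1}+{\displaystyle\sum^{m-1}_{i=1}}3^{m-1-i}F_{3i-1}\right)\pmod{5k+2},$$
so the only work is to rewrite the second summand on the right in terms of $F_{3m}$.

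The key observation I would exploit is that $5k\equiv-2\pmod{5k+2}$, which holds because $5k+2\equiv0$. Replacing the factor $5k$ accordingly gives
$$5kF_{mr-1}\left(3^{m-1}+{\displaystyle\sum^{m-1}_{i=1}}3^{m-1-i}F_{3i-1}\right)\equiv-2F_{mr-1}\left(3^{m-1}+{\displaystyle\sum^{m-1}_{i=1}}3^{m-1-i}F_{3i-1}\right)\pmod{5k+2}.$$
I would then invoke Property \ref{p0.3}, namely $F_{3m}\equiv2\left(3^{m-1}+\sum_{i=1}^{m-1}3^{m-1-i}F_{3i-1}\right)\pmod{5k+2}$, to collapse the quantity $2\left(3^{m-1}+\sum_{i=1}^{m-1}3^{m-1-i}F_{3i-1}\right)$ into $F_{3m}$. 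Substituting back, the second summand becomes $-F_{mr-1}F_{3m}$, and together with the first summand of Theorem \ref{t0.6} this yields
$$F_{m(5k+r)}\equiv F_{mr}F_{3m-1}-F_{mr-1}F_{3m}\pmod{5k+2},$$
which is exactly the assertion.

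There is no genuine obstacle in the argument itself, since it is a single substitution; the one point deserving attention is the range of $m$. Both Theorem \ref{t0.6} and Property \ref{p0.3} are stated for $m>2$, whereas the corollary is phrased for all $m\in\mathbb{N}$, so I would verify the boundary cases $m=1$ and $m=2$ directly. For instance, for $m=1$ the claimed congruence reduces to $F_{5k+r}\equiv F_r-2F_{r-1}\pmod{5k+2}$, which is consistent with Properties \ref{p4.1} and \ref{p4.2} in the cases $r=1,2$. Since $5k+2\geq7$ is an odd prime, $2$ is invertible modulo $5k+2$ and no division difficulties arise, so the substitution is valid throughout.
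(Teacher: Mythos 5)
Your argument is essentially the paper's own proof: for $m\geq 2$ it performs exactly the same manipulation, replacing $5k$ by $-2$ modulo $5k+2$ in the congruence of Theorem \ref{t0.6} and then invoking Property \ref{p0.3} to identify $2\left(3^{m-1}+\sum_{i=1}^{m-1}3^{m-1-i}F_{3i-1}\right)$ with $F_{3m}$. The one place you stop short of the paper is the case $m=1$: you correctly reduce it to $F_{5k+r}\equiv F_r-2F_{r-1}\pmod{5k+2}$ but only spot-check $r=1,2$, whereas the statement requires this for all $r\in\mathbb{N}$ --- the paper supplies it as Lemma \ref{l0.9}, proved by a short induction on $r$ starting from exactly the base cases you verified, so you should close that loop explicitly.
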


\begin{lemma}\label{l0.9}
Let $5k+2$ be a prime with $k$ an odd positive integer
and $r\in\mathbb{N}$. Then
$$
F_{5k+r}\equiv F_r-2F_{r-1}\pmod {5k+2}.
$$
\end{lemma}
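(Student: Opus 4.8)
The plan is to reduce $F_{5k+r}$ to a combination of $F_r$ and $F_{r-1}$ by splitting the index $5k+r$ at $5k$ via the addition formula, and then to substitute the two residues for $F_{5k}$ and $F_{5k+1}$ already established in Section 3. The observation driving everything is that modulo $5k+2$ the two ``boundary'' Fibonacci numbers $F_{5k}$ and $F_{5k+1}$ have completely explicit values.

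First I would apply Theorem \ref{t2.3} with the index split $5k+r=(5k)+r$, which for $r\geq 2$ gives
$$
F_{5k+r}=F_rF_{5k+1}+F_{r-1}F_{5k}.
$$
This is the only structural step: it converts the large index into the small indices $r$ and $r-1$ while isolating exactly the two terms whose residues modulo $5k+2$ we control.

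Next I would substitute Property \ref{p4.2}, namely $F_{5k+1}\equiv 1\pmod{5k+2}$, together with Property \ref{p4.3}, namely $F_{5k}\equiv 5k\pmod{5k+2}$. Since $5k\equiv -2\pmod{5k+2}$, the displayed identity becomes
$$
F_{5k+r}\equiv F_r\cdot 1+F_{r-1}\cdot(-2)\equiv F_r-2F_{r-1}\pmod{5k+2},
$$
which is exactly the claim.

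Finally, since Theorem \ref{t2.3} is stated only for $l\geq 2$, for completeness one checks the remaining small value $r=1$ directly: it reads $F_{5k+1}\equiv 1=F_1-2F_0\pmod{5k+2}$, which is precisely Property \ref{p4.2}. There is no genuine obstacle in this lemma: the entire difficulty was already absorbed into Properties \ref{p4.2} and \ref{p4.3}, and the statement is essentially a one-line consequence of the addition formula once those residues are known.
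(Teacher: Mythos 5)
Your proof is correct, but it takes a genuinely different route from the paper's. The paper proves the lemma by induction on $r$: the base case $r=1$ is Property \ref{p4.2}, and the inductive step adds the congruences for $F_{5k+r}$ and $F_{5k+r-1}$ via the Fibonacci recurrence to get $F_{5k+r+1}\equiv F_{r+1}-2F_r$. You instead apply the addition formula of Theorem \ref{t2.3} once, splitting the index as $5k+r=(5k)+r$ to get $F_{5k+r}=F_rF_{5k+1}+F_{r-1}F_{5k}$, and then substitute $F_{5k+1}\equiv 1$ and $F_{5k}\equiv 5k\equiv -2\pmod{5k+2}$ from Properties \ref{p4.2} and \ref{p4.3}. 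Your version is shorter and makes the structural reason for the coefficients $1$ and $-2$ transparent (they are the residues of the two boundary values $F_{5k+1}$ and $F_{5k}$), and you correctly patch the restriction $l\geq 2$ in Theorem \ref{t2.3} by checking $r=1$ directly. The paper's induction, by contrast, uses only Property \ref{p4.2} plus the recurrence, though as written its base case covers only $r=1$ while the inductive step consumes two preceding values, so it implicitly also needs $r=2$ (which follows from Property \ref{p4.1}); your argument avoids that subtlety entirely. Both are valid; yours is the cleaner derivation.
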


\begin{proof}
We prove this lemma by induction. For $r=1$, we have $F_{5k+r}=F_{5k+1}\equiv 1\pmod {5k+2}$ and
$F_r-2F_{r-1}=F_1-2F_0=F_1\equiv 1\pmod {5k+2}$.

Let us assume that $F_{5k+s}\equiv F_s-2F_{s-1}\pmod {5k+2}$ for
$s\in[[ 2,r]]$ with $r\geq 2$. Using the assumption, we have for $r\geq 2$
\begin{align}
F_{5k+r+1} &\equiv F_{5k+r}+F_{5k+r-1}\pmod {5k+2}\nonumber \\
&\equiv F_r-2F_{r-1}+F_{r-1}-2F_{r-2}\pmod {5k+2}\nonumber \\
&\equiv F_r+F_{r-1}-2(F_{r-1}+F_{r-2})\pmod {5k+2}\nonumber \\
&\equiv F_{r+1}-2F_r\pmod {5k+2}.\nonumber
\end{align}
Thus the lemma is proved.
\end{proof}

We can prove Lemma \ref{l0.9} as a consequence of Corollary \ref{c0.8} by taking $m=1$.

\begin{corollary}\label{c0.10}
Let $5k+2$ be a prime with $k$ an odd positive integer, let
$m$ be a positive integer and $r\in\mathbb{N}$. Then
$$
F_{m(5k+r)}\equiv  F_{mr}F_{3m+1}-F_{mr+1}F_{3m}\pmod {5k+2}.
$$
\end{corollary}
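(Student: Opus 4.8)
The plan is to deduce this statement directly from Corollary \ref{c0.8}, which already provides
$$
F_{m(5k+r)}\equiv F_{mr}F_{3m-1}-F_{mr-1}F_{3m}\pmod{5k+2}.
$$
It therefore suffices to show that the two right-hand sides coincide, in fact as integers, so that the congruence can simply be rewritten in the new form. First I would form the difference of the claimed expression and the one from Corollary \ref{c0.8}:
$$
\bigl(F_{mr}F_{3m+1}-F_{mr+1}F_{3m}\bigr)-\bigl(F_{mr}F_{3m-1}-F_{mr-1}F_{3m}\bigr)
= F_{mr}\bigl(F_{3m+1}-F_{3m-1}\bigr)-F_{3m}\bigl(F_{mr+1}-F_{mr-1}\bigr).
$$

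Second, I would invoke the Fibonacci recurrence in the form $F_{n+1}-F_{n-1}=F_n$, which is immediate from $F_{n+1}=F_n+F_{n-1}$, applied once with $n=3m$ and once with $n=mr$. This turns the two parenthetical factors into $F_{3m}$ and $F_{mr}$ respectively, so the difference collapses to $F_{mr}F_{3m}-F_{3m}F_{mr}=0$. Hence the two expressions are literally equal as integers, and substituting this equality into the congruence of Corollary \ref{c0.8} yields exactly
$$
F_{m(5k+r)}\equiv F_{mr}F_{3m+1}-F_{mr+1}F_{3m}\pmod{5k+2},
$$
which is the assertion to be proved.

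The argument is essentially a one-line Fibonacci identity, so there is no serious obstacle; the only point requiring a little care is the range of indices. For the recurrence $F_{n+1}-F_{n-1}=F_n$ to be applied at $n=mr$ one needs $mr\geq 1$, which holds for all $m\geq 1$ and $r\geq 1$; the boundary case $r=0$ would force the convention $F_{-1}=1$, so one should either restrict to $r\geq 1$ or treat this degenerate case separately. As a sanity check, setting $m=1$ gives $F_{5k+r}\equiv F_rF_4-F_{r+1}F_3=3F_r-2F_{r+1}\equiv F_r-2F_{r-1}\pmod{5k+2}$, in agreement with Lemma \ref{l0.9}.
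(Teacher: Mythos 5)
Your proposal is correct and follows essentially the same route as the paper: the paper also deduces the statement from Corollary \ref{c0.8} by substituting $F_{mr-1}=F_{mr+1}-F_{mr}$ and regrouping via $F_{3m-1}+F_{3m}=F_{3m+1}$, which is the same one-line Fibonacci manipulation you perform by showing the difference of the two expressions vanishes. Your caution about $r=0$ is well placed; the paper indeed treats that case separately, verifying $F_{5mk}\equiv -F_{3m}\pmod{5k+2}$ by means of Lemma \ref{l0.11}.
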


The above corollary can be deduced from Corollary \ref{c0.8}.

\begin{lemma}\label{l0.11}
Let $5k+2$ be a prime with $k$ an odd positive integer and let
$m$ be a positive integer. Then
$$
F_{5mk}+F_{3m}\equiv 0\pmod {5k+2}.
$$
\end{lemma}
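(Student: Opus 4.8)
The plan is to observe that both Theorem~\ref{t0.4} and Property~\ref{p0.3} express their left-hand sides as multiples of the very same quantity
$$
A_m:=3^{m-1}+{\displaystyle\sum^{m-1}_{i=1}}3^{m-1-i}F_{3i-1},
$$
so that the lemma should drop out of a single addition. Indeed, Theorem~\ref{t0.4} gives $F_{5mk}\equiv 5k\,A_m\pmod{5k+2}$ while Property~\ref{p0.3} gives $F_{3m}\equiv 2A_m\pmod{5k+2}$; adding these yields $F_{5mk}+F_{3m}\equiv (5k+2)A_m\equiv 0\pmod{5k+2}$. This is the heart of the argument, and no further manipulation of the Fibonacci recurrence is required in the generic case.

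The one subtlety is the range of $m$: both Theorem~\ref{t0.4} and Property~\ref{p0.3} are stated for $m>2$, whereas the lemma allows an arbitrary positive integer. I would therefore first dispose of the small cases by hand. For $m=1$, Property~\ref{p4.3} gives $F_{5k}\equiv 5k\pmod{5k+2}$, so $F_{5k}+F_3=F_{5k}+2\equiv 5k+2\equiv 0\pmod{5k+2}$. For $m=2$, the relation $F_{10k}\equiv 20k\pmod{5k+2}$ established inside the proof of Theorem~\ref{t0.4}, together with $F_6=8$, gives $F_{10k}+F_6\equiv 20k+8=4(5k+2)\equiv 0\pmod{5k+2}$.

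With the boundary cases settled, the main case $m>2$ follows immediately from the two displayed congruences, and I expect no real obstacle: the substantive work already resides in Theorem~\ref{t0.4} and Property~\ref{p0.3}, and the lemma is essentially the remark that their common factor $A_m$ is annihilated modulo $5k+2$ once the coefficients $5k$ and $2$ are combined. As an aside, one can check that the two formulas for $F_{5mk}$ and $F_{3m}$ in terms of $A_m$ in fact remain valid at $m=1,2$ under the convention that the empty sum is zero (e.g.\ $A_1=1$ with $F_{5k}\equiv 5k$ and $F_3\equiv 2$, and $A_2=4$ with $F_{10k}\equiv 20k$ and $F_6\equiv 8$), so the small cases could be absorbed into the general argument; nevertheless, treating them separately keeps the exposition transparent.
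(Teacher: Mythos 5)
Your proposal is correct and follows essentially the same route as the paper: both express $F_{5mk}$ and $F_{3m}$ as $5k\,A_m$ and $2A_m$ respectively via Theorem~\ref{t0.4} and Property~\ref{p0.3}, and conclude from $5k+2\equiv 0$, with the small values of $m$ checked by hand. The only cosmetic difference is that you verify $m=1,2$ directly while the paper checks $m=0,1$ and invokes the two cited results from $m\geq 2$ onward.
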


\begin{proof}

For $m=0$, we have $F_{5mk}+F_{3m}=2F_0=0\equiv 0\pmod {5k+2}$.

For $m=1$, we have $F_{5mk}+F_{3m}=F_{5k}+F_3\equiv 5k+2\equiv 0\pmod {5k+2}$.

So, it remains to prove that for $m\geq 2$, we have 
$F_{5mk}+F_{3m}\equiv 0\pmod {5k+2}$.

From Theorem \ref{t0.4}, we have
\begin{align}
F_{5mk} &\equiv 5k\left(3^{m-1}
+{\displaystyle\sum^{m-1}_{i=1}}3^{m-1-i}F_{3i-1}\right)\pmod {5k+2}\nonumber \\
&\equiv 5k\left(3^{m-1}
+{\displaystyle\sum^{m-1}_{i=1}}3^{m-1-i}F_{3i-1}\right)\nonumber\\
&\quad -(5k+2)\left(3^{m-1}
+{\displaystyle\sum^{m-1}_{i=1}}3^{m-1-i}F_{3i-1}\right)\pmod {5k+2}\nonumber\\
&\equiv -2\left(3^{m-1}
+{\displaystyle\sum^{m-1}_{i=1}}3^{m-1-i}F_{3i-1}\right)\pmod {5k+2}.\nonumber
\end{align}
From Property \ref{p0.3}, we have $
F_{5mk}\equiv -F_{3m}\pmod {5k+2}.
$
\end{proof}

We can prove Corollary \ref{c0.10} as a consequence of Lemma \ref{l0.11}.

\begin{remark}
We can observe that $$F_{1\times (5k+r)}=F_{5k+r}$$ and 
$$F_{1\times r}F_{3\times 1+1}-F_{1\times r+1}F_{3\times 1}=F_rF_4-F_{r+1}F_3
=3F_r-2F_{r+1}.$$ By Properties \ref{p4.1}, \ref{p4.2} and \ref{p4.3} we have
$$r=1:~F_{5k+r}=F_{5k+1}\equiv 1\pmod {5k+2}$$
$$3F_r-2F_{r+1}=3F_1-2F_2=1\equiv 1\pmod {5k+2}$$
$$r=2:~F_{5k+r}=F_{5k+2}\equiv 5k+1\pmod {5k+2}$$
$$3F_r-2F_{r+1}=3F_2-2F_3=-1\equiv 5k+1\pmod {5k+2}$$
$$r=3:~F_{5k+r}=F_{5k+3}\equiv 0\pmod {5k+2}$$
$$3F_r-2F_{r+1}=3F_3-2F_4=0\equiv 0\pmod {5k+2}$$
$$r=4:~F_{5k+r}=F_{5k+4}\equiv 5k+1\pmod {5k+2}$$
$$3F_r-2F_{r+1}=3F_4-2F_5=-1\equiv 5k+1\pmod {5k+2}$$

So, we have $$F_{5k+r}\equiv 3F_r-2F_{r+1} \pmod {5k+2}$$ or
equivalently $$F_{1\times (5k+r)}\equiv 
F_{1\times r}F_{3\times 1+1}-F_{1\times r+1}F_{3\times 1}\pmod {5k+2}$$ 
with $r\in[[ 1,4]]$.
\end{remark}

Thus we have

\begin{property}
Let $5k+2$ be a prime with $k$ an odd positive integer, let
$m$ be a positive integer and $r\in\mathbb{N}$. Then
$$
F_{5k+r}\equiv 3F_r-2F_{r+1}\pmod {5k+2}.
$$
\end{property}

\begin{proof}
We have $$F_{5k+0}=F_{5k}\equiv 5k\pmod {5k+2}$$ and
$$3F_0-2F_1=-2\equiv 5k\pmod {5k+2}.$$ So, $F_{5k}\equiv 3F_0-2F_1\pmod {5k+2}$.

Moreover, we know that $$F_{5k+1}\equiv 3F_1-2F_2\equiv 1\pmod {5k+2}.$$

Let us assume that $$F_{5k+s}\equiv 3F_{s}-2F_{s+1}\pmod {5k+2}$$ for
$s\in[[ 1,r]]$. We have for $r\in\mathbb{N}$,

\begin{align}
F_{5k+r+1} &=F_{5k+r}+F_{5k+r-1}\equiv 3F_r-2F_{r+1}+3F_{r-1}-2F_r\pmod {5k+2}\nonumber \\
&\equiv 3(F_r+F_{r-1})-2(F_{r+1}+F_r)\pmod {5k+2}\nonumber \\
&\equiv 3F_{r+1}-2F_{r+2}\pmod {5k+2}.\nonumber 
\end{align}

Thus the proof is complete by induction.
\end{proof}

\begin{property}\label{p0.14}
Let $5k+2$ be a prime with $k$ odd and let $m$ be a positive
integer which is greater than $2$. Then, we have
$$
F_{3m+1}\equiv 3^{m}
+2{\displaystyle\sum^{m-1}_{i=1}}3^{m-1-i}F_{3i}\pmod {5k+2}.
$$
\end{property}

\begin{proof}
We prove the result by induction.

We have for $m=2$, $F_{3m+1}=F_{3\times 2+1}=F_7=13$ and 
$3^{m}+2{\displaystyle\sum^{m-1}_{i=1}}3^{m-1-i}F_{3i}
=3^2+2F_3=9+2\times 2=9+4=13$. So, $F_7\equiv 3^2+2F_3\equiv 13\pmod {5k+2}$.

Let us assume that for $m\geq 2$ the result holds. Using 
this assumption, we have for $m\geq 2$
\begin{align}
F_{3(m+1)+1} &= F_{3m+4}=F_4F_{3m+1}+F_3F_{3m}=3F_{3m+1}+2F_{3m}\nonumber \\
&\equiv 3^{m+1}+2{\displaystyle\sum^{m-1}_{i=1}}3^{m-i}F_{3i}+2F_{3m}\pmod {5k+2}\nonumber \\
&\equiv 3^{m+1}+2{\displaystyle\sum^{m}_{i=1}}3^{m-i}F_{3i}
\pmod {5k+2}.\nonumber
\end{align}
Thus the induction hypothesis holds.
\end{proof}

\begin{corollary}
Let $5k+2$ be a prime with $k$ odd and let $m$ be a positive
integer which is greater than $2$. Then, we have
$$
F_{3m+2}=3^{m}
+2\left(3^{m-1}+{\displaystyle\sum^{m-1}_{i=1}}3^{m-1-i}F_{3i+1}\right)\pmod {5k+2}.
$$
\end{corollary}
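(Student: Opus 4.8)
The plan is to derive the formula directly from the two preceding results rather than setting up a fresh induction, since the corollary is really an immediate consequence of Property \ref{p0.14} together with Property \ref{p0.3}. The starting point is the Fibonacci recurrence in the form $F_{3m+2}=F_{3m+1}+F_{3m}$, which expresses the quantity we want as a sum of two terms for which closed congruence expressions are already available modulo $5k+2$.

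First I would substitute the formula of Property \ref{p0.14} for $F_{3m+1}$ and the formula of Property \ref{p0.3} for $F_{3m}$. This writes $F_{3m+2}$ as $3^m$ together with $2\sum_{i=1}^{m-1}3^{m-1-i}F_{3i}$ coming from the first result, plus $2\cdot 3^{m-1}+2\sum_{i=1}^{m-1}3^{m-1-i}F_{3i-1}$ coming from the second. The two sums share the same index range and the same weights $3^{m-1-i}$, so I would collect them into a single sum whose summand is $F_{3i}+F_{3i-1}$.

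The key simplification is then the Fibonacci recurrence applied to the summand: $F_{3i-1}+F_{3i}=F_{3i+1}$. Performing this replacement turns the merged sum into $2\sum_{i=1}^{m-1}3^{m-1-i}F_{3i+1}$, and the leftover constant term $2\cdot 3^{m-1}$ from Property \ref{p0.3} pairs with it to produce exactly $2\bigl(3^{m-1}+\sum_{i=1}^{m-1}3^{m-1-i}F_{3i+1}\bigr)$. Restoring the $3^m$ term then yields the claimed congruence.

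Since every step is an application of results already proved, there is no genuine obstacle; the only point requiring care is the bookkeeping, namely checking that the isolated constant $2\cdot 3^{m-1}$ arising from Property \ref{p0.3} is precisely the term needed to complete the parenthesized expression in the statement, and that the two index ranges coincide so the sums may be merged termwise. As a sanity check at the base case one verifies $F_8=21$ against $3^2+2(3+F_4)=9+2(3+3)=21$. An alternative route would be a standalone induction mirroring the proof of Property \ref{p0.14}, using $F_{3(m+1)+2}=3F_{3m+2}+2F_{3m+1}$ via Theorem \ref{t2.3}, but this forces one to carry two separate expressions through the inductive step and is less economical than the direct computation above.
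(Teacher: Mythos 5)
Your proposal is correct and is exactly the argument the paper intends: its proof consists of the single remark that the result ``stems from the recurrence relation of the Fibonacci sequence \ldots and Properties \ref{p0.3} and \ref{p0.14},'' and your computation (adding the two congruences, merging the sums, and applying $F_{3i-1}+F_{3i}=F_{3i+1}$) simply fills in the details the authors leave implicit. No discrepancy to report.
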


\begin{proof}
It stems from the recurrence relation of the Fibonacci sequence which
implies that $F_{3m+2}=F_{3m}+F_{3m+1}$ and $F_{3k+1}=F_{3k}+F_{3k-1}$
and Properties \ref{p0.3} and \ref{p0.14}.
\end{proof}

\section{{Some further congruences of Fibonacci numbers modulo a prime}}

In this section we state and prove some more results of the type that were proved in the previous section. These results generalizes some of the 
results in the previous section and in \cite{bj} and \cite{hbj}.

Let $p=5k+r$ with $r\in[[ 1,4]]$ be a prime number
with $k$ a non-zero positive integer such that $k\equiv r+1\pmod 2$.
Notice that $5k+r\pm 1$ is an even number and so
$$
\left\lfloor\frac{5k+r\pm 1}{2}\right\rfloor=\frac{5k+r\pm 1}{2}.
$$

We have the following properties.

\begin{property}\label{p51}
$$
F_{5k+r}\equiv 5^{\frac{5k+r-1}{2}}\equiv\left\{\begin{array}{ccccc}
1\pmod {5k+r} & if & r=1 & or & r=4,
\\
-1\pmod {5k+r} & if & r=2 & or & r=3,
\end{array}
\right.
$$
with $r\in[[ 1,4]]$
$k\in\mathbb{N}$ and $k\equiv r+1\pmod 2$ such that $5k+r$ is prime.
\end{property}

This result is also stated in \cite{hbj}, here we give a different proof below.

\begin{proof}
From Theorems \ref{t1.4} and \ref{t2.1} we have
$$
2^{5k+r-1}F_{5k+r}={\displaystyle\sum^{\frac{5k+r-1}{2}}_{l=0}}
\binom{5k+r}{2l+1}5^l\equiv 5^{\frac{5k+r-1}{2}}\pmod {5k+r},
$$
where we used the fact that $\binom{5k+r}{2l+1}$ is divisible by
$5k+r$ for $l=0,1,\ldots,\frac{5k+r-3}{2}$.

From Theorem \ref{flt}, we have
$$
2^{5k+r-1}\equiv 1\pmod {5k+r}.
$$

We get $F_{5k+r}\equiv 5^{\frac{5k+r-1}{2}}\pmod {5k+r}$. The rest of
the theorem stems from Theorem \ref{t17}.
\end{proof}

\begin{corollary}\label{c52}
Let $p$ be a prime number which is not equal to $5$. Then, we have
$$
F_p\equiv\left\{\begin{array}{ccccc}
1\pmod p & if & p\equiv 1\pmod 5 & or & p\equiv 4\pmod 5,
\\
p-1\pmod p & if & p\equiv 2\pmod 5 & or & p\equiv 3\pmod 5.
\end{array}
\right.
$$
\end{corollary}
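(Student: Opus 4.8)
The plan is to recognize Corollary \ref{c52} as a direct reformulation of Property \ref{p51}; essentially all of the analytic work has already been done there, so the only tasks are to check that the hypotheses of that property are met automatically by an arbitrary odd prime $p\neq 5$, to translate the two case-splits into one another, and to dispose of the small primes by hand.

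First I would observe that since $p\neq 5$ is prime we have $5\nmid p$, so the residue $r$ of $p$ modulo $5$ lies in $\{1,2,3,4\}$, and we may write $p=5k+r$ with $k=(p-r)/5$. The correspondence between the two case distinctions is then immediate: $p\equiv 1\pmod 5$ and $p\equiv 4\pmod 5$ correspond to $r=1$ and $r=4$, while $p\equiv 2\pmod 5$ and $p\equiv 3\pmod 5$ correspond to $r=2$ and $r=3$. Thus the value predicted by Corollary \ref{c52} in each case matches the value predicted by Property \ref{p51}.

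The key point, and really the only thing that needs checking, is that the parity hypothesis $k\equiv r+1\pmod 2$ required by Property \ref{p51} holds automatically for every odd prime. Indeed, since $5k\equiv k\pmod 2$, the number $5k+r$ is odd if and only if $k+r$ is odd, that is, if and only if $k\equiv r+1\pmod 2$; and every prime $p\neq 2$ is odd. Hence for each odd prime $p\neq 5$ (with $k\geq 1$, i.e. $p\geq 7$) Property \ref{p51} applies verbatim and gives $F_p\equiv 1\pmod p$ when $r\in\{1,4\}$ and $F_p\equiv -1\pmod p$ when $r\in\{2,3\}$. Rewriting $-1\equiv p-1\pmod p$ then yields exactly the stated congruences.

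Finally I would treat directly the finitely many small primes not reached by the parametrization with $k\geq 1$. For $p=2$ we have $F_2=1$ and $2\equiv 2\pmod 5$, so $F_2\equiv 1=p-1\pmod 2$; for $p=3$ we have $F_3=2$ and $3\equiv 3\pmod 5$, so $F_3\equiv 2=p-1\pmod 3$. I do not anticipate any genuine obstacle in this proof: the substantive input (via Theorem \ref{t1.4}, Theorem \ref{t2.1} and Theorem \ref{t17}) is already packaged inside Property \ref{p51}, and what remains is the bookkeeping of residues modulo $5$, the automatic parity condition, the reduction $p-1\equiv -1$, and a pair of trivial base cases. If one needs to be careful, the mildest point to watch is whether the convention for $\mathbb{N}$ in Property \ref{p51} permits $k=0$, which is precisely why I verify $p=2$ and $p=3$ separately rather than relying on the general case there.
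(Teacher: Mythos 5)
Your proposal is correct and follows essentially the same route as the paper: both reduce to Property \ref{p51} via the division $p=5k+r$, observe that the parity condition $k\equiv r+1\pmod 2$ is automatic for odd $p$, and check $p=2,3$ directly. Your write-up is in fact slightly more careful than the paper's, which asserts the parity condition without the one-line justification you supply.
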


\begin{proof}
We can notice that $F_2=1\equiv 1\pmod 2$ and $2\equiv 2\pmod 5$.
Moreover, we can notice that $F_3=2\equiv 2\pmod 3$ and $3\equiv
3\pmod 5$. So, Corollary \ref{c52} is true for $p=2,3$.

We can observe that the result of Corollary \ref{c52} doesn't work for $p=5$
since $F_5=5\equiv 0\pmod 5$.

The Euclid division of a prime number $p>5$ by $5$ allows to write $p$
like $5k+r$ with $0\leq r<5$ and $k\equiv r+1\pmod 2$. Then, applying
Property \ref{p51}, we verify that 
the result of Corollary \ref{c52} is also true for $p>5$.

It completes the proof of this corollary.
\end{proof}

\begin{property}\label{p52}
$$
F_{5k+r-1}\equiv\left\{\begin{array}{ccccc}
0\pmod {5k+r} & if & r=1 & or & r=4,
\\
1\pmod {5k+r} & if & r=2 & or & r=3
\end{array}
\right.
$$
with $r\in[[ 1,4]]$
$k\in\mathbb{N}$ and $k\equiv r+1\pmod 2$ such that $5k+r$ is prime.
\end{property}

Some parts of this result is stated in \cite{bj} in a different form. We give an alternate proof of the result below.

\begin{proof}
From Theorem \ref{t2.1} and Property \ref{p1.20} we have
$$
2^{5k+r}F_{5k+r-1}=4{\displaystyle\sum^{\lfloor\frac{5k+r-2}{2}\rfloor}_{l=0}}
\binom{5k+r-1}{2l+1}5^l\equiv 4(5k+r-1)
{\displaystyle\sum^{\lfloor\frac{5k+r-2}{2}\rfloor}_{l=0}}5^l\pmod {5k+r}.
$$
It comes that
$$
2^{5k+r}F_{5k+r-1}\equiv (5k+r-1)
\left(5^{\lfloor\frac{5k+r-2}{2}\rfloor+1}-1\right)
\pmod {5k+r}.
$$
From the Theorem \ref{flt}, we have
$$
2^{5k+r}\equiv 2\pmod {5k+r}.
$$
So, since $5k+r-1$ is even and since $2$ and $5k+r$ are relatively
prime when $5k+r$ prime, we obtain
$$
F_{5k+r-1}\equiv\frac{5k+r-1}{2}
\left(5^{\lfloor\frac{5k+r-2}{2}\rfloor+1}-1\right)\pmod {5k+r}.
$$
Since $5k+r-1$ is even and so $\frac{5k+r-1}{2}$ is an integer, 
we can notice that
\begin{align}
\displaystyle\left\lfloor\frac{5k+r-2}{2}\displaystyle\right\rfloor+1 &= \displaystyle\left\lfloor\frac{5k+r-1}{2}-\frac{1}{2}\displaystyle\right\rfloor+1
=\frac{5k+r-1}{2}+\displaystyle\left\lfloor-\frac{1}{2}\displaystyle\right\rfloor+1\nonumber \\
&= \frac{5k+r-1}{2}+\displaystyle\left\lfloor1-\frac{1}{2}\displaystyle\right\rfloor
=\frac{5k+r-1}{2}+\displaystyle\left\lfloor\frac{1}{2}\displaystyle\right\rfloor=\frac{5k+r-1}{2}\nonumber
\end{align}
where we used the property that $\lfloor n+x\rfloor=n+\lfloor
x\rfloor$ for all $n\in\mathbb{N}$ and for all $x\in\mathbb{R}$.

It follows that
\begin{equation}\label{a2}
F_{5k+r-1}\equiv\frac{5k+r-1}{2}
\left(5^{\frac{5k+r-1}{2}}-1\right)\pmod {5k+r}.
\end{equation}
The case $r=2$ was done above. We found (see Property
\ref{p4.2}) and we can verify from the congruence above that
$$
F_{5k+1}\equiv 1\pmod {5k+2}.
$$
From Theorem \ref{t17}, if $r=1$, we have 
$5^{\frac{5k+r-1}{2}}=5^{\frac{5k}{2}}\equiv 1\pmod
{5k+1}$. So, using \eqref{a2}, we
deduce that
$$
F_{5k}\equiv 0\pmod {5k+1}.
$$
From Theorem \ref{t17}, if $r=3$, we have 
$5^{\frac{5k+r-1}{2}}=5^{\frac{5k+2}{2}}\equiv -1\equiv 5k+2\pmod
{5k+3}$. So, using \eqref{a2}, we
deduce that
$$
F_{5k+2}\equiv -(5k+2)\equiv 1\pmod {5k+3}.
$$
From Theorem \ref{t17}, if $r=4$, we have 
$5^{\frac{5k+r-1}{2}}=5^{\frac{5k+3}{2}}\equiv 1\pmod
{5k+4}$. So, using \eqref{a2}, we
deduce that
$$
F_{5k+3}\equiv 0\pmod {5k+4}.
$$
\end{proof}

The following two results are easy consequences of Properties \ref{p51} and \ref{p52}.

\begin{property}\label{p53}
$$
F_{5k+r-2}\equiv\left\{\begin{array}{ccccc}
1\pmod {5k+r} & if & r=1 & or & r=4,
\\
-2\pmod {5k+r} & if & r=2 & or & r=3,
\end{array}
\right.
$$
with $r\in[[ 1,4]]$
$k\in\mathbb{N}$ and $k\equiv r+1\pmod 2$ such that $5k+r$ is prime.
\end{property}

\begin{property}\label{p54}
$$
F_{5k+r+1}\equiv\left\{\begin{array}{ccccc}
1\pmod {5k+r} & if & r=1 & or & r=4,
\\
0\pmod {5k+r} & if & r=2 & or & r=3,
\end{array}
\right.
$$
with $r\in[[ 1,4]]$
$k\in\mathbb{N}$ and $k\equiv r+1\pmod 2$ such that $5k+r$ is prime.
\end{property}

The following is a consequence of Properties \ref{p51} and \ref{p54}.

\begin{property}\label{p55}
$$
F_{5k+r+2}\equiv\left\{\begin{array}{ccccc}
2\pmod {5k+r} & if & r=1 & or & r=4,
\\
-1\pmod {5k+r} & if & r=2 & or & r=3,
\end{array}
\right.
$$
with $r\in[[ 1,4]]$
$k\in\mathbb{N}$ and $k\equiv r+1\pmod 2$ such that $5k+r$ is prime.
\end{property}

Some of the stated properties above are given in \cite{bj} and \cite{hbj} also, but the methods used here are different.

\section{{Periods of the Fibonacci sequence modulo a positive 
integer}}

Notice that $F_1=F_2\equiv 1\pmod m$ with $m$ an integer which is
greater than 2.

\begin{definition}
The Fibonacci sequence $(F_n)$ is periodic modulo a positive integer
$m$ which is greater than $2$ ($m\geq 2$), if there exists at least a non-zero
integer $\ell_m$ such that
$$
F_{1+\ell_m}\equiv F_{2+\ell_m}\equiv 1 \pmod m.
$$ 
The number $\ell_m$ is called a period of the Fibonacci sequence $(F_n)$ modulo
$m$.
\end{definition}

\begin{remark}
For $m\geq 2$ we have $l_m\geq 2$.
Indeed, $\ell_m$ cannot be equal to $1$ since $F_3=2$.
\end{remark}

From Theorem \ref{t2.3} we have
$$
F_{2+\ell_m}=F_{\ell_m}F_3+F_{\ell_m-1}F_2\equiv 2F_{\ell_m}+F_{\ell_m-1}\pmod m.
$$
Since $F_{\ell_m}+F_{\ell_m-1}=F_{1+\ell_m}$, we get
$$
F_{2+\ell_m}\equiv 2F_{\ell_m}+F_{\ell_m-1}\equiv
F_{\ell_m}+F_{1+\ell_m}\equiv F_{\ell_m}+F_{2+\ell_m}\pmod m.
$$
Therefore we have the following
\begin{property}\label{p5.3}

$$
F_{\ell_m}\equiv 0\pmod m.
$$
\end{property}
Moreover, from Theorem \ref{t2.3} we have
$$
F_{1+\ell_m}=F_{\ell_m}F_2+F_{\ell_m-1}F_1\equiv F_{\ell_m}+F_{\ell_m-1}\equiv 
F_{\ell_m-1}\pmod m.
$$
Since $F_{1+\ell_m}\equiv 1\pmod m$, we obtain the following

\begin{property}\label{p5.4}
$$
F_{\ell_m-1}\equiv 1\pmod m.
$$
\end{property}
Besides, using the recurrence relation of the Fibonacci sequence, from
Property \ref{p5.3} we get
$$
F_{\ell_m-2}+F_{\ell_m-1}=F_{\ell_m}\equiv 0\pmod m.
$$
Using Property \ref{p5.4} we obtain

\begin{property}
$$
F_{\ell_m-2}\equiv m-1\pmod m.
$$
\end{property}

\begin{remark}\label{rmtm}
From Theorem \ref{t2.3} we have for $m\geq 2$
$$
F_{2m}=F_{m+m}=F_mF_{m+1}+F_{m-1}F_m=F_m(F_{m+1}+F_{m-1}),
$$
and
\begin{align}
F_{2m+1} &=F_{(m+1)+m}=F_mF_{m+2}+F_{m-1}F_{m+1}\nonumber \\
&= F_m(F_m+F_{m+1})+F_{m-1}(F_{m-1}+F_m)\nonumber \\
&= F_m(2F_m+F_{m-1})+F_{m-1}(F_{m-1}+F_m)\nonumber \\
&= 2F^2_m+2F_mF_{m-1}+F^2_{m-1}=F^2_m+F^2_{m+1}.\nonumber
\end{align}
From this we get
$$
F_{2m+2}=F_{2m+1}+F_{2m}=F^2_m+F^2_{m+1}+F_m(F_{m+1}+F_{m-1}),
$$
$$
F_{2m+3}=F_3F_{2m+1}+F_2F_{2m}=2(F^2_m+F^2_{m+1})+F_m(F_{m+1}+F_{m-1}),
$$
and
$$
F_{2m+4}=F_{2m+3}+F_{2m+2}=3(F^2_m+F^2_{m+1})+2F_m(F_{m+1}+F_{m-1}).
$$

\end{remark}

\begin{theorem}\label{t57}

A period of the Fibonacci sequence modulo $5k+2$ with $5k+2$ a prime
 and $k$ odd is given by
$$
\ell_{5k+2}=2(5k+3).
$$
\end{theorem}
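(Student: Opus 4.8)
The plan is to verify the defining conditions of a period directly, namely to show that for $\ell=2(5k+3)=10k+6$ one has $F_{1+\ell}\equiv F_{2+\ell}\equiv 1\pmod{5k+2}$. Writing $1+\ell=10k+7$ and $2+\ell=10k+8$ and using the Fibonacci recurrence in the form $F_{10k+7}=F_{10k+6}+F_{10k+5}$ and $F_{10k+8}=F_{10k+7}+F_{10k+6}$, both target congruences follow at once once I establish the two facts $F_{10k+6}\equiv 0\pmod{5k+2}$ and $F_{10k+5}\equiv 1\pmod{5k+2}$. This reduction is the first step I would carry out.

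The key observation, and essentially the only idea in the argument, is that $F_{5k+3}$ vanishes modulo $5k+2$. Indeed, by the recurrence together with Properties \ref{p4.1} and \ref{p4.2},
$$
F_{5k+3}=F_{5k+2}+F_{5k+1}\equiv (5k+1)+1\equiv 0\pmod{5k+2}.
$$
I would then split the two target indices so as to exploit this vanishing through the doubling identities recorded in Remark \ref{rmtm}. For the even index I write $10k+6=2(5k+3)$ and apply $F_{2n}=F_n(F_{n+1}+F_{n-1})$ with $n=5k+3$; since $F_{5k+3}\equiv 0$, the whole product is $\equiv 0$, so $F_{10k+6}\equiv 0\pmod{5k+2}$. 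For the odd index I write $10k+5=2(5k+2)+1$ and apply $F_{2n+1}=F_n^2+F_{n+1}^2$ with $n=5k+2$; using Property \ref{p4.1} and the vanishing of $F_{5k+3}$,
$$
F_{10k+5}=F_{5k+2}^2+F_{5k+3}^2\equiv(5k+1)^2+0\equiv 1\pmod{5k+2},
$$
where I use $(5k+1)^2\equiv(-1)^2\equiv 1$.

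Combining these gives $F_{10k+7}\equiv 0+1\equiv 1$ and $F_{10k+8}\equiv 1+0\equiv 1\pmod{5k+2}$, which are exactly the conditions defining $\ell_{5k+2}=2(5k+3)$ as a period. I do not anticipate any serious obstacle here: every step is a routine consequence of the already-established Properties \ref{p4.1}, \ref{p4.2} and the doubling identities of Remark \ref{rmtm}. The one point requiring a little care is the choice of how to decompose the indices $10k+6$ and $10k+5$, the first as $2(5k+3)$ and the second as $2(5k+2)+1$, so that the two doubling formulas line up precisely with the single congruence $F_{5k+3}\equiv 0$; once that choice is made, the verification is immediate.
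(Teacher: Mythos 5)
Your proof is correct and follows essentially the same route as the paper's: both hinge on the observation $F_{5k+3}=F_{5k+2}+F_{5k+1}\equiv 0\pmod{5k+2}$ together with the doubling identities of Remark \ref{rmtm} and the congruences of Properties \ref{p4.1}--\ref{p4.3}. The only difference is bookkeeping — the paper plugs $m=5k+2$ into the $F_{2m+3}$ and $F_{2m+4}$ formulas and expands $2(5k+1)^2+5k+1$ directly, whereas you compute $F_{10k+6}$ and $F_{10k+5}$ first and then apply the recurrence, which is marginally cleaner but not a different argument.
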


\begin{proof}

Using the recurrence relation of the Fibonacci sequence, and from Properties \ref{p4.1}, \ref{p4.2} and \ref{p4.3} we have 
$$
F_{5k+3}=F_{5k+2}+F_{5k+1}\equiv 5k+2\equiv 0\pmod {5k+2}.
$$
Taking $m=5k+2$ prime ($k$ odd) in the formulas of $F_{2m+3}$ and
$F_{2m+4}$, we have 
\begin{align}
F_{10k+7} &= 2(F^2_{5k+2}+F^2_{5k+3})+F_{5k+2}(F_{5k+3}+F_{5k+1})\nonumber \\
&\equiv 2(5k+1)^2+5k+1\pmod {5k+2}\nonumber \\
&\equiv 50k^2+20k+2+5k+1\equiv 10k(5k+2)+(5k+2)+1\pmod {5k+2}\nonumber \\
&\equiv 1+(10k+1)(5k+2)\equiv 1\pmod {5k+2},\nonumber
\end{align}
and
\begin{align}
F_{10k+8} &= 3(F^2_{5k+2}+F^2_{5k+3})+2F_{5k+2}(F_{5k+3}+F_{5k+1})\nonumber \\
&\equiv 3(5k+1)^2+2(5k+1)\pmod {5k+2}\nonumber \\
&\equiv 75k^2+30k+3+10k+2\pmod {5k+2}\nonumber \\
&\equiv 15k(5k+2)+2(5k+2)+1\pmod {5k+2}\nonumber \\
&\equiv 1+(15k+2)(5k+2)\equiv 1\pmod {5k+2}.\nonumber
\end{align}
Thus
$$
F_{10k+7}\equiv F_{10k+8}\equiv 1\pmod {5k+2},
$$
or equivalently
$$
F_{1+2(5k+3)}\equiv F_{2+2(5k+3)}\equiv 1\pmod {5k+2}.
$$
We deduce that a period of the Fibonacci sequence modulo $5k+2$ with
$5k+2$ a prime is $\ell_{5k+2}=2(5k+3)$.
\end{proof}

We can generalize the above result as follows.

\begin{theorem}\label{t68}
A period of the Fibonacci sequence modulo $5k+r$ with $5k+r$ a prime
such that $r=2,3$ and $k\equiv r+1\pmod 2$ is given by
$$
\ell_{5k+r}=2(5k+r+1).
$$
\end{theorem}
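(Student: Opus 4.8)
The plan is to mirror the proof of Theorem \ref{t57}, replacing the ad hoc residue computations used there with the uniform residues furnished by Properties \ref{p51}, \ref{p52} and \ref{p54}, which already cover both $r=2$ and $r=3$ simultaneously. Writing $m=5k+r$, the claimed period $2(5k+r+1)=2(m+1)$ means I must verify that $F_{1+2(m+1)}$ and $F_{2+2(m+1)}$ are both congruent to $1$ modulo $5k+r$. Since $1+2(m+1)=2m+3$ and $2+2(m+1)=2m+4$, it suffices to show $F_{2m+3}\equiv F_{2m+4}\equiv 1\pmod{5k+r}$, and for exactly these two indices Remark \ref{rmtm} supplies the closed forms $F_{2m+3}=2(F_m^2+F_{m+1}^2)+F_m(F_{m+1}+F_{m-1})$ and $F_{2m+4}=3(F_m^2+F_{m+1}^2)+2F_m(F_{m+1}+F_{m-1})$.

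The key step is to record the three residues $F_m$, $F_{m-1}$, $F_{m+1}$ modulo $5k+r$ for $r\in\{2,3\}$. From Property \ref{p51} one has $F_m=F_{5k+r}\equiv -1$, from Property \ref{p52} one has $F_{m-1}=F_{5k+r-1}\equiv 1$, and from Property \ref{p54} one has $F_{m+1}=F_{5k+r+1}\equiv 0$, all taken modulo $5k+r$. Substituting these into the two closed forms gives $F_m^2+F_{m+1}^2\equiv 1$ and $F_m(F_{m+1}+F_{m-1})\equiv(-1)(0+1)=-1$, whence $F_{2m+3}\equiv 2\cdot 1+(-1)=1$ and $F_{2m+4}\equiv 3\cdot 1+2(-1)=1$ modulo $5k+r$.

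This yields $F_{1+2(5k+r+1)}\equiv F_{2+2(5k+r+1)}\equiv 1\pmod{5k+r}$, which by the definition of a period shows that $\ell_{5k+r}=2(5k+r+1)$ is indeed a period of the Fibonacci sequence modulo $5k+r$. I do not anticipate a genuine obstacle, since the entire content of the generalization is already packaged in Properties \ref{p51}, \ref{p52} and \ref{p54}; the argument reduces to the algebraic substitution above together with the index bookkeeping $2m+3=1+2(m+1)$ and $2m+4=2+2(m+1)$. The only point requiring a little care is confirming that the hypothesis $k\equiv r+1\pmod 2$ is precisely the condition under which those three Properties were established, so that they may legitimately be invoked for $r=2,3$; specializing to $r=2$ then recovers Theorem \ref{t57} as a particular case.
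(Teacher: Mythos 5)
Your proof is correct and follows essentially the same route as the paper: both verify $F_{1+\ell}\equiv F_{2+\ell}\equiv 1\pmod{5k+r}$ by combining the doubling identities of Remark \ref{rmtm} with the residues of $F_{5k+r+j}$ supplied by Properties \ref{p51}--\ref{p55}. The only immaterial difference is that you substitute $m=5k+r$ into the $F_{2m+3},F_{2m+4}$ formulas, exactly as in Theorem \ref{t57}, whereas the paper substitutes $m=5k+r+1$ into the $F_{2m},F_{2m+1}$ formulas and then applies the recurrence.
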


\begin{proof}
Using the formula for $F_{2m}$ given in Remark \ref{rmtm}, taking
$m=5k+r+1$, we have
$$
F_{2(5k+r+1)}=F_{5k+r+1}(F_{5k+r}+F_{5k+r+2}).
$$
From Properties \ref{p51}, \ref{p54} and \ref{p55}, we obtain
$$
F_{2(5k+r+1)}\equiv\left\{\begin{array}{ccccc}
3\pmod {5k+r} & if & r=1 & or & r=4,
\\
0\pmod {5k+r} & if & r=2 & or & r=3.
\end{array}
\right.
$$
Using the formula for $F_{2m+1}$ given in Remark \ref{rmtm}, taking
$m=5k+r+1$, we have
$$
F_{2(5k+r+1)+1}=F^2_{5k+r+1}+F^2_{5k+r+2}.
$$
From Properties \ref{p51} and \ref{p54}, we obtain
$$
F_{2(5k+r+1)+1}\equiv\left\{\begin{array}{ccccc}
5\pmod {5k+r} & if & r=1 & or & r=4,
\\
1\pmod {5k+r} & if & r=2 & or & r=3.
\end{array}
\right.
$$
Using the recurrence relation of the Fibonacci sequence, we have
$F_{2(5k+r+1)+2}=F_{2(5k+r+1)}+F_{2(5k+r+1)+1}$. So
$$
F_{2(5k+r+1)+2}\equiv\left\{\begin{array}{ccccc}
8\pmod {5k+r} & if & r=1 & or & r=4,
\\
1\pmod {5k+r} & if & r=2 & or & r=3.
\end{array}
\right.
$$
Therefore, when $5k+r$ is prime such that $r=2,3$ and $k\equiv
r+1\pmod 2$, we have $F_{2(5k+r+1)}\equiv 0\pmod {5k+r}$ and 
$F_{2(5k+r+1)+1}\equiv F_{2(5k+r+1)+2}\equiv 1\pmod {5k+r}$. It
results that if $5k+r$ is prime such that $r=2,3$ and $k\equiv
r+1\pmod 2$, then $2(5k+r+1)$ is a period of the Fibonacci sequence
modulo $5k+r$. 
\end{proof}

\begin{theorem}\label{t69}
A period of the Fibonacci sequence modulo $5k+r$ with $5k+r$ a prime
such that $r=1,4$ and $k\equiv r+1\pmod 2$ is given by
$$
\ell_{5k+r}=2(5k+r-1).
$$
\end{theorem}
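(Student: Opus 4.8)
The plan is to follow the template of the proof of Theorem~\ref{t68} essentially verbatim, changing only the index at which the doubling formulas of Remark~\ref{rmtm} are evaluated. The conceptual point is that the period attached to the prime $5k+r$ is governed by which neighbouring Fibonacci number vanishes modulo $5k+r$: for $r=2,3$ it was $F_{5k+r+1}$ that was $\equiv 0$, which forced the shift $m=5k+r+1$ and led to the period $2(5k+r+1)$; whereas for $r=1,4$ it is $F_{5k+r-1}$ that is $\equiv 0$ by Property~\ref{p52}, so here I would instead set $m=5k+r-1$ and expect the period $2(5k+r-1)$.

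First I would collect the three congruences supplied by Section~4 in the branch $r=1,4$: Property~\ref{p52} gives $F_{5k+r-1}\equiv 0$, Property~\ref{p51} gives $F_{5k+r}\equiv 1$, and Property~\ref{p53} gives $F_{5k+r-2}\equiv 1$, all modulo $5k+r$. With $m=5k+r-1$ these are exactly $F_m\equiv 0$, $F_{m+1}\equiv 1$, and $F_{m-1}\equiv 1$. Substituting into $F_{2m}=F_m(F_{m+1}+F_{m-1})$ makes the whole product vanish, so $F_{2m}\equiv 0\pmod{5k+r}$ irrespective of the value of $F_{m+1}+F_{m-1}$; substituting into $F_{2m+1}=F_m^2+F_{m+1}^2$ gives $F_{2m+1}\equiv 1\pmod{5k+r}$; and the recurrence then yields $F_{2m+2}=F_{2m+1}+F_{2m}\equiv 1\pmod{5k+r}$.

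Finally I would translate the indices back: since $2m+1=1+2(5k+r-1)$ and $2m+2=2+2(5k+r-1)$, the two congruences just obtained read $F_{1+\ell}\equiv F_{2+\ell}\equiv 1\pmod{5k+r}$ with $\ell=2(5k+r-1)$, which is precisely the definition of $2(5k+r-1)$ being a period of $(F_n)$ modulo $5k+r$.

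I do not expect any genuine obstacle: all the arithmetic difficulty is already discharged inside Properties~\ref{p51}, \ref{p52} and \ref{p53} (ultimately resting on the Legendre-symbol evaluation of $(5/5k+r)$ from Section~1). The only points requiring a word of care are that the doubling identities in Remark~\ref{rmtm} are stated for $m\ge 2$, which holds here since $m=5k+r-1\ge 2$ under the hypotheses, and that the statement asserts \emph{a} period rather than the fundamental one, so no minimality argument is needed.
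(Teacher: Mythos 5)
Your proposal is correct and follows essentially the same route as the paper: the paper likewise sets $m=5k+r-1$ in the doubling formulas of Remark~\ref{rmtm}, uses Properties~\ref{p51}, \ref{p52} and \ref{p53} to get $F_{2(5k+r-1)}\equiv 0$ and $F_{2(5k+r-1)+1}\equiv F_{2(5k+r-1)+2}\equiv 1 \pmod{5k+r}$ for $r=1,4$, and concludes. No gaps.
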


\begin{proof}
Using the formula for $F_{2m}$ given in Remark \ref{rmtm}, taking
$m=5k+r-1$, we have
$$
F_{2(5k+r-1)}=F_{5k+r-1}(F_{5k+r}+F_{5k+r-2}).
$$
From Properties \ref{p51}, \ref{p52} and \ref{p53}, we obtain
$$
F_{2(5k+r-1)}\equiv\left\{\begin{array}{ccccc}
0\pmod {5k+r} & if & r=1 & or & r=4,
\\
-3\pmod {5k+r} & if & r=2 & or & r=3.
\end{array}
\right.
$$
Using the formula for $F_{2m+1}$ given in Remark \ref{rmtm}, taking
$m=5k+r-1$, we have
$$
F_{2(5k+r-1)+1}=F^2_{5k+r-1}+F^2_{5k+r}.
$$
From Properties \ref{p51} and \ref{p52}, we obtain
$$
F_{2(5k+r-1)+1}\equiv\left\{\begin{array}{ccccc}
1\pmod {5k+r} & if & r=1 & or & r=4,
\\
2\pmod {5k+r} & if & r=2 & or & r=3.
\end{array}
\right.
$$
Using the recurrence relation of the Fibonacci sequence, we have
$F_{2(5k+r-1)+2}=F_{2(5k+r-1)}+F_{2(5k+r-1)+1}$. So
$$
F_{2(5k+r-1)+2}\equiv\left\{\begin{array}{ccccc}
1\pmod {5k+r} & if & r=1 & or & r=4,
\\
-1\pmod {5k+r} & if & r=2 & or & r=3.
\end{array}
\right.
$$
Therefore, when $5k+r$ is prime such that $r=1,4$ and $k\equiv
r+1\pmod 2$, we have $F_{2(5k+r-1)}\equiv 0\pmod {5k+r}$ and 
$F_{2(5k+r-1)+1}\equiv F_{2(5k+r-1)+2}\equiv 1\pmod {5k+r}$. It
results that if $5k+r$ is prime such that $r=1,4$ and $k\equiv
r+1\pmod 2$, then $2(5k+r-1)$ is a period of the Fibonacci sequence
modulo $5k+r$. 
\end{proof}

\begin{corollary}\label{c610}
A period of the Fibonacci sequence modulo $5k+r$ with $5k+r$ a prime
such that $r=1,2,3,4$ and $k\equiv r+1\pmod 2$ is given by
$$
\ell_{5k+r}=\left\{\begin{array}{ccccc}
10k & if & r=1, & &
\\
2(5k+3) & if & r=2 & or & r=4,
\\
2(5k+4) & if & r=3, & & 
\end{array}
\right.
$$
or more compactly
$$
\ell_{5k+r}=10k+3(1+(-1)^r)+2(r-1)(1-(-1)^r).
$$
\end{corollary}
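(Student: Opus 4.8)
The plan is to observe that this corollary merely consolidates the two preceding theorems, so the entire argument reduces to specializing the parameter $r$ and simplifying. For the cases $r=1$ and $r=4$ (where the hypothesis $k\equiv r+1\pmod 2$ forces $k$ even), Theorem \ref{t69} gives $\ell_{5k+r}=2(5k+r-1)$, while for $r=2$ and $r=3$ Theorem \ref{t68} gives $\ell_{5k+r}=2(5k+r+1)$. Thus there is no new number-theoretic content; the work is purely algebraic bookkeeping, and I would state this reduction explicitly at the outset.

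First I would substitute each admissible value of $r$ into the relevant theorem. Taking $r=1$ in Theorem \ref{t69} yields $\ell_{5k+1}=2(5k+1-1)=10k$, and taking $r=4$ in the same theorem yields $\ell_{5k+4}=2(5k+4-1)=2(5k+3)$. Taking $r=2$ in Theorem \ref{t68} yields $\ell_{5k+2}=2(5k+2+1)=2(5k+3)$, and taking $r=3$ yields $\ell_{5k+3}=2(5k+3+1)=2(5k+4)$. Collecting these four evaluations reproduces the displayed piecewise formula, noting in particular that $r=2$ and $r=4$ both give the common value $2(5k+3)$, which is why those two cases are grouped together.

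It then remains to verify that the single closed form $10k+3(1+(-1)^r)+2(r-1)(1-(-1)^r)$ agrees with the piecewise values. The key is the parity switch encoded by the two complementary indicators. For odd $r$ one has $(-1)^r=-1$, so the term $3(1+(-1)^r)$ vanishes and the expression collapses to $10k+4(r-1)$, giving $10k$ at $r=1$ and $10k+8=2(5k+4)$ at $r=3$. For even $r$ one has $(-1)^r=1$, so instead the term $2(r-1)(1-(-1)^r)$ vanishes and the expression collapses to $10k+6=2(5k+3)$, independent of whether $r=2$ or $r=4$. Matching these four values against the piecewise list completes the verification.

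I do not anticipate any genuine obstacle here, since nothing needs to be proved beyond a substitution and a simplification. The only point requiring a moment's care is keeping the two indicator terms $(1+(-1)^r)$ and $(1-(-1)^r)$ straight, as each annihilates the other according to the parity of $r$; confirming that the even cases $r=2,4$ coincide while the odd cases $r=1,3$ split is the whole substance of deriving the compact form.
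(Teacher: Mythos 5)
Your proposal is correct and follows exactly the route the paper takes: the paper simply states that Corollary \ref{c610} follows from Theorems \ref{t68} and \ref{t69}, and your substitutions $r=1,4$ into $\ell_{5k+r}=2(5k+r-1)$ and $r=2,3$ into $\ell_{5k+r}=2(5k+r+1)$, together with the parity check of the compact formula, are precisely the omitted bookkeeping. All four evaluations and the $(-1)^r$ verification are accurate.
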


Corollary \ref{c610} follows from Theorems \ref{t68} and \ref{t69}.

\begin{corollary}\label{c611}
A period of the Fibonacci sequence modulo $p$ with $p$ a prime
which is not equal to $5$ is given by
$$
\ell_p=\left\{\begin{array}{ccccc}
2(p-1) & if & p\equiv 1\pmod 5 & or & p\equiv 4\pmod 5,
\\
2(p+1) & if & p\equiv 2\pmod 5 & or & p\equiv 3\pmod 5.
\end{array}
\right.
$$
\end{corollary}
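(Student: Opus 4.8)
The plan is to deduce this corollary directly from Corollary~\ref{c610} by rewriting the period formula expressed in terms of $5k+r$ as one expressed in terms of $p$, in exactly the same spirit that Corollary~\ref{c52} was obtained from Property~\ref{p51}. The only genuine ingredient is a parity remark that frees the congruence hypothesis of Corollary~\ref{c610} for odd primes, after which the result is pure bookkeeping over the four residue classes modulo $5$, supplemented by a direct check of the two small primes.

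First I would treat a prime $p>5$. Since $p$ is not divisible by $5$, Euclidean division by $5$ writes $p=5k+r$ with $r\in\{1,2,3,4\}$. The essential observation is that the hypothesis $k\equiv r+1\pmod 2$ demanded by Corollary~\ref{c610} holds automatically here: because $p>5$ is prime it is odd, so $5k+r$ is odd, and as $5k\equiv k\pmod 2$ this forces $k+r$ to be odd, that is $k\not\equiv r\pmod 2$, equivalently $k\equiv r+1\pmod 2$. Thus Corollary~\ref{c610} applies to every prime $p>5$, and it remains only to translate its three cases. For $r=1$ we have $5k=p-1$, so $10k=2(p-1)$; for $r=4$ we have $5k+3=p-1$, so $2(5k+3)=2(p-1)$; for $r=2$ we have $5k+3=p+1$, so $2(5k+3)=2(p+1)$; and for $r=3$ we have $5k+4=p+1$, so $2(5k+4)=2(p+1)$. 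Since $r=1,4$ correspond to $p\equiv 1,4\pmod 5$ and $r=2,3$ to $p\equiv 2,3\pmod 5$, these four identities reproduce exactly the two cases in the statement.

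It then remains to verify the two primes $p=2$ and $p=3$, which fall outside the range $k\geq 1$ covered by Theorems~\ref{t68} and~\ref{t69}, and I would check these directly against the claimed values, just as the small primes were handled in the proof of Corollary~\ref{c52}. For $p=2$ (so $p\equiv 2\pmod 5$) the formula predicts $\ell_2=2(p+1)=6$, and indeed $F_{1+6}=F_7=13\equiv 1\pmod 2$ and $F_{2+6}=F_8=21\equiv 1\pmod 2$. For $p=3$ (so $p\equiv 3\pmod 5$) the formula predicts $\ell_3=2(p+1)=8$, and indeed $F_{1+8}=F_9=34\equiv 1\pmod 3$ and $F_{2+8}=F_{10}=55\equiv 1\pmod 3$. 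Both satisfy the defining congruences of a period, completing the argument. There is no serious obstacle in this corollary; the only point requiring care is to confirm that $p=2$ and $p=3$ genuinely fulfil the formula rather than being merely consistent with it, which the explicit computations above settle.
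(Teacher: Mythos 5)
Your proposal is correct and follows essentially the same route as the paper: Euclidean division $p=5k+r$, application of Corollary~\ref{c610}, and translation of the three cases into the two cases stated in terms of $p$. You are somewhat more careful than the paper in verifying that the parity hypothesis $k\equiv r+1\pmod 2$ holds automatically for odd $p$ and in checking $p=2$ and $p=3$ explicitly, but the argument is the same.
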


\begin{proof}
The Euclid division of $p$ by $5$ is written $p=5k+r$ with $0\leq r<5$
and $k\equiv r+1\pmod 2$. Then, applying Corollary \ref{c610}, it
gives:
$$
\begin{array}{ccccc}
r=1 & & p=5k+1 & & \ell_p=\ell_{5k+1}=10k=2p-2=2(p-1)
\\
r=2 & & p=5k+2 & & \ell_p=\ell_{5k+2}=10k+6=2p+2=2(p+1)
\\
r=3 & & p=5k+3 & & \ell_p=\ell_{5k+3}=10k+8=2p+2=2(p+1)
\\
r=4 & & p=5k+4 & & \ell_p=\ell_{5k+4}=10k+6=2p-2=2(p-1).
\end{array}
$$
\end{proof}

\begin{property}\label{p612}
A period of the Fibonacci sequence modulo $5$ is $20$.
\end{property}

\begin{proof}
From Property \ref{p33}, we know that $F_{5k}\equiv 0\pmod 5$ with
$k\in\mathbb{N}$. Using the recurrence relation of the Fibonacci
sequence, we have $F_{5k+1}\equiv F_{5k+2}\pmod 5$.
So, it is relevant to search a period as an integer multiple of
$5$. Trying the first non-zero values of $k$, it gives:
$$
\begin{array}{ccccc}
k=1 & & F_{5k+1}=F_6\equiv 3\pmod 5 & & F_{5k+2}=F_7\equiv 3\pmod 5
\\
k=2 & & F_{5k+1}=F_{11}\equiv 4\pmod 5 & & F_{5k+2}=F_{12}\equiv 4\pmod 5
\\
k=3 & & F_{5k+1}=F_{16}\equiv 2\pmod 5 & & F_{5k+2}=F_{17}\equiv 2\pmod 5
\\
k=4 & & F_{5k+1}=F_{21}\equiv 1\pmod 5 & & F_{5k+2}=F_{22}\equiv 1\pmod 5.
\end{array}
$$
\end{proof}

\begin{property}\label{p613}
Let $k$ be a positive integer. Then, we have
$$
\begin{array}{ccccc}
F_{5k+1}\equiv F_{5k+2}\equiv 1\pmod 5 & & if & & k\equiv 0\pmod 4,
\\
F_{5k+1}\equiv F_{5k+2}\equiv 3\pmod 5 & & if & & k\equiv 1\pmod 4,
\\
F_{5k+1}\equiv F_{5k+2}\equiv 4\pmod 5 & & if & & k\equiv 2\pmod 4,
\\
F_{5k+1}\equiv F_{5k+2}\equiv 2\pmod 5 & & if & & k\equiv 3\pmod 4.
\end{array}
$$
\end{property}

\begin{proof}
Since $F_{5k}\equiv 0\pmod 5$, using the recurrence relation of the
Fibonacci sequence, we have $F_{5k+2}=F_{5k+1}+F_{5k}\equiv
F_{5k+1}\pmod 5$. 

If $k\equiv 0\pmod 4$ and $k\geq 0$, then there exists a positive
integer $m$ such that $k=4m$. So, if $k\equiv 0\pmod 4$ and $k\geq 0$,
since $20$ is a period of the
Fibonacci sequence modulo $5$ (see Property \ref{p612}), then we have
$F_{5k+1}=F_{20m+1}\equiv F_1\equiv 1\pmod 5$.

If $k\equiv 1\pmod 4$ and $k\geq 0$, then there exists a positive
integer $m$ such that $k=4m+1$. Using Theorem \ref{t2.3}, it comes that
$$F_{5k+1}=F_{20m+6}=F_6F_{20m+1}+F_5F_{20m}.$$ So, if $k\equiv 1\pmod
4$ and $k\geq 0$, since $F_6=8\equiv
3\pmod 5$, $F_5=5\equiv 0\pmod 5$ and since $20$ is a period of the
Fibonacci sequence modulo $5$ (see Property \ref{p612}), then we
have $F_{5k+1}\equiv F_6F_1\equiv 3\pmod 5$. 

If $k\equiv 2\pmod 4$ and $k\geq 0$, then there exists a positive
integer $m$ such that $k=4m+2$. Using Theorem \ref{t2.3}, it comes that
$$F_{5k+1}=F_{20m+11}=F_{11}F_{20m+1}+F_{10}F_{20m}.$$ So, if $k\equiv
2\pmod 4$ and $k\geq 0$, since $F_{11}=89\equiv
4\pmod 5$, $F_{10}=55\equiv 0\pmod 5$ and since $20$ is a period of the
Fibonacci sequence modulo $5$ (see Property \ref{p612}), then we
have $F_{5k+1}\equiv F_{11}F_1\equiv 4\pmod 5$.

If $k\equiv 3\pmod 4$ and $k\geq 0$, then there exists a positive
integer $m$ such that $k=4m+3$. Using Theorem \ref{t2.3}, it comes that
$$F_{5k+1}=F_{20m+16}=F_{16}F_{20m+1}+F_{15}F_{20m}.$$ So, if $k\equiv
3\pmod 4$ and $k\geq 0$, since $F_{16}=987\equiv
2\pmod 5$, $F_{15}=610\equiv 0\pmod 5$ and since $20$ is a period of the
Fibonacci sequence modulo $5$ (see Property \ref{p612}), we have
$$F_{5k+1}\equiv F_{16}F_1\equiv 2\pmod 5.$$

\end{proof}

\begin{property}\label{p614}
Let $k$ be a positive integer. Then, we have
$$
\begin{array}{ccccccc}
F_{5k+3}\equiv 2\pmod 5 & & F_{5k+4}\equiv 3\pmod 5 & & if & & k\equiv 0\pmod 4,
\\
F_{5k+3}\equiv 1\pmod 5 & & F_{5k+4}\equiv 4\pmod 5 & & if & & k\equiv 1\pmod 4,
\\
F_{5k+3}\equiv 3\pmod 5 & & F_{5k+4}\equiv 2\pmod 5 & & if & & k\equiv 2\pmod 4,
\\
F_{5k+3}\equiv 4\pmod 5 & & F_{5k+4}\equiv 1\pmod 5 & & if & & k\equiv 3\pmod 4.
\end{array}
$$
\end{property}

Property \ref{p614} stems from the recurrence relation of the
Fibonacci sequence and Property \ref{p613}.

\begin{corollary}\label{c615}
The minimal period of the Fibonacci sequence modulo $5$ is $20$.
\end{corollary}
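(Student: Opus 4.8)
The plan is to combine the fact, already established in Property \ref{p612}, that $20$ is a period of the Fibonacci sequence modulo $5$ with a short argument that no smaller positive integer can be a period. First I would observe that the pair $(F_n, F_{n+1})$ reduced modulo $5$ evolves under the shift map $T(a,b) = (b, a+b)$ on $(\Z/5\Z)^2$, which is a bijection with inverse $(a,b) \mapsto (b-a, a)$. Since the state space is finite and $T$ is invertible, the sequence of states $(F_n, F_{n+1}) \bmod 5$ is purely periodic, and by the definition of period an integer $\ell$ is a period exactly when $T^{\ell}$ fixes the initial state $(F_1, F_2) = (1,1)$. It follows that the set of periods is precisely the set of multiples of the minimal period; denoting the latter by $\pi$, we conclude in particular that $\pi \mid 20$.

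Next I would reduce to a finite check. Since $\pi \mid 20$, the only candidates are the divisors $\pi \in \{1,2,4,5,10,20\}$, and it suffices to rule out each proper divisor $d \in \{1,2,4,5,10\}$ by exhibiting one of $F_{1+d}, F_{2+d}$ that is not congruent to $1$ modulo $5$. Here the explicit residues recorded earlier do all the work: for $d=1$ and $d=2$ we use $F_3 = 2 \not\equiv 1$; for $d=4$ we have $F_5 \equiv 0$ by Property \ref{p33}; for $d=5$ we have $F_6 = F_{5\cdot 1 + 1} \equiv 3$, and for $d=10$ we have $F_{11} = F_{5\cdot 2 + 1} \equiv 4$, both by Property \ref{p613}. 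In every case the defining congruence $F_{1+d} \equiv F_{2+d} \equiv 1 \pmod 5$ fails, so no proper divisor of $20$ is a period, and therefore $\pi = 20$.

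The one step that requires genuine justification, rather than mere computation, is the claim that the periods are exactly the multiples of the minimal period---equivalently, that $\pi \mid 20$. This rests on the invertibility of the shift map $T$ and hence on the pure periodicity of the state sequence; without it one could not pass from ``$20$ is a period'' to ``$20$ is a multiple of $\pi$,'' and the divisor reduction would collapse. An alternative that sidesteps this lemma entirely, and perhaps fits the computational tone of the surrounding sections better, is to tabulate $F_n \bmod 5$ for $n = 1, \ldots, 22$ using Properties \ref{p33}, \ref{p613} and \ref{p614}, and simply observe that the pair $(1,1)$ first reappears at $(F_{21}, F_{22})$; there the main obstacle is purely bookkeeping, namely ensuring the table is complete and read off correctly.
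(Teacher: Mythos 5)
Your proof is correct, but it takes a genuinely different route from the paper's. The paper only sketches its argument (``Corollary \ref{c615} stems from the Euclid division, Properties \ref{p612}, \ref{p613} and \ref{p614}''): the intended point is that Property \ref{p33} together with Properties \ref{p613} and \ref{p614} determine $F_n \bmod 5$ completely as a function of $n \bmod 20$, and inspection of that table shows $F_n\equiv F_{n+1}\equiv 1\pmod 5$ forces $n\equiv 1\pmod{20}$, so \emph{every} period is a multiple of $20$ and in particular $20$ is minimal. You instead prove the structural lemma that the set of periods is exactly the set of positive multiples of the minimal period $\pi$ (via invertibility of the shift $T(a,b)=(b,a+b)$ on $(\Z/5\Z)^2$ and pure periodicity), deduce $\pi\mid 20$ from Property \ref{p612}, and then eliminate the five proper divisors by a finite check; your cited residues ($F_3\equiv 2$, $F_5\equiv 0$, $F_6\equiv 3$, $F_{11}\equiv 4$) are all correct, so the check goes through. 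Your divisibility lemma is sound, and in fact invertibility of $T$ is not even needed: if $T^a$ and $T^b$ both fix the initial state with $a>b>0$, then so does $T^{a-b}$, so the set of periods is closed under positive differences and hence consists of the multiples of its minimum. What your approach buys is a much smaller verification (five divisors instead of a twenty-entry table) at the cost of one abstract lemma the paper never states; what the paper's approach buys is the stronger conclusion that the set of periods is precisely the set of positive multiples of $20$, read off directly from Properties \ref{p613} and \ref{p614} without any appeal to the algebraic structure of the period set.
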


Corollary \ref{c615} stems from the Euclid division, 
Properties \ref{p612},
\ref{p613} and \ref{p614}.

\begin{property}\label{p615}
Let $5k+1$ be a prime with $k$ a non-zero even positive integer. Then, we have
($m\in\mathbb{N}$)
$$
F_{5mk}\equiv 0\pmod {5k+1},
$$
and
$$
F_{5mk+1}\equiv 1\pmod {5k+1}.
$$
\end{property}

\begin{proof}
Let prove the property by induction on the integer $m$.

We have $F_0=0\equiv 0\pmod {5k+1}$ and $F_1=1\equiv 1\pmod {5k+1}$.

Moreover, from Properties \ref{p51} and \ref{p52}, we can notice
that $$F_{5k}\equiv 0\pmod {5k+1}$$ and $$F_{5k+1}\equiv 1\pmod {5k+1}.$$

Let assume that for a positive integer $m$, we have $$F_{5mk}\equiv 0\pmod {5k+1}$$
and $$F_{5mk+1}\equiv 1\pmod {5k+1}.$$ Then, using the assumption, Theorem
\ref{t2.3} and Properties \ref{p51} and \ref{p52}, we have
$$
F_{5(m+1)k}=F_{5mk+5k}=F_{5mk}F_{5k+1}+F_{5mk-1}F_{5k}\equiv 0\pmod {5k+1}
$$
and
$$
F_{5(m+1)k+1}=F_{5mk+1+5k}=F_{5mk+1}F_{5k+1}+F_{5mk}F_{5k}\equiv 1\pmod {5k+1}.
$$
This completes the proof by induction on the integer $m$.
\end{proof}

\begin{property}\label{p617}
A period of the Fibonacci sequence modulo $5k+1$ with $5k+1$ prime is $5k$.
\end{property}

This is a direct consequence of Property
\ref{p615}.

\begin{property}\label{p618}
A period of the Fibonacci sequence modulo $5k+4$ with $5k+4$ prime and
$k$ a non-zero odd positive integer is $5k+3$.
\end{property}

\begin{proof}
From Properties \ref{p51}, \ref{p52} and \ref{p54}, we have
$$
F_{5k+3}\equiv 0\pmod {5k+4},
$$
and
$$
F_{5k+4}\equiv F_{5k+5}\equiv 1\pmod {5k+4}.
$$
So $$F_{1+5k+3}\equiv F_{2+5k+3}\equiv 1\pmod {5k+4}.$$ It results that
$5k+3$ is a period of the Fibonacci sequence modulo $5k+4$.
\end{proof}

\begin{corollary}\label{c619}
A period of the Fibonacci sequence modulo $p$ with $p$ a prime
which is not equal to $5$ is given by
$$
\ell_p=\left\{\begin{array}{ccccc}
p-1 & if & p\equiv 1\pmod 5 & or & p\equiv 4\pmod 5,
\\
2(p+1) & if & p\equiv 2\pmod 5 & or & p\equiv 3\pmod 5.
\end{array}
\right.
$$
\end{corollary}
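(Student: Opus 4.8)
The plan is to mirror the proof of Corollary \ref{c611}, but to feed in the sharper period values obtained in Properties \ref{p617} and \ref{p618} in place of the weaker values $2(p-1)$ used there for the classes $p\equiv 1,4\pmod 5$. I would begin with the Euclid division of $p$ by $5$, writing $p=5k+r$ with $0\le r<5$. Since $p\ne 5$ is prime we have $5\nmid p$, so $r\in\{1,2,3,4\}$. I would then record the parity observation that drives every cited result: because $5\equiv 1\pmod 2$ we have $5k+r\equiv k+r\pmod 2$, so for $p$ an odd prime the oddness of $5k+r$ forces $k+r$ to be odd, that is $k\equiv r+1\pmod 2$. This is exactly the hypothesis required by Properties \ref{p617}, \ref{p618} and Theorem \ref{t68}.

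The argument then splits into the four residue classes, each being a direct translation of an earlier period formula into the single variable $p$. For $r=1$, Property \ref{p617} states that $5k$ is a period modulo $5k+1=p$, and $5k=p-1$. For $r=4$, Property \ref{p618} (with $k$ odd, as forced above) states that $5k+3$ is a period modulo $5k+4=p$, and $5k+3=p-1$. For $r=2$ and $r=3$, Theorem \ref{t68} states that $2(5k+r+1)$ is a period modulo $5k+r=p$, and $2(5k+r+1)=2(p+1)$ in both cases. Collecting these four lines produces precisely the stated piecewise formula, with $\ell_p=p-1$ when $p\equiv 1,4\pmod 5$ and $\ell_p=2(p+1)$ when $p\equiv 2,3\pmod 5$.

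The one point that needs care is the boundary: the primes $p=2$ and $p=3$ correspond to $k=0$, which is excluded by the ``non-zero positive integer $k$'' hypothesis in the three cited results, and $p=2$ moreover falls outside the odd-prime framework underlying Property \ref{p51}. I expect this to be the only genuine obstacle, and I would dispose of it by direct computation. Modulo $2$ the sequence reads $0,1,1,0,1,1,\ldots$, so $6=2(p+1)$ is a period (here $2\equiv 2\pmod 5$); modulo $3$ the sequence reads $0,1,1,2,0,2,2,1,0,1,1,\ldots$, so $8=2(p+1)$ is a period (here $3\equiv 3\pmod 5$). Both agree with the formula, and everything else in the proof is a routine rewriting of $5k$, $5k+3$, and $2(5k+r+1)$ in terms of $p$.
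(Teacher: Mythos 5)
Your proposal is correct and follows essentially the same route as the paper, which derives this corollary in one line from Corollary \ref{c611} together with Properties \ref{p617} and \ref{p618} (your direct appeal to Theorem \ref{t68} for $r=2,3$ is the same content, since Corollary \ref{c611} rests on that theorem). Your explicit verification for $p=2$ and $p=3$, where $k=0$ falls outside the hypotheses of the cited results, is a small point of care that the paper silently omits.
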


Corollary \ref{c619} stems from Corollary \ref{c611} and
Properties \ref{p617} and \ref{p618}.

\begin{property}\label{p620}
Let $5k+1$ be a prime with $k$ a non-zero even positive integer. Then,
for all $m\in[[ 0,5]]$
\begin{equation}\label{n}
F_{5k-m}\equiv (-1)^{m+1}F_m\pmod {5k+1}.
\end{equation}
\end{property}

\begin{proof}
We prove this result by induction on the integer $m$.

From Properties \ref{p52} and \ref{p53}, we have
$$
F_{5k}\equiv 0\pmod {5k+1},
$$
and
$$
F_{5k-1}\equiv 1\pmod {5k+1}.
$$
So, we verify that \eqref{n} is true when $m=0$ and $m=1$. Notice that \eqref{n} is
verified when $m=5k$ since $F_0=0\equiv F_{5k}\pmod {5k+1}$.

Let us assume for an integer $m\in[ 0,5k-1]$, we have
$F_{5k-i}\equiv (-1)^{i+1}F_i\pmod {5k+1}$ with $i=0,1,\ldots,m$. 
Then, using the recurrence relation of the Fibonacci sequence, 
we have ($0\leq m\leq 5k-1$),
\begin{align}
F_{5k-m-1} &= F_{5k-m+1}-F_{5k-m}\equiv (-1)^mF_{m-1}-(-1)^{m+1}F_m\pmod {5k+1}\nonumber \\
&\equiv (-1)^m(F_{m-1}+F_m)\equiv (-1)^mF_{m+1}\pmod {5k+1}\nonumber\\
&\equiv (-1)^{m+2}F_{m+1}\pmod {5k+1}\nonumber
\end{align}
since $(-1)^2=1$. It achieves the proof of Property \ref{p620} by
induction on the integer $m$. 
\end{proof}

\begin{remark}
Property \ref{p620} implies that we can limit ourself to the
integer interval $[ 1,\frac{5k}{2}]$ (knowing that
the case $m=0$ is a trivial case) in order to search or to rule out a
value for a possible
period of the Fibonacci sequence modulo $5k+1$ with $5k+1$ prime 
(such that $k$ is a non-zero even positive integer) which is less than
$5k$. Notice that $5k$ is not in general the minimal period of the
Fibonacci sequence modulo $5k+1$ with $5k+1$ prime 
(such that $k$ is a non-zero even positive integer). Indeed, for
instance, if $5k+1=101$ (and so for $k=20$), then it can be shown by calculating
the residue of $F_m$ with $m\in[ 1,50]$ modulo
$5k+1=101$, that the minimal period is $\frac{5k}{2}=50$. Notice that
in some cases as for instance $k=56,84$, the number $k$ is the minimal period
of the Fibonacci sequence modulo $5k+1$ with $5k+1$ prime.
\end{remark}

\begin{theorem}
Let $5k+1$ be a prime with $k$ a non-zero even positive integer. If
$k\equiv 0\pmod 4$, then $F_{\frac{5k}{2}}\equiv 0\pmod {5k+1}$.
\end{theorem}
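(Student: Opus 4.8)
The plan is to read off the conclusion from the reflection symmetry recorded in Property \ref{p620}, evaluated at the unique fixed point of the involution $m \mapsto 5k - m$ on the index range $[0,5k]$, namely the midpoint $m = \frac{5k}{2}$. Since $k \equiv 0 \pmod 4$, I write $k = 4j$ with $j$ a positive integer; then $\frac{5k}{2} = 10j$ is a bona fide integer lying in $[0,5k]$, and, decisively, it is \emph{even}. This parity is precisely what the stronger hypothesis $k \equiv 0 \pmod 4$ (rather than merely $k$ even) provides: under $k \equiv 2 \pmod 4$ the midpoint would be $10j+5$, which is odd, and the argument below would degenerate into a triviality.

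First I would substitute $m = \frac{5k}{2}$ into the congruence \eqref{n} of Property \ref{p620}. Because $5k - \frac{5k}{2} = \frac{5k}{2}$, both sides carry the same Fibonacci index, so the formula becomes self-referential:
$$
F_{\frac{5k}{2}} \equiv (-1)^{\frac{5k}{2}+1} F_{\frac{5k}{2}} \pmod{5k+1}.
$$
As $\frac{5k}{2} = 10j$ is even, the sign $(-1)^{\frac{5k}{2}+1}$ equals $-1$, whence the relation collapses to $F_{\frac{5k}{2}} \equiv - F_{\frac{5k}{2}} \pmod{5k+1}$, that is,
$$
2\,F_{\frac{5k}{2}} \equiv 0 \pmod{5k+1}.
$$

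It then remains only to remove the factor $2$. Since $k \geq 4$, the number $5k+1$ is an odd prime exceeding $2$, so it does not divide $2$; by Lemma \ref{euclid} it must therefore divide $F_{\frac{5k}{2}}$, giving $F_{\frac{5k}{2}} \equiv 0 \pmod{5k+1}$ as asserted. In truth there is no hard step here: the whole argument is the observation that the reflection formula of Property \ref{p620} is its own reflection at the midpoint, together with the sign computation that ties the outcome to the residue of $k$ modulo $4$. The one place inviting error is the parity bookkeeping separating $k \equiv 0 \pmod 4$ from $k \equiv 2 \pmod 4$, which is exactly the hinge of the statement.
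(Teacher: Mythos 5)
Your proof is correct and is essentially the paper's own argument: both substitute $m=\frac{5k}{2}$ into Property \ref{p620}, use the evenness of $\frac{5k}{2}$ (guaranteed by $k\equiv 0\pmod 4$) to get $F_{\frac{5k}{2}}\equiv -F_{\frac{5k}{2}}\pmod{5k+1}$, and cancel the factor $2$ using that $5k+1$ is an odd prime. Your additional remarks on the parity bookkeeping only make explicit what the paper leaves implicit.
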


\begin{proof}

If $k$ is a non-zero positive integer such that $k\equiv 0\pmod 4$, then the
integer $\frac{5k}{2}$ is a non-zero even positive integer. Using
Property \ref{p620} and taking $m=\frac{5k}{2}$, we have
$$
F_{\frac{5k}{2}}\equiv -F_{\frac{5k}{2}}\pmod {5k+1},
$$
and
$$
2F_{\frac{5k}{2}}\equiv 0\pmod {5k+1}.
$$
Since $2$ and $5k+1$ with $5k+1$ prime are relatively prime, we get
$$
F_{\frac{5k}{2}}\equiv 0\pmod {5k+1}.
$$
\end{proof}

\begin{remark}
We can observe that
$$
F_{5k-1}=F_{5k+1}-F_{5k}\equiv 1-5k\equiv 3\equiv F_4\pmod {5k+2},
$$
$$
F_{5k-2}=F_{5k}-F_{5k-1}\equiv 5k-3\equiv 5k-F_4\pmod {5k+2},
$$
and
$$
F_{5k-3}=F_{5k-1}-F_{5k-2}\equiv 6-5k\equiv 8\equiv F_6\pmod {5k+2},
$$
$$
F_{5k-4}=F_{5k-2}-F_{5k-3}\equiv 5k-11\equiv 5k-(F_4+F_6)\pmod {5k+2}.
$$

\end{remark}

Using induction we can show the following two properties.

\begin{property}
Let $5k+2$ be a prime with $k$ odd. Then, we have
$$
F_{5k-(2l+1)}\equiv F_{2(l+2)}\pmod {5k+2}
$$
with $l$ a positive integer such that 
$l\leq\lfloor\frac{5k-1}{2}\rfloor$.
\end{property}

\begin{property}

Let $5k+2$ be a prime with $k$ odd. Then, we have
$$
F_{5k-2l}\equiv 5k-{\displaystyle\sum^{l-1}_{i=0}}F_{2(i+2)}\pmod {5k+2}
$$
with $l\geq 1$ such that 
$l\leq \lfloor\frac{5k}{2}\rfloor$.
\end{property}

\begin{remark}
We can notice that
$$
F_{5k+4}=F_{5k+3}+F_{5k+2}\equiv F_{5k+2}\equiv 5k+1\pmod {5k+2},
$$
$$
F_{5k+5}=F_{5k+4}+F_{5k+3}\equiv F_{5k+4}\equiv 5k+1\pmod {5k+2},
$$
and
$$
F_{5k+6}=F_{5k+5}+F_{5k+4}\equiv 10k+2\equiv 5k\pmod {5k+2}.
$$
And for $l\geq 1$ we have
\begin{align}
F_{5k+3l+2} &= F_{3l+2}F_{5k+1}+F_{3l+1}F_{5k}\equiv
F_{3l+2}+5kF_{3l+1}\pmod {5k+2}\nonumber\\
&\equiv F_{3l}+(5k+1)F_{3l+1}\equiv F_{3l}-F_{3l+1}
+(5k+2)F_{3l+1}\pmod {5k+2}\nonumber\\
&\equiv F_{3l}-F_{3l+1}\equiv -F_{3l-1}\pmod {5k+2}.\nonumber
\end{align}
Furthermore, we have for $l\geq 1$
\begin{align}
F_{5k+3l+1} &= F_{3l+1}F_{5k+1}+F_{3l}F_{5k}\equiv
F_{3l+1}+5kF_{3l}\pmod {5k+2}\nonumber\\
&\equiv F_{3l-1}+(5k+1)F_{3l}\equiv F_{3l-1}-F_{3l}
+(5k+2)F_{3l}\pmod {5k+2}\nonumber \\
&\equiv F_{3l-1}-F_{3l}\equiv -F_{3l-2}\pmod {5k+2}.\nonumber
\end{align}

Besides, we have for $l\geq 1$
\begin{align}
F_{5k+3l} &= F_{3l}F_{5k+1}+F_{3l-1}F_{5k}\equiv
F_{3l}+5kF_{3l-1}\pmod {5k+2}\nonumber \\
&\equiv F_{3l-2}+(5k+1)F_{3l-1}\equiv F_{3l-2}-F_{3l-1}
+(5k+2)F_{3l-1}\pmod {5k+2}\nonumber \\
&\equiv F_{3l-2}-F_{3l-1}\equiv -F_{3l-3}\pmod {5k+2}.\nonumber
\end{align}
\end{remark}

We can state the following property, the proof of which follows from
the above remark and by using induction. 
\begin{property}
 $F_{5k+n}\equiv -F_{n-3} \pmod {5k+2}$.
\end{property}

\begin{theorem}
Let $5k+2$ be a prime with $k$ an odd positive number and let $n$ a
positive integer. Then, we have
$$
F_{n(5k+3)}\equiv 0\pmod {5k+2}.
$$
\end{theorem}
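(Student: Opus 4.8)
The plan is to prove the congruence by induction on $n$, using the addition formula for Fibonacci numbers (Theorem \ref{t2.3}) together with the single observation that $F_{5k+3}\equiv 0\pmod{5k+2}$.

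First I would settle the base case $n=1$. Writing $F_{5k+3}=F_{5k+2}+F_{5k+1}$ and invoking Properties \ref{p4.1} and \ref{p4.2}, which give $F_{5k+2}\equiv 5k+1$ and $F_{5k+1}\equiv 1\pmod{5k+2}$, we obtain
$$
F_{5k+3}\equiv (5k+1)+1\equiv 5k+2\equiv 0\pmod{5k+2}.
$$
(This is exactly the computation that opens the proof of Theorem \ref{t57}.) Hence $F_{1\cdot(5k+3)}\equiv 0\pmod{5k+2}$.

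Next I would carry out the inductive step. Assuming $F_{n(5k+3)}\equiv 0\pmod{5k+2}$, I apply Theorem \ref{t2.3} with summand indices $n(5k+3)$ and $5k+3$; the second index exceeds $2$, so the hypothesis $l\geq 2$ is met, and
$$
F_{(n+1)(5k+3)}=F_{5k+3}\,F_{n(5k+3)+1}+F_{5k+2}\,F_{n(5k+3)}.
$$
Each summand on the right is divisible by $5k+2$: the first because $F_{5k+3}\equiv 0$ by the base case, and the second because its factor $F_{n(5k+3)}\equiv 0$ by the induction hypothesis. Therefore $F_{(n+1)(5k+3)}\equiv 0\pmod{5k+2}$, which closes the induction.

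I do not anticipate a genuine obstacle: the only details needing attention are that $5k+3>2$ so Theorem \ref{t2.3} is applicable, and that the base case is in hand, both immediate. The structural reason the argument succeeds is that the claim is a special case of the general divisibility relation $F_d\mid F_{nd}$ for Fibonacci numbers, here with $d=5k+3$, which the addition formula encodes directly. Alternatively, one could appeal to periodicity: by Theorem \ref{t57} the sequence has period $2(5k+3)$ modulo $5k+2$, whence $F_{n(5k+3)}$ reduces to $F_0\equiv 0$ for $n$ even and to $F_{5k+3}\equiv 0$ for $n$ odd. I prefer the inductive route, since it is self-contained and does not require promoting the defining relation of a period to the full periodicity statement $F_{j+\ell}\equiv F_j\pmod{5k+2}$.
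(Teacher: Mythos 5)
Your proof is correct and follows essentially the same route as the paper: both establish the base case via $F_{5k+3}=F_{5k+2}+F_{5k+1}\equiv(5k+1)+1\equiv 0\pmod{5k+2}$ and then induct using the decomposition $F_{(n+1)(5k+3)}=F_{5k+3}F_{n(5k+3)+1}+F_{5k+2}F_{n(5k+3)}$ from Theorem \ref{t2.3}. The only cosmetic difference is that the paper also records the trivial case $F_0\equiv 0$ before starting the induction.
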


\begin{proof}
The proof of the theorem will be done by induction. We have $F_0\equiv
0\pmod {5k+2}$. Moreover, we know that $$F_{5k+3}\equiv 0\pmod {5k+2}.$$
Let us assume that 
\begin{equation}\label{n1} 
F_{n(5k+3)}\equiv 0\pmod {5k+2}.
\end{equation}
We have
$$
F_{(n+1)(5k+3)}=F_{n(5k+3)+5k+3}=F_{5k+3}F_{n(5k+3)+1}+F_{5k+2}F_{n(5k+3)}.
$$
Since $F_{5k+3}\equiv 0\pmod {5k+2}$, using \eqref{n1}, we deduce that
$$
F_{(n+1)(5k+3)}\equiv 0\pmod {5k+2}.
$$
\end{proof}

The following follows very easily from the above theorem.

\begin{corollary}
If $5k+3|m$, then $F_m\equiv 0\pmod {5k+2}$.
\end{corollary}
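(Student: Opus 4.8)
The plan is to derive the corollary directly from the theorem immediately preceding it, which asserts that $F_{n(5k+3)}\equiv 0\pmod{5k+2}$ for every positive integer $n$ whenever $5k+2$ is prime with $k$ odd. The hypothesis $5k+3\mid m$ means precisely that $m=n(5k+3)$ for some integer $n$; since $m$ is a nonnegative Fibonacci index, we may take $n\geq 0$. The entire argument is therefore a translation of a divisibility statement into an index-multiple statement, followed by a single invocation of the theorem.

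First I would dispose of the degenerate case $n=0$, i.e.\ $m=0$. Here $F_0=0\equiv 0\pmod{5k+2}$, so the claim holds trivially; this is exactly the base case already used in the proof of the preceding theorem.

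For $n\geq 1$, I would simply substitute $m=n(5k+3)$ and apply the theorem to conclude $F_m=F_{n(5k+3)}\equiv 0\pmod{5k+2}$. No further computation is required, because the inductive heart of the matter---propagating the divisibility from $F_{n(5k+3)}$ to $F_{(n+1)(5k+3)}$ by writing $F_{(n+1)(5k+3)}=F_{5k+3}F_{n(5k+3)+1}+F_{5k+2}F_{n(5k+3)}$ via Theorem~\ref{t2.3} and using $F_{5k+3}\equiv 0\pmod{5k+2}$---has already been carried out in establishing the theorem.

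The only point requiring any attention, and hence the \emph{main obstacle}, is purely one of bookkeeping: one must observe that the divisibility hypothesis yields a nonnegative multiplier $n$, so that the theorem (stated for positive $n$ and supplemented by the trivial case $n=0$) genuinely covers every admissible index $m$. Once this is noted, the corollary is an immediate restatement of the theorem in the language of divisibility, and no additional machinery is needed.
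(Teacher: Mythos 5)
Your proposal is correct and matches the paper's intent exactly: the paper gives no explicit proof, remarking only that the corollary ``follows very easily from the above theorem,'' and your argument---writing $m=n(5k+3)$, disposing of $n=0$ via $F_0=0$, and invoking the theorem for $n\geq 1$---is precisely that easy deduction. Nothing further is needed.
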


\begin{property}\label{p629}
Let $5k+2$ be a prime with $k$ an odd positive integer. Then,
for all $m\in[[ 0,5k]]$
\begin{equation}\label{n2}
F_{5k-m}\equiv (-1)^{m+1}F_{m+3}\pmod {5k+2}.
\end{equation}
\end{property}

\begin{proof}
Let us prove Property \ref{p629} by induction on the integer $m$.

From Properties \ref{p51} and \ref{p52}, we have
$$
F_{5k+2}\equiv -1\pmod {5k+2},
$$
and
$$
F_{5k+1}\equiv 1\pmod {5k+2}.
$$
Using the recurrence relation of the Fibonacci sequence, it comes
that
$$
F_{5k}\equiv -2\pmod {5k+2},
$$
and
$$
F_{5k-1}\equiv 3\pmod {5k+2}.
$$
So, we verify \eqref{n2} is true when $m=0$ and $m=1$. 

Notice that \eqref{n2} is
verified when $m=5k$ since $F_0=0\equiv 0\pmod {5k+2}$ and
$F_{5k+3}\equiv 0\pmod {5k+2}$.

Let assume for an integer $m\in[[ 0,5k-1]]$, we have
$F_{5k-i}\equiv (-1)^{i+1}F_{i+3}\pmod {5k+2}$ with $i=0,1,\ldots,m$. 
Then, using the recurrence relation of the Fibonacci sequence, 
we have ($0\leq m\leq 5k-1$)
\begin{align}
F_{5k-m-1} &= F_{5k-m+1}-F_{5k-m}\equiv (-1)^mF_{m+2}-(-1)^{m+1}F_{m+3}\pmod {5k+2}\nonumber\\
&\equiv (-1)^m(F_{m+2}+F_{m+3})\equiv (-1)^{m+2}F_{m+4}\pmod {5k+2}\nonumber\\
&\equiv (-1)^{m+2}F_{m+4}\pmod {5k+2}\nonumber
\end{align}
since $(-1)^2=1$. It achieves the proof of Property \ref{p629} by
induction on the integer $m$. 
\end{proof}

Notice that Property \ref{p629} is also true for $m=-2,-1$.

\begin{remark}
In general, the number $2(5k+3)$ is not the minimal period of the
Fibonacci sequence modulo $5k+2$ with $5k+2$ prime such that 
$k$ an odd positive
integer. Indeed, if $k\equiv 0\pmod 3$, then in some cases as for
instance k=9,21,69,111,135,195,219, it can be verified that the
numbers $\frac{2(5k+3)}{3}$ and $\frac{4(5k+3)}{3}$ are periods of the
Fibonacci sequence modulo $5k+2$ with $5k+2$ prime. 
\end{remark}

\begin{theorem}
Let $5k+2$ be a prime number with $k$ an odd positive number. If
$k\equiv 3\pmod 4$, then $F_{\frac{5k+3}{2}}\equiv 0\pmod {5k+2}$.
\end{theorem}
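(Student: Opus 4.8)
The plan is to mirror the proof of the earlier analogous theorem (the one concluding $F_{\frac{5k}{2}}\equiv 0\pmod{5k+1}$ when $k\equiv 0\pmod 4$), replacing the role of Property \ref{p620} by Property \ref{p629}. The essential observation is that Property \ref{p629} relates $F_{5k-m}$ to $F_{m+3}$ up to a sign, so by choosing $m$ to make these two indices coincide with $\frac{5k+3}{2}$, we will obtain a relation of the form $F_{\frac{5k+3}{2}}\equiv \pm F_{\frac{5k+3}{2}}$, and the hypothesis $k\equiv 3\pmod 4$ will pin down the sign as $-1$.

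First I would recall Property \ref{p629}, which gives $F_{5k-m}\equiv (-1)^{m+1}F_{m+3}\pmod{5k+2}$ for every $m\in[0,5k]$. The idea is to solve $5k-m=\frac{5k+3}{2}$, which forces $m=\frac{5k-3}{2}$, and then to check that this same choice yields $m+3=\frac{5k-3}{2}+3=\frac{5k+3}{2}$. Because $k\equiv 3\pmod 4$ (so in particular $k$ is odd and $k\geq 3$), the numbers $\frac{5k+3}{2}$ and $m=\frac{5k-3}{2}$ are genuine integers, and $m$ lies in the admissible range $[0,5k]$, so Property \ref{p629} applies with this value of $m$. This substitution produces
$$
F_{\frac{5k+3}{2}}\equiv (-1)^{m+1}F_{\frac{5k+3}{2}}\pmod{5k+2}.
$$

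The one step requiring care is the sign. Writing $k=4j+3$, one computes $m=\frac{5k-3}{2}=\frac{5(4j+3)-3}{2}=\frac{20j+12}{2}=10j+6$, which is even; hence $m+1$ is odd and $(-1)^{m+1}=-1$. This is precisely where the hypothesis $k\equiv 3\pmod 4$ is used, and it is really the only substantive point in the argument. It follows that
$$
F_{\frac{5k+3}{2}}\equiv -F_{\frac{5k+3}{2}}\pmod{5k+2},
\qquad\text{so}\qquad
2F_{\frac{5k+3}{2}}\equiv 0\pmod{5k+2}.
$$

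Finally, since $k$ is a positive odd integer, $5k+2\geq 7$ is an odd prime, so it is relatively prime to $2$, and I can cancel the factor of $2$ to conclude $F_{\frac{5k+3}{2}}\equiv 0\pmod{5k+2}$. I do not anticipate a genuine obstacle here: the argument is a direct specialization of Property \ref{p629} together with an elementary parity computation, exactly paralleling the proof given for the $5k+1$ case.
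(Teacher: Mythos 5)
Your proof is correct and follows essentially the same route as the paper: both apply Property \ref{p629} with $m=\frac{5k-3}{2}$, use $k\equiv 3\pmod 4$ to see that this $m$ is even so the sign is $-1$, and then cancel the factor of $2$ using that $5k+2$ is an odd prime. The only cosmetic difference is that you verify the parity by writing $k=4j+3$ while the paper notes $5k-3\equiv 0\pmod 4$; these are the same computation.
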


\begin{proof}

Since $5k+2$ with $k$ an odd positive number, is prime, the numbers 
$5k\pm 3$ are non-zero even positive integers. So, the numbers
$\frac{5k\pm 3}{2}$ are
non-zero positive integers. Moreover, if $k\equiv 3\pmod 4$, then
$5k-3\equiv 12\equiv 0\pmod 4$. So, the integer $\frac{5k-3}{2}$ is
even.

Using Property \ref{p629} and taking $m=\frac{5k-3}{2}$, we have
$$
F_{\frac{5k+3}{2}}\equiv -F_{\frac{5k+3}{2}}\pmod {5k+2},
$$
or,
$$
2F_{\frac{5k+3}{2}}\equiv 0\pmod {5k+2}.
$$
Finally,
$$
F_{\frac{5k+3}{2}}\equiv 0\pmod {5k+2},
$$
since $2$ and $5k+2$ with $5k+2$ prime are relatively prime.
\end{proof}

\begin{theorem}
Let $5k+2$ be a prime number with $k$ an odd positive integer. If
$k\equiv 0\pmod 3$ and if the number $\frac{2(5k+3)}{3}$ is a period of
the Fibonacci sequence modulo $5k+2$, then the congruence
$$F_{5k+3}\equiv 0\pmod {5k+2}$$ is 
equivalent to the congruence $$F_{\frac{5k+3}{3}}\equiv 0\pmod {5k+2}$$
which is equivalent to the congruence $$F_{\frac{2(5k+3)}{3}}\equiv
0\pmod {5k+2}.$$ Moreover, if $k\equiv 0\pmod 3$ and if
$$F_{\frac{5k+3}{3}}\equiv 0\pmod {5k+2}$$, 
the number $\frac{2(5k+3)}{3}$ is a period of the Fibonacci
sequence modulo $5k+2$ if and only if $$F_{\frac{5k}{3}}\equiv -1\pmod {5k+2}.$$
\end{theorem}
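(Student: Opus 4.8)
The plan is to reduce the whole statement to the defining periodicity relation together with the doubling identities of Remark \ref{rmtm}. Since $k\equiv 0\pmod 3$ we have $3\mid 5k+3$, so the three indices $\frac{5k+3}{3}$, $\frac{2(5k+3)}{3}$ and $\frac{5k}{3}$ are integers; I abbreviate $n=\frac{5k+3}{3}$, so that $\frac{2(5k+3)}{3}=2n$, $\;5k+3=3n$ and $\frac{5k}{3}=n-1$. Two facts will be used repeatedly: first, $F_{5k+3}\equiv 0\pmod{5k+2}$, which holds because $F_{5k+3}=F_{5k+2}+F_{5k+1}\equiv(5k+1)+1\equiv 0\pmod{5k+2}$ by Properties \ref{p4.1} and \ref{p4.2}; second, a period $\ell$ forces the full shift identity $F_{j+\ell}\equiv F_j\pmod{5k+2}$ for every $j$, obtained by running the recurrence forward and backward from the two seed congruences (this is already used implicitly in Property \ref{p613}).

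For the first assertion, assume $2n$ is a period. Property \ref{p5.3} gives $F_{2n}\equiv 0\pmod{5k+2}$, which is the third congruence. Applying the shift identity with $j=n$ and $\ell=2n$, and using $n+2n=3n=5k+3$, gives
$$F_{5k+3}=F_{n+2n}\equiv F_{n}\pmod{5k+2}.$$
Thus $F_{5k+3}\equiv 0$ holds if and only if $F_{\frac{5k+3}{3}}\equiv 0$. Since $F_{5k+3}\equiv 0$ is always true, all three congruences are simultaneously valid and hence mutually equivalent.

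For the ``moreover'' part I assume $F_{n}\equiv 0\pmod{5k+2}$. By definition $2n$ is a period precisely when $F_{2n+1}\equiv F_{2n+2}\equiv 1$, equivalently when $F_{2n}\equiv 0$ and $F_{2n+1}\equiv 1\pmod{5k+2}$. The doubling identities of Remark \ref{rmtm} read $F_{2n}=F_n(F_{n+1}+F_{n-1})$ and $F_{2n+1}=F_n^2+F_{n+1}^2$; since $F_n\equiv 0$, the first is automatically $0$ and the second reduces to $F_{2n+1}\equiv F_{n+1}^2$. Moreover $F_n\equiv 0$ gives $F_{n+1}=F_n+F_{n-1}\equiv F_{n-1}=F_{\frac{5k}{3}}$. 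Hence, under the hypothesis $F_n\equiv 0$, the number $2n$ is a period if and only if $F_{\frac{5k}{3}}^{\,2}\equiv 1\pmod{5k+2}$, and by Theorem \ref{t1.1} this is equivalent to $F_{\frac{5k}{3}}\equiv \pm 1\pmod{5k+2}$.

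The main obstacle is to sharpen ``$\pm 1$'' to ``$-1$'', i.e. to exclude $F_{\frac{5k}{3}}\equiv 1$. The device I would use is this: if $F_{n-1}=F_{\frac{5k}{3}}\equiv 1$, then together with $F_n\equiv 0$ the pair $(F_n,F_{n+1})\equiv(0,1)=(F_0,F_1)$, so $n$ would itself be a period, whence its multiple $3n=5k+3$ would also be a period. But $5k+3$ is not a period, since $F_{(5k+3)+1}=F_{5k+4}=F_{5k+3}+F_{5k+2}\equiv 0+(5k+1)\equiv -1\pmod{5k+2}$, and $-1\not\equiv 1$ because $5k+2>2$. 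This contradiction rules out the value $+1$; consequently $F_{\frac{5k}{3}}^{\,2}\equiv 1$ can only happen with $F_{\frac{5k}{3}}\equiv -1$, and conversely $F_{\frac{5k}{3}}\equiv -1$ yields $F_{\frac{5k}{3}}^{\,2}\equiv 1$ and hence a period. This gives the desired equivalence and completes the plan.
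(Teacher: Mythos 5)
Your proposal is correct, but it takes a genuinely different and noticeably shorter route than the paper. The paper sets $k=3m$ and works from the factorization $F_{15m+3}=F_{5m+1}\bigl(3F_{5m}F_{5m+2}+2F_{5m+1}^2\bigr)$ obtained from the doubling formulas, then splits into cases according to which factor vanishes modulo $15m+2$; the second case is eliminated by a long contradiction argument invoking Property \ref{peqc} and an auxiliary integer $c$, and in the first case the sign of $F_{5m}$ is pinned down by deriving $F_{5m}^3\equiv -1$ from $F_{15m+2}\equiv -1$. You instead exploit the shift-invariance $F_{j+\ell}\equiv F_j$ that a period forces (legitimate from the paper's definition, via Properties \ref{p5.3} and \ref{p5.4} and the recurrence): with $n=\tfrac{5k+3}{3}$ this gives $F_{3n}\equiv F_n$ and $F_{2n}\equiv 0$ at once, so the three congruences of the first assertion are all true, hence equivalent — which is also where the paper ultimately lands, by a much longer path. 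For the ``moreover'' part your reduction to $F_{n-1}^2\equiv 1$ via $F_{2n+1}=F_n^2+F_{n+1}^2$ and $F_{n+1}\equiv F_{n-1}$ is the same computation the paper does implicitly, but your exclusion of the root $+1$ is new and cleaner: $F_{n-1}\equiv 1$ together with $F_n\equiv 0$ would make $n$ itself a period, hence $3n=5k+3$ a period, contradicting $F_{5k+4}\equiv -1\not\equiv 1\pmod{5k+2}$. This replaces the paper's cube-root argument and its entire second case. The trade-off is that your argument leans on the unstated (but easily justified) lemma that a period induces the full shift identity and that multiples of periods are periods, whereas the paper works only with the two seed congruences of its definition; both are sound.
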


\begin{proof}

If $k\equiv 0\pmod 3$ and $k$ an
odd positive integer, then there exists a non-zero positive integer
$m$ such that $k=3m$. Notice that $m$ is odd since $k$ is odd.
Since $F_{5k+3}\equiv 0\pmod {5k+2}$ with $5k+2$ prime ($k$
positive odd), we have 
also $F_{15m+3}\equiv 0\pmod {15m+2}$ with $15m+2$ prime ($m$ positive
odd). Using Theorem \ref{t2.3}, we have
$$ 
F_{15m+3}=F_{10m+2+5m+1}=F_{5m+1}F_{10m+3}+F_{5m}F_{10m+2}.
$$
Or, from Remark \ref{rmtm}, we have
$$
F_{10m+2}=F_{2(5m+1)}=F_{5m+1}(F_{5m}+F_{5m+2})=F^2_{5m+2}-F^2_{5m},
$$
and
$$
F_{10m+3}=F_{2(5m+1)+1}=F^2_{5m+1}+F^2_{5m+2}.
$$
We have also
$$
F_{10m+1}=F_{5m+5m+1}=F^2_{5m+1}+F^2_{5m}.
$$
So
\begin{align}
F_{15m+3} &= F_{5m+1}(F^2_{5m+1}+F^2_{5m+2})+F_{5m}F_{5m+1}(F_{5m}+F_{5m+2})\nonumber \\
&= F_{5m+1}(F^2_{5m+1}+F^2_{5m+2}+F^2_{5m}+F_{5m}F_{5m+2})\nonumber \\
&= F_{5m+1}(3F^2_{5m}+3F_{5m}F_{5m+1}+2F^2_{5m+1})\nonumber \\
&= F_{5m+1}(3F_{5m}F_{5m+2}+2F^2_{5m+1}).\nonumber
\end{align}

So, the congruence 
$F_{15m+3}\equiv 0\pmod {15m+2}$ with $m$ an odd positive integer such
that $15m+2$ prime is satisfied if and only if either
$$
F_{5m+1}\equiv 0\pmod {15m+2},
$$ 
or
$$
3F_{5m}F_{5m+2}\equiv -2F^2_{5m+1}\pmod {15m+2}.
$$
If $F_{5m+1}\equiv 0\pmod {15m+2}$, then from above, we have necessarily
$$F_{10m+2}\equiv 0\pmod {15m+2}.$$ Using the 
recurrence relation of the Fibonacci sequence, it implies also that
$F_{5m}\equiv F_{5m+2}\pmod {15m+2}$. Moreover, we have
$$F_{10m+3}\equiv F^2_{5m+2}\equiv F^2_{5m}\pmod {15m+2}.$$ Or, we have
$$
F_{15m+2}=F_{10m+2+5m}=F_{5m}F_{10m+3}+F_{5m-1}F_{10m+2}.
$$
Since $F_{5k+2}\equiv 5k+1\equiv -1\pmod {5k+2}$ with $5k+2$ prime
($k$ positive odd) and so if $k=3m$ such that $m$ positive odd,
$$F_{15m+2}\equiv -1\pmod {15m+2}$$ with $15m+2$ prime ($m$ positive
odd), since $$F_{10m+3}\equiv F^2_{5m}\pmod
{15m+2}$$ and $$F_{10m+2}\equiv 0\pmod {15m+2},$$ 
it implies that $$F_{5m}F_{10m+3}\equiv F^3_{5m}\equiv -1\pmod
{15m+2}.$$ We get
\begin{equation}\label{b6}
F^3_{5m}+1\equiv 0\pmod {15m+2},
\end{equation}
and
$$
(F_{5m}+1)(F^2_{5m}-F_{5m}+1)\equiv 0\pmod {15m+2}.
$$
So, either
$$
F_{5m}+1\equiv 0\pmod {15m+2}
$$
or
$$
F^2_{5m}-F_{5m}+1\equiv 0\pmod {15m+2}.
$$

If $$F_{5m+1}\equiv 0\pmod {15m+2}$$ and 
if $$F_{5m}+1\equiv 0\pmod {15m+2}$$ and so $$F_{5m}\equiv -1\pmod
{15m+2}$$, then $$F_{10m+3}\equiv 1\pmod {15m+2},$$ It results that the
number $10m+2$ is a period of the Fibonacci sequence modulo $15m+2$
with $15m+2$ prime and $m$ an odd positive integer.

If $$F_{5m+1}\equiv 0\pmod {15m+2}$$ and 
if $$F^2_{5m}-F_{5m}+1\equiv 0\pmod {15m+2}$$ and so $$F^2_{5m}\equiv
F_{5m}-1\pmod {15m+2},$$ then since $$F_{10m+3}\equiv F^2_{5m}\pmod
{15m+2},$$ $$F_{10m+3}\equiv F_{5m}-1\pmod {15m+2}.$$ Notice that in this
case, we cannot have $$F_{5m} \equiv -1\pmod {15m+2}$$ since
$3\not\equiv 0\pmod {15m+2}$ with $m$ an odd positive integer such
that $15m+2$ prime (and so $15m+2>3$). Then, let assume absurdly that
if $$F^2_{5m}-F_{5m}+1\equiv 0\pmod {15m+2},$$ then
the number $10m+2$ is a period of the Fibonacci sequence modulo $15m+2$
with $15m+2$ prime and $m$ an odd positive integer. In such a case,
$$F_{10m+3}\equiv 1\pmod {15m+2}$$ which implies that $$F_{5m}\equiv
2\pmod {15m+2}.$$ Since $$F^2_{5m}\equiv F_{5m}-1\pmod {15m+2},$$ it
gives $$4\equiv 1\pmod {15m+2}.$$ But, since $15m+2$ is a prime number
such that $m$ is an odd positive integer, we have $15m+2>4$ and so
$4\not\equiv 1\pmod {15m+2}$. So, we reach to a contradiction meaning
that if $$F^2_{5m}-F_{5m}+1\equiv 0\pmod {15m+2}$$ and so if
$$F_{5m}\not\equiv -1\pmod {15m+2},$$ the number $10m+2$ is not a period
of the Fibonacci sequence modulo $15m+2$ 
with $15m+2$ prime and $m$ an odd positive integer.

Moreover, if $$F_{5m+1}\equiv 0\pmod {15m+2}$$ and
reciprocally if the number $10m+2$ is a period
of the Fibonacci sequence modulo $15m+2$ 
with $15m+2$ prime and $m$ an odd positive integer, then
$$F_{10m+3}\equiv 1\pmod {15m+2}$$ which implies that $$F^2_{5m}\equiv
1\pmod {15m+2}.$$ So, either $$F_{5m}\equiv 1\pmod {15m+2}$$ or
$$F_{5m}\equiv -1\pmod {15m+2}.$$ Since we have \eqref{b6}, it remains only one possibility, that is to say $$F_{5m}\equiv
-1\pmod {15m+2}.$$

$\frac{2(5k+3)}{3}=10m+2$ is a period of the Fibonacci sequence, we
must have $F_{10m+3}\equiv F^2_{5m}\equiv 1\pmod {15m+2}$ in addition
to the condition $$F_{5m+1}\equiv 0\pmod {15m+2}.$$

If $$3F_{5m}F_{5m+2}\equiv -2F^2_{5m+1}\pmod {15m+2},$$ then from Property \ref{peqc}, we can find an integer $c$ such that
$$
\left\{\begin{array}{c}
F_{5m}F_{5m+2}\equiv -2c\pmod {15m+2},
\\
F^2_{5m+1}\equiv 3c\pmod {15m+2}.
\end{array}
\right.
$$
So
$$
c\equiv F^2_{5m+1}+F_{5m}F_{5m+2}\pmod {15m+2},
$$
or equivalently ($F_{5m+2}=F_{5m+1}+F_{5m}$ and $F_{10m+1}=F^2_{5m+1}+F^2_{5m}$)
\begin{align}
c &\equiv F^2_{5m+1}+F_{5m+1}F_{5m}+F^2_{5m}\pmod {15m+2}\nonumber \\
&\equiv F_{10m+1}+F_{5m+1}F_{5m}\pmod {15m+2}.\nonumber
\end{align}
So, if the number $10m+2$ with $m$ an odd positive integer 
is a period of the Fibonacci sequence modulo $15m+2$
with $15m+2$ prime, we should have $$F_{10m+2}\equiv 0\pmod {15m+2}$$
and $$F_{10m+1}\equiv F_{10m+3}\equiv 1\pmod {15m+2}.$$ Since
$$F_{10m+2}=F^2_{5m+2}-F^2_{5m}$$ and $$c\equiv
F_{10m+1}+F_{5m+1}F_{5m}\pmod {15m+2},$$ it implies that
$$F^2_{5m}\equiv F^2_{5m+2}\pmod {15m+2}$$ and $$c\equiv
1+F_{5m}F_{5m+1}\pmod {15m+2}.$$ So, either $$F_{5m}\equiv F_{5m+2}\pmod
{15m+2}$$ or $$F_{5m}\equiv -F_{5m+2}\pmod {15m+2}.$$ If $$F_{5m}\equiv
F_{5m+2}\pmod {15m+2},$$ then $$F_{5m+1}\equiv 0\pmod {15m+2}$$ and
$$c\equiv 1\equiv 0\pmod {15m+2}$$ where we used the fact that $$3c\equiv
F^2_{5m+1}\pmod {15m+2}$$ and $(3,15m+2)=1$ with $15m+2$ prime. But,
$1\not\equiv 0\pmod 
{15m+2}$. So, we reach a contradiction meaning that 
this case is not possible. Otherwise, if $$F_{5m}\equiv
-F_{5m+2}\pmod {15m+2},$$ then using the recurrence relation of the
Fibonacci sequence, we must have $$F_{5m+1}\equiv -2F_{5m}\pmod
{15m+2}$$ and so $$c\equiv 1-2F^2_{5m}\equiv 3F^2_{5m}\pmod {15m+2}$$
where we used the fact that $$c\equiv F^2_{5m+1}+F_{5m}F_{5m+2}\pmod {15m+2}.$$ It
implies that $$5F^2_{5m}\equiv 1\pmod {15m+2}$$ and using Theorem \ref{flt}, it gives $$F^2_{5m}\equiv 5^{15m}\equiv
6m+1\pmod {15m+2}$$ since $5^{15m+1}\equiv 1\equiv 30m+5\pmod {15m+2}$
which implies that $5^{15m}\equiv 6m+1\pmod {15m+2}$ (reccall that
$15m+2$ is prime and so $(5,15m+2)=1$). Since $$F_{5m+1}\equiv -2F_{5m}\pmod
{15m+2},$$ $$F^2_{5m+1}\equiv 3c\pmod {15m+2}$$ and $$c\equiv
3F^2_{5m}\pmod {15m+2},$$ it results that 
$$F^2_{5m+1}\equiv 4F^2_{5m}\equiv 3c\equiv 9F^2_{5m}\pmod {15m+2}$$ and
so $4(6m+1)\equiv 9(6m+1)\pmod {15m+2}$. Since $4(6m+1)=24m+4\equiv
9m+2\pmod {15m+2}$, it
implies that $45m+7\equiv 0\pmod {15m+2}$ and so $1\equiv 0\pmod
{15m+2}$ which is
not possible since $1\not\equiv 0\pmod {15m+2}$. So, we obtain again a
contradiction meaning that this latter case is not also possible.

Therefore, when $5k+2=15m+2$ is prime with $k=3m$ and $m$ an odd positive
integer, if $10m+2$ is a period of the Fibonacci sequence modulo
$15m+2$ with $15m+2$ prime, then $$F_{15m+3}\equiv 0\pmod {15m+2}$$ if and only if
$$F_{5m+1}\equiv 0\pmod {15m+2}.$$ Since $$F_{15m+3}=F_{5k+3}\equiv
0\pmod {5k+2}$$ is true when $5k+2$ is prime, we deduce that 
$$F_{\frac{5k+3}{3}}\equiv 0\pmod {5k+2}$$ is also true when $k\equiv
0\pmod 3$ and $5k+2$ prime. 

Thus, if $10m+2$ is a period of the Fibonacci sequence modulo
$15m+2$ with $15m+2$ prime, then we have
$$
F_{15m+3}\equiv 0\pmod {15m+2}$$
if and only if
$$
F_{5m+1}\equiv 0\pmod {15m+2}$$
if and only if
$$
F_{5m}\equiv F_{5m+2}\pmod {15m+2}.
$$
Besides, $$F_{5m+1}\equiv 0\pmod {15m+2}$$ implies that $$F_{10m+2}\equiv
0\pmod {15m+2}.$$ 

Reciprocally, if $$F_{10m+2}\equiv 0\pmod {15m+2},$$
then $$F^2_{5m}\equiv F^2_{5m+2}\pmod {15m+2}.$$ So, either
$$F_{5m}\equiv F_{5m+2}\pmod {15m+2}$$ or $$F_{5m}\equiv -F_{5m+2}\pmod
{15m+2}.$$ If $$F_{5m}\equiv -F_{5m+2}\pmod {15m+2},$$ then
$$F_{5m+1}\equiv -2F_{5m}\pmod {15m+2}$$ and since $$F_{5m+1}\equiv
0\pmod {15m+2},$$ using the fact that $(2,15m+2)=1$ with $15m+2$ prime
such that $m$ an odd positive integer ($15m+2>2$), 
$$F_{5m}\equiv 0\pmod {15m+2}.$$ But, then, if
$$F_{10m+2}\equiv 0\pmod {15m+2},$$ we have
$$F_{15m+2}\equiv F_{10m+3}F_{5m}\equiv 0\pmod {15m+2}.$$ Or,
$$F_{15m+2}\equiv -1\pmod {15m+2}.$$ It leads to a contradiction meaning
that $$F_{5m}\equiv -F_{5m+2}\pmod {15m+2}$$ is not possible. So, if
$$F_{10m+2}\equiv 0\pmod {15m+2},$$ there
is only one possibility, that is to say $$F_{5m}\equiv F_{5m+2}\pmod
{15m+2}$$ which implies the congruence $$F_{5m+1}\equiv 0\pmod {15m+2}$$
and so which translates the congruence $$F_{15m+2}\equiv -1\pmod {15m+2}$$
into the congruence $$F^3_{5m}\equiv -1\pmod {15m+2}$$ which has at
least one solution. So, if $10m+2$ is a period of the Fibonacci sequence modulo
$15m+2$ with $15m+2$ prime, then we have
$$
F_{15m+3}\equiv 0\pmod {15m+2}$$
if and only if
$$
F_{5m+1}\equiv 0\pmod {15m+2}
$$
if and only if
$$
F_{5m}\equiv F_{5m+2}\pmod {15m+2}.
$$
if and only if
$$
F_{10m+2}\equiv 0\pmod {15m+2}.
$$

Since $10m+2=2(5m+1)=\frac{2(5k+3)}{3}$ with $k=3m$ and $m$ an odd positive
integer, from above, we conclude that the number
$\frac{2(5k+3)}{3}$ is a period of the Fibonacci 
sequence modulo $5k+2$ if and only if $$F_{\frac{5k}{3}}\equiv -1\pmod {5k+2}.$$

\end{proof}

\begin{property}\label{p632}
Let $5k+3$ be a prime with $k$ an even positive integer. Then,
for all $m\in[[ 0,5k]]$
\begin{equation}\label{b7}
F_{5k-m}\equiv (-1)^mF_{m+4}\pmod {5k+3}.
\end{equation}
\end{property}

\begin{proof}
Let us prove Property \ref{p632} by induction on the integer $m$.

From Properties \ref{p51} and \ref{p52}, we have
$$
F_{5k+3}\equiv -1\pmod {5k+3},
$$
and
$$
F_{5k+2}\equiv 1\pmod {5k+3}.
$$
Using the recurrence relation of the Fibonacci sequence, it comes
that
$$
F_{5k+1}\equiv -2\pmod {5k+3},
$$
$$
F_{5k}\equiv 3\pmod {5k+3},
$$
and
$$
F_{5k-1}\equiv -5\pmod {5k+3}.
$$
So, we verify \eqref{b7} is true when $m=0$ and $m=1$. 

Notice that \eqref{b7} is
verified when $m=5k$ since $F_0=0\equiv 0\pmod {5k+3}$ and
$F_{5k+4}\equiv 0\pmod {5k+3}$.

Let us assume for an integer $m\in[[ 0,5k-1]]$, we have
$F_{5k-i}\equiv (-1)^iF_{i+4}\pmod {5k+3}$ with $i=0,1,\ldots,m$. 
Then, using the recurrence relation of the Fibonacci sequence, 
we have ($0\leq m\leq 5k-1$)
\begin{align}
F_{5k-m-1} &= F_{5k-m+1}-F_{5k-m}\equiv (-1)^{m-1}F_{m+3}-(-1)^mF_{m+4}\pmod {5k+3}\nonumber \\
&\equiv (-1)^{m-1}(F_{m+3}+F_{m+4})\equiv (-1)^2(-1)^{m-1}F_{m+5}\pmod {5k+3}\nonumber \\
&\equiv (-1)^{m+1}F_{m+5}\pmod {5k+3}\nonumber
\end{align}
since $(-1)^2=1$. It achieves the proof of Property \ref{p632} by
induction on the integer $m$. 
\end{proof}

Notice that Property \ref{p632} is also true for $m=-3,-2,-1$.

\begin{remark}
It can be noticed that for $k=0$, $5k+3=3$ is prime and it
can be verified that $2(5k+4)=8$ for $k=0$ is the minimal period of
the Fibonacci sequence modulo $3$. Nevertheless,
in general, the number $2(5k+4)$ is not the minimal period of the
Fibonacci sequence modulo $5k+3$ with $5k+3$ prime such that 
$k$ an even positive integer.
Indeed, if $k\equiv 1\pmod 3$ and $k$ an even positive integer, 
then in some cases as for
instance k=22,52,70,112,148,244, it can be verified that the
numbers $\frac{2(5k+4)}{3}$ and $\frac{4(5k+4)}{3}$ are periods of the
Fibonacci sequence modulo $5k+3$ with $5k+3$ prime. 
\end{remark}

\begin{theorem}
Let $5k+3$ be a prime number with $k$ a non-zero even positive number. If
$k\equiv 2\pmod 4$, then $$F_{\frac{5k+4}{2}}\equiv 0\pmod {5k+3}.$$
\end{theorem}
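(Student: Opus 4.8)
The plan is to imitate the proofs of the two analogous theorems (for the moduli $5k+1$ and $5k+2$), replacing Properties \ref{p620} and \ref{p629} by the reflection formula of Property \ref{p632}. The whole argument rests on a single well-chosen substitution into \eqref{b7}, after which the conclusion drops out by cancelling a factor of $2$.

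First I would dispose of the arithmetic preliminaries. Since $k$ is even, $5k+3$ is odd; being prime, it is therefore coprime to $2$, which is exactly what is needed at the end. Moreover $5k+4$ is even, so $\frac{5k+4}{2}$ is a bona fide positive integer, and likewise $5k-4$ is even with $0\le \frac{5k-4}{2}\le 5k$ (using $k\ge 2$), so Property \ref{p632} is legitimately applicable at $m=\frac{5k-4}{2}$.

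The heart of the matter is to set $m=\frac{5k-4}{2}$ in \eqref{b7}. With this choice both indices collapse to the same value, since $5k-m=\frac{5k+4}{2}$ and $m+4=\frac{5k+4}{2}$, so Property \ref{p632} gives
$$
F_{\frac{5k+4}{2}}\equiv (-1)^{\frac{5k-4}{2}}F_{\frac{5k+4}{2}}\pmod{5k+3}.
$$
The one point to verify is the parity of the exponent. As $5\equiv 1\pmod 4$ we have $5k\equiv k\pmod 4$, so the hypothesis $k\equiv 2\pmod 4$ yields $5k-4\equiv 2\pmod 4$; hence $\frac{5k-4}{2}$ is odd and $(-1)^{\frac{5k-4}{2}}=-1$.

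Consequently $F_{\frac{5k+4}{2}}\equiv -F_{\frac{5k+4}{2}}\pmod{5k+3}$, i.e.\ $2F_{\frac{5k+4}{2}}\equiv 0\pmod{5k+3}$, and cancelling the $2$ (legitimate since $\gcd(2,5k+3)=1$) gives $F_{\frac{5k+4}{2}}\equiv 0\pmod{5k+3}$. I expect no genuine obstacle here: the only real choices are picking the correct index $m$ and checking the parity condition, both of which are forced by the twin requirements that the two subscripts in \eqref{b7} agree and that the resulting sign be $-1$.
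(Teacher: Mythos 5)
Your proof is correct and follows essentially the same route as the paper: substitute $m=\frac{5k-4}{2}$ into the reflection formula \eqref{b7} of Property \ref{p632}, note that $k\equiv 2\pmod 4$ forces $\frac{5k-4}{2}$ to be odd so the sign is $-1$, and cancel the factor of $2$ using that $5k+3$ is an odd prime. No discrepancies to report.
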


\begin{proof}

Since $5k+3$ with $k$ a non-zero even positive number, is prime, the numbers 
$5k\pm 4$ are non-zero even positive integers. So, the numbers
$\frac{5k\pm 4}{2}$ are
non-zero positive integers. Moreover, if $k\equiv 2\pmod 4$, then
$5k-4\equiv 2\pmod 4$. So, the integer $\frac{5k-4}{2}$ is odd.

Using Property \ref{p632} and taking $m=\frac{5k-4}{2}$, it gives
$$
F_{\frac{5k+4}{2}}\equiv -F_{\frac{5k+4}{2}}\pmod {5k+3},
$$
or,
$$
2F_{\frac{5k+4}{2}}\equiv 0\pmod {5k+3},
$$
and finally,
$$
F_{\frac{5k+4}{2}}\equiv 0\pmod {5k+3},
$$
since $2$ and $5k+3$ with $5k+3$ prime are relatively prime.
\end{proof}

\begin{theorem}
Let $5k+3$ be a prime number with $k$ an even positive integer. If
$k\equiv 1\pmod 3$ and if $\frac{2(5k+4)}{3}$ is a period of the Fibonacci
sequence modulo $5k+3$, the congruence $$F_{5k+4}\equiv 0\pmod {5k+3}$$ is
equivalent to the congruence $$F_{\frac{5k+4}{3}}\equiv 0\pmod {5k+3}$$
which is equivalent to the congruence $$F_{\frac{2(5k+4)}{3}}\equiv 0\pmod {5k+3}$$
Moreover, if $k\equiv 1\pmod 3$ and if 
$$F_{\frac{5k+4}{3}}\equiv 0\pmod {5k+3},$$ then 
the number $\frac{2(5k+4)}{3}$ is a period of the Fibonacci
sequence modulo $5k+3$ if and only if $$F_{\frac{5k+1}{3}}\equiv -1\pmod {5k+3}.$$
\end{theorem}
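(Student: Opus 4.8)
The plan is to follow, step for step, the proof of the preceding theorem (the $5k+2$ analogue), after a substitution that aligns the three distinguished indices with the quantities treated there. Since $k$ is even and $k\equiv 1\pmod 3$, I would set $k=3m+1$; evenness of $k$ forces $m$ to be odd, and the prime becomes $p=5k+3=15m+8$. The three indices in the statement then read $\frac{5k+4}{3}=5m+3$, $\frac{5k+1}{3}=5m+2$ and $\frac{2(5k+4)}{3}=10m+6$, so the theorem asserts, for $p=15m+8$ prime with $m$ odd, that $F_{15m+9}\equiv 0$, $F_{5m+3}\equiv 0$ and $F_{10m+6}\equiv 0$ modulo $p$ are equivalent provided $10m+6$ is a period, and that $10m+6$ is a period if and only if $F_{5m+2}\equiv -1\pmod p$.

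The engine is the identity $F_{3A}=F_A\bigl(3F_{A-1}F_{A+1}+2F_A^2\bigr)$, which I would obtain from Theorem \ref{t2.3} together with the squaring formulas of Remark \ref{rmtm}, exactly as in the $5k+2$ case. With $A=5m+3$ it reads $F_{15m+9}=F_{5m+3}\bigl(3F_{5m+2}F_{5m+4}+2F_{5m+3}^2\bigr)$, and Property \ref{p54} (with $r=3$) guarantees $F_{5k+4}=F_{15m+9}\equiv 0\pmod p$. Since $p$ is prime, either $F_{5m+3}\equiv 0$ or the second factor vanishes; the crux of the first assertion is that a period $10m+6$ excludes the second factor, so that $F_{5m+3}\equiv 0$ (the elimination is the obstacle discussed last). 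Granting it, $F_{10m+6}=F_{2(5m+3)}=F_{5m+3}(F_{5m+2}+F_{5m+4})$ gives $F_{10m+6}\equiv 0$ and $F_{5m+4}\equiv F_{5m+2}$, producing the chain $F_{5m+3}\equiv 0\iff F_{5m+2}\equiv F_{5m+4}\iff F_{10m+6}\equiv 0$.

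For the \emph{moreover} part I would work inside the branch $F_{5m+3}\equiv 0$. Combining $F_{10m+6}\equiv 0$ with $F_{10m+7}=F_{5m+3}^2+F_{5m+4}^2\equiv F_{5m+2}^2$ and the decomposition $F_{15m+8}=F_{5m+2}F_{10m+7}+F_{5m+1}F_{10m+6}$ from Theorem \ref{t2.3}, and using $F_{5k+3}=F_{15m+8}\equiv -1\pmod p$ from Property \ref{p51}, reduces matters to the cubic $F_{5m+2}^3+1\equiv 0$, i.e. $(F_{5m+2}+1)(F_{5m+2}^2-F_{5m+2}+1)\equiv 0\pmod p$. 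A period $10m+6$ is equivalent to $F_{10m+6}\equiv 0$ together with $F_{10m+7}\equiv 1$, and here $F_{10m+7}\equiv F_{5m+2}^2$, so periodicity forces $F_{5m+2}^2\equiv 1$, whence $F_{5m+2}\equiv\pm 1$ by Theorem \ref{t1.1}; the cubic rules out $F_{5m+2}\equiv 1$ (which would give $2\equiv 0\pmod p$) and leaves $F_{5m+2}\equiv -1$, that is $F_{\frac{5k+1}{3}}\equiv -1\pmod{5k+3}$. Conversely $F_{5m+2}\equiv -1$ gives $F_{10m+7}\equiv 1$, hence the period.

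The hard part, as in the $5k+2$ theorem, is to show that the alternative $3F_{5m+2}F_{5m+4}+2F_{5m+3}^2\equiv 0$ (with $F_{5m+3}\not\equiv 0$) cannot coexist with $10m+6$ being a period. I would invoke Property \ref{peqc} to produce an integer $c$ with $F_{5m+2}F_{5m+4}\equiv -2c$ and $F_{5m+3}^2\equiv 3c\pmod p$, translate the periodicity requirements $F_{10m+6}\equiv 0$ and $F_{10m+7}\equiv 1$ into relations among $F_{5m+2},F_{5m+3},F_{5m+4}$, and reach a numerical impossibility of the shape $5F_{5m+2}^2\equiv 1\pmod p$. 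Combined with Theorem \ref{flt}, which via $5^{15m+7}\equiv 1$ pins down $5^{-1}\equiv 9m+5\pmod{15m+8}$, this yields a congruence that $p=15m+8>4$ renders absurd, exactly as the relation $45m+7\equiv 0$ was dismissed before. Eliminating this branch secures $F_{5m+3}\equiv 0$ under the period hypothesis, hence the first assertion, and undoing the substitution $k=3m+1$ completes the proof.
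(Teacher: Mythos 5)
Your proposal follows essentially the same route as the paper's proof: the substitution $k=3m+1$ with $m$ odd, the factorization $F_{15m+9}=F_{5m+3}\bigl(3F_{5m+2}F_{5m+4}+2F^2_{5m+3}\bigr)$ built from Theorem \ref{t2.3} and Remark \ref{rmtm}, the cubic $F^3_{5m+2}\equiv -1\pmod{15m+8}$ for the \emph{moreover} part, and the elimination of the alternative branch via Property \ref{peqc}, the relation $5F^2_{5m+2}\equiv 1$, and Fermat's Little Theorem giving $5^{-1}\equiv 9m+5\pmod{15m+8}$ (the paper's final absurdity being $75m+41\equiv 0$, the analogue of the $45m+7\equiv 0$ you cite). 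The argument is sound and matches the paper's in all essential steps.
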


\begin{proof}

If $k\equiv 1\pmod 3$ and $k$ an
even positive integer, then there exists a non-zero positive integer
$m$ such that $k=3m+1$. Notice that $m$ is odd since $k$ is even.
Since $$F_{5k+4}\equiv 0\pmod {5k+3}$$ with $5k+3$ prime ($k$
positive even), we have 
also $$F_{15m+9}\equiv 0\pmod {15m+8}$$ with $15m+8$ prime ($m$ positive
odd). Using Theorem \ref{t2.3}, we have
\begin{align}
F_{15m+9} &= F_{3(5m+3)}=F_{2(5m+3)+5m+3}=F_{5m+3}F_{2(5m+3)+1}+F_{5m+2}F_{2(5m+3)}\nonumber\\
&= F_{5m+3}F_{10m+7}+F_{5m+2}F_{10m+6}.\nonumber
\end{align}
Or, from Remark \ref{rmtm}, we have
$$
F_{10m+6}=F_{2(5m+3)}=F_{5m+3}(F_{5m+4}+F_{5m+2})=F^2_{5m+4}-F^2_{5m+2},
$$
and
$$
F_{10m+7}=F_{2(5m+3)+1}=F^2_{5m+3}+F^2_{5m+4}.
$$
We have also
$$
F_{10m+5}=F_{5m+2+5m+3}=F^2_{5m+3}+F^2_{5m+2}.
$$
So
\begin{align}
F_{15m+9} &= F_{5m+3}(F^2_{5m+3}+F^2_{5m+4})+F_{5m+2}F_{5m+3}(F_{5m+4}+F_{5m+2})\nonumber\\
&= F_{5m+3}(F^2_{5m+3}+F^2_{5m+4}+F^2_{5m+2}+F_{5m+2}F_{5m+4})\nonumber \\
&= F_{5m+3}(3F^2_{5m+2}+3F_{5m+2}F_{5m+3}+2F^2_{5m+3})\nonumber \\
&= F_{5m+3}(3F_{5m+2}F_{5m+4}+2F^2_{5m+3}).\nonumber
\end{align}

So, the congruence 
$$F_{15m+9}\equiv 0\pmod {15m+8}$$ with $m$ an odd positive integer such
that $15m+8$ prime is satisfied if and only if either
$$
F_{5m+3}\equiv 0\pmod {15m+8},
$$ 
or
$$
3F_{5m+2}F_{5m+4}\equiv -2F^2_{5m+3}\pmod {15m+8}.
$$
If $$F_{5m+3}\equiv 0\pmod {15m+8},$$ then from above, we have necessarily
$$F_{10m+6}\equiv 0\pmod {15m+8}.$$ Using the 
recurrence relation of the Fibonacci sequence, it implies also that
$$F_{5m+2}\equiv F_{5m+4}\pmod {15m+8}.$$ Moreover, we have
$$F_{10m+7}\equiv F^2_{5m+4}\equiv F^2_{5m+2}\pmod {15m+8}.$$ Or we have,
$$
F_{15m+8}=F_{10m+6+5m+2}=F_{5m+2}F_{10m+7}+F_{5m+1}F_{10m+6}.
$$ 

Since $$F_{5k+3}\equiv 5k+2\equiv -1\pmod {5k+3}$$ with $5k+3$ prime
($k$ positive even) and so if $k=3m+1$ such that $m$ positive odd,
$$F_{15m+8}\equiv -1\pmod {15m+8}$$ with $15m+8$ prime ($m$ positive
odd), since $$F_{10m+7}\equiv F^2_{5m+2}\pmod
{15m+8}$$ and $$F_{10m+6}\equiv 0\pmod {15m+8},$$ 
it implies that $$F_{5m+2}F_{10m+7}\equiv F^3_{5m+2}\equiv -1\pmod
{15m+8}.$$ It comes that
$$
F^3_{5m+2}+1\equiv 0\pmod {15m+8},
$$
or,
$$
(F_{5m+2}+1)(F^2_{5m+2}-F_{5m+2}+1)\equiv 0\pmod {15m+8}.
$$
So, either
$$
F_{5m+2}+1\equiv 0\pmod {15m+8},
$$
or
$$
F^2_{5m+2}-F_{5m+2}+1\equiv 0\pmod {15m+8}.
$$

If $$F_{5m+3}\equiv 0\pmod {15m+8}$$ and 
if $$F_{5m+2}+1\equiv 0\pmod {15m+8}$$ and so $$F_{5m+2}\equiv -1\pmod
{15m+8},$$ then $$F_{10m+7}\equiv 1\pmod {15m+8}.$$ It results that the
number $10m+6$ is a period of the Fibonacci sequence modulo $15m+8$
with $15m+8$ prime and $m$ an odd positive integer.

If $$F_{5m+3}\equiv 0\pmod {15m+8}$$ and 
if $$F^2_{5m+2}-F_{5m+2}+1\equiv 0\pmod {15m+8}$$ and so $$F^2_{5m+2}\equiv
F_{5m+2}-1\pmod {15m+8},$$ then since $$F_{10m+7}\equiv F^2_{5m+2}\pmod
{15m+8},$$ $$F_{10m+7}\equiv F_{5m+2}-1\pmod {15m+8}.$$ Notice that in this
case, we cannot have $$F_{5m+2}\equiv -1\pmod {15m+8}$$ since
$3\not\equiv 0\pmod {15m+8}$ with $m$ an odd positive integer such
that $15m+8$ prime (and so $15m+8>3$). Then, let us assume absurdly that
if $$F^2_{5m+2}-F_{5m+2}+1\equiv 0\pmod {15m+8},$$ then
the number $10m+6$ is a period of the Fibonacci sequence modulo $15m+8$
with $15m+8$ prime and $m$ an odd positive integer. In such a case,
$$F_{10m+7}\equiv 1\pmod {15m+8}$$ which implies that $$F_{5m+2}\equiv
2\pmod {15m+8}.$$ Since $$F^2_{5m+2}\equiv F_{5m+2}-1\pmod {15m+8},$$ it
gives $4\equiv 1\pmod {15m+8}$. But, since $15m+8$ is a prime number
such that $m$ is an odd positive integer, we have $15m+8>4$ and so
$4\not\equiv 1\pmod {15m+8}$. So, we reach to a contradiction meaning
that if $$F^2_{5m+2}-F_{5m+2}+1\equiv 0\pmod {15m+8}$$ and so if
$$F_{5m+2}\not\equiv -1\pmod {15m+8},$$ the number $10m+6$ is not a period
of the Fibonacci sequence modulo $15m+8$ 
with $15m+8$ prime and $m$ an odd positive integer.

Moreover, if $$F_{5m+3}\equiv 0\pmod {15m+8}$$ and
reciprocally if the number $10m+6$ is a period
of the Fibonacci sequence modulo $15m+8$ 
with $15m+8$ prime and $m$ an odd positive integer, then
$$F_{10m+7}\equiv 1\pmod {15m+8}$$ which implies that $$F^2_{5m+2}\equiv
1\pmod {15m+8}.$$ So, either $$F_{5m+2}\equiv 1\pmod {15m+8}$$ or
$$F_{5m+2}\equiv -1\pmod {15m+8}.$$ Since we have also 
$$F^3_{5m+2}\equiv -1\pmod {15m+8}$$
(see above), it remains only one possibility, that is to say $$F_{5m+2}\equiv
-1\pmod {15m+8}.$$

$\frac{2(5k+4)}{3}=10m+6$ is a period of the Fibonacci sequence, we
must have $$F_{10m+7}\equiv F^2_{5m+2}\equiv 1\pmod {15m+8}$$ in addition
to the condition $$F_{5m+3}\equiv 0\pmod {15m+8}.$$ 

If $3F_{5m+2}F_{5m+4}\equiv -2F^2_{5m+3}\pmod {15m+8}$, then from Property \ref{peqc}, we can find an integer $c$ such that
$$
\left\{\begin{array}{c}
F_{5m+2}F_{5m+4}\equiv -2c\pmod {15m+8},
\\
F^2_{5m+3}\equiv 3c\pmod {15m+8}
\end{array}
\right.
$$
So
\begin{equation}\label{b0}
c\equiv F^2_{5m+3}+F_{5m+2}F_{5m+4}\pmod {15m+8},
\end{equation}
or equivalently ($F_{5m+4}=F_{5m+3}+F_{5m+2}$ and $F_{10m+5}=F^2_{5m+3}+F^2_{5m+2}$)
\begin{align}
c &\equiv F^2_{5m+3}+F_{5m+3}F_{5m+2}+F^2_{5m+2}\pmod {15m+8}\nonumber\\
&\equiv F_{10m+5}+F_{5m+3}F_{5m+2}\pmod {15m+8}.\label{b9}
\end{align}
So, if the number $10m+6$ with $m$ an odd positive integer 
is a period of the Fibonacci sequence modulo $15m+8$
with $15m+8$ prime, we should have $$F_{10m+6}\equiv 0\pmod {15m+8}$$
and $$F_{10m+5}\equiv F_{10m+7}\equiv 1\pmod {15m+8}.$$ Since
$$F_{10m+6}=F^2_{5m+4}-F^2_{5m+2}$$ and from the relations $F_{10m+6}\equiv 0 \pmod{15m+8}$ and $F_{10m+6}=F_{5m+4}^2-F_{5m+2}^2$, we have
$$F^2_{5m+2}\equiv F^2_{5m+4}\pmod {15m+8}$$ and $$c\equiv
1+F_{5m+2}F_{5m+3}\pmod {15m+8}.$$ So, either $$F_{5m+2}\equiv F_{5m+4}\pmod
{15m+8}$$ or $$F_{5m+2}\equiv -F_{5m+4}\pmod {15m+8}.$$ If $$F_{5m+2}\equiv
F_{5m+4}\pmod {15m+8},$$ then $$F_{5m+3}\equiv 0\pmod {15m+8}$$ and
$$c\equiv 1\equiv 0\pmod {15m+8}$$ where we used the fact that $$3c\equiv
F^2_{5m+3}\pmod {15m+8}$$ and $(3,15m+8)=1$ with $15m+8$ prime. But,
$1\not\equiv 0\pmod 
{15m+8}$. So, we reach a contradiction meaning that 
this case is not possible. Otherwise, if $$F_{5m+2}\equiv
-F_{5m+4}\pmod {15m+8},$$ then using the recurrence relation of the
Fibonacci sequence, we must have $$F_{5m+3}\equiv -2F_{5m+2}\pmod
{15m+8}$$ and so $$c\equiv 1-2F^2_{5m+2}\equiv 3F^2_{5m+2}\pmod {15m+8}$$
where we used \eqref{b0}. It
implies that $$5F^2_{5m+2}\equiv 1\pmod {15m+8}$$ and using Theorem \ref{flt}, it gives $$F^2_{5m+2}\equiv 5^{15m+6}\equiv
9m+5\pmod {15m+8}$$ since $$5^{15m+7}\equiv 1\equiv 45m+25\pmod {15m+8}$$
which implies that $5^{15m+6}\equiv 9m+5\pmod {15m+8}$ (reccall that
$15m+8$ is prime and so $(5,15m+8)=1$). Since $$F_{5m+3}\equiv -2F_{5m+2}\pmod
{15m+8},$$ $$F^2_{5m+3}\equiv 3c\pmod {15m+8}$$ and $$c\equiv
3F^2_{5m+2}\pmod {15m+8},$$ it results that 
$$F^2_{5m+3}\equiv 4F^2_{5m+2}\equiv 3c\equiv 9F^2_{5m+2}\pmod {15m+8}$$ and
so $4(9m+5)\equiv 9(9m+5)\pmod {15m+8}$. Since $4(9m+5)=36m+20\equiv
6m+4\pmod {15m+8}$, it
implies that $75m+41\equiv 0\pmod {15m+8}$ and so $1\equiv 0\pmod
{15m+8}$ which is
not possible since $1\not\equiv 0\pmod {15m+2}$. So, we obtain again a
contradiction meaning that this latter case is not also possible.

Therefore, when $5k+3=15m+8$ is prime with $k=3m+1$ and $m$ an odd positive
integer, if $10m+6$ is a period of the Fibonacci sequence modulo
$15m+8$ with $15m+8$ prime, $$F_{15m+9}\equiv 0\pmod {15m+8}$$ if and only if
$$F_{5m+3}\equiv 0\pmod {15m+8}.$$ Since $$F_{15m+9}=F_{5k+4}\equiv
0\pmod {5k+3}$$ is true when $5k+3$ is prime, we deduce that 
$$F_{\frac{5k+4}{3}}\equiv 0\pmod {5k+3}$$ is also true when $k\equiv
1\pmod 3$ and $5k+3$ prime. 

Thus, if $10m+6$ is a period of the Fibonacci sequence modulo
$15m+8$ with $15m+8$ prime, then we have
$$
F_{15m+9}\equiv 0\pmod {15m+8}$$
if and only if
$$
F_{5m+3}\equiv 0\pmod {15m+8}
$$
if and only if
$$
F_{5m+2}\equiv F_{5m+4}\pmod {15m+8}.
$$
Besides, $$F_{5m+3}\equiv 0\pmod {15m+8}$$ implies that $$F_{10m+6}\equiv
0\pmod {15m+8}.$$

Reciprocally, if $$F_{10m+6}\equiv 0\pmod {15m+8},$$
then $$F^2_{5m+2}\equiv F^2_{5m+4}\pmod {15m+8}.$$ So, either
$$F_{5m+2}\equiv F_{5m+4}\pmod {15m+8}$$ or $$F_{5m+2}\equiv -F_{5m+4}\pmod
{15m+8}.$$ If $$F_{5m+2}\equiv -F_{5m+4}\pmod {15m+8},$$ then
$$F_{5m+3}\equiv -2F_{5m+2}\pmod {15m+8}$$ and since $$F_{5m+3}\equiv
0\pmod {15m+8},$$ using the fact that $(2,15m+8)=1$ with $15m+8$ prime
such that $m$ an odd positive integer ($15m+8>2$), 
$$F_{5m+2}\equiv 0\pmod {15m+8}.$$ But, then, if
$$F_{10m+6}\equiv 0\pmod {15m+8},$$ we have
$$F_{15m+8}\equiv F_{10m+7}F_{5m+2}\equiv 0\pmod {15m+8}.$$ Or,
$$F_{15m+8}\equiv -1\pmod {15m+8}.$$ It leads to a contradiction meaning
that $$F_{5m+2}\equiv -F_{5m+4}\pmod {15m+8}$$ is not possible. So, if
$$F_{10m+6}\equiv 0\pmod {15m+8},$$ there
is only one possibility, that is to say $$F_{5m+2}\equiv F_{5m+4}\pmod
{15m+8}$$ which implies the congruence $$F_{5m+3}\equiv 0\pmod {15m+8}$$
and so which translates the congruence $$F_{15m+8}\equiv -1\pmod {15m+8}$$
into the congruence $$F^3_{5m+2}\equiv -1\pmod {15m+8}$$ which has at
least one solution. So, if $10m+6$ is a period of the Fibonacci sequence modulo
$15m+8$ with $15m+8$ prime, then we have
$$
F_{15m+9}\equiv 0\pmod {15m+8}$$
if and only if
$$
F_{5m+3}\equiv 0\pmod {15m+8}
$$
if and only if
$$
F_{5m+2}\equiv F_{5m+4}\pmod {15m+8}
$$
if and only if
$$
F_{10m+6}\equiv 0\pmod {15m+8}.
$$

Since $10m+6=2(5m+3)=\frac{2(5k+4)}{3}$ with $k=3m+1$ and $m$ an odd positive
integer, from above, we conclude that the number
$\frac{2(5k+4)}{3}$ is a period of the Fibonacci 
sequence modulo $5k+3$ if and only if $F_{\frac{5k+1}{3}}\equiv -1\pmod {5k+3}$.

\end{proof}

\begin{property}\label{p633}
Let $5k+4$ be a prime with $k$ an odd positive integer. Then,
for all $m\in[[ 0,5k]]$
\begin{equation}\label{c2}
F_{5k-m}\equiv (-1)^mF_{m+3}\pmod {5k+4}.
\end{equation}
\end{property}

\begin{proof}
From Properties \ref{p52} and \ref{p53}, we have
$$
F_{5k+3}\equiv 0\pmod {5k+4},
$$
and
$$
F_{5k+2}\equiv 1\pmod {5k+4}.
$$
Then, using the recurrence relation of the Fibonacci sequence, it
comes that
$$
F_{5k+1}\equiv -1\pmod {5k+4},
$$
$$
F_{5k}\equiv 2\pmod {5k+4},
$$
and
$$
F_{5k-1}\equiv -3\pmod {5k+4}.
$$
So, we verify \eqref{c2} is true when $m=0$ and $m=1$. 

Notice that \eqref{c2} is
verified when $m=5k$ since $F_0=0\equiv 0\pmod {5k+4}$ and $F_{5k+3}\equiv 0\pmod{5k+4}$.

Let assume for an integer $m\in[[ 0,5k-1]]$, we have
$F_{5k-i}\equiv (-1)^iF_{i+3}\pmod {5k+4}$ with $i=0,1,\ldots,m$. 
Then, using the recurrence relation of the Fibonacci sequence, 
we have ($0\leq m\leq 5k-1$),
\begin{align}
F_{5k-m-1} &= F_{5k-m+1}-F_{5k-m}\equiv (-1)^{m-1}F_{m+2}-(-1)^mF_{m+3}\pmod {5k+4}\nonumber \\
&\equiv (-1)^{m-1}(F_{m+2}+F_{m+3})\equiv (-1)^{m-1}F_{m+4}\pmod {5k+4}\nonumber \\
&\equiv (-1)^2(-1)^{m-1}F_{m+4}\equiv (-1)^{m+1}F_{m+4}\pmod {5k+4}\nonumber
\end{align}
since $(-1)^2=1$. It achieves the proof of Property \ref{p633} by
induction on the integer $m$. 
\end{proof}

Notice that Property \ref{p633} is also true for $m=-2,-1$.

\begin{remark}
Property \ref{p633} implies that we can limit ourself to the
integer interval $[ 1,\frac{5k+3}{2}]$ (knowing that
the case $m=0$ is a trivial case) in order to search or to rule out a
value for a possible
period of the Fibonacci sequence modulo $5k+4$ with $5k+4$ prime 
(such that $k$ is an odd positive integer) which is less than
$5k+3$. Notice that $5k+3$ is not in general the minimal period of the
Fibonacci sequence modulo $5k+4$ with $5k+4$ prime 
(such that $k$ is an odd positive integer). Indeed, for
instance, if $5k+4=29$ (and so for $k=5$), then it can be shown by calculating
the residue of $F_m$ with $m\in[ 1,14]$ modulo
$5k+4=29$, that the minimal period is $\frac{5k+3}{2}=14$.
\end{remark}

\begin{theorem}
Let $5k+4$ be a prime number with $k$ an odd positive number. If
$k\equiv 1\pmod 4$, then $$F_{\frac{5k+3}{2}}\equiv 0\pmod {5k+4}.$$
\end{theorem}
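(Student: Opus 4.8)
The plan is to imitate the proofs of the three analogous statements already established (for $5k+1$, $5k+2$, and $5k+3$), this time using Property \ref{p633} as the key input. First I would verify that $\frac{5k\pm 3}{2}$ are genuine positive integers: since $5k+4$ is prime and $k$ is odd, the product $5k$ is odd, so $5k\pm 3$ are even (and clearly positive for $k\geq 1$), whence $\frac{5k\pm 3}{2}\in\mathbb{N}$. This also places $m=\frac{5k-3}{2}$ in the admissible range $[0,5k]$ required by Property \ref{p633}.

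Next comes the decisive choice, namely taking $m=\frac{5k-3}{2}$ in Property \ref{p633}. With this value the two indices coincide: $5k-m=\frac{5k+3}{2}$ and $m+3=\frac{5k+3}{2}$. Property \ref{p633} then specializes to
\[
F_{\frac{5k+3}{2}}\equiv (-1)^{\frac{5k-3}{2}}F_{\frac{5k+3}{2}}\pmod{5k+4}.
\]

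The step that actually matters is determining the parity of the exponent $\frac{5k-3}{2}$, and this is exactly where the hypothesis $k\equiv 1\pmod 4$ is used. From $k\equiv 1\pmod 4$ we obtain $5k\equiv 5\equiv 1\pmod 4$, hence $5k-3\equiv 2\pmod 4$, which forces $\frac{5k-3}{2}$ to be odd and therefore $(-1)^{\frac{5k-3}{2}}=-1$. Substituting this into the congruence above yields $F_{\frac{5k+3}{2}}\equiv -F_{\frac{5k+3}{2}}\pmod{5k+4}$, that is, $2F_{\frac{5k+3}{2}}\equiv 0\pmod{5k+4}$.

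Finally, since $5k+4$ is prime and $5k+4>2$, the integers $2$ and $5k+4$ are relatively prime, so I may cancel the factor $2$ and conclude $F_{\frac{5k+3}{2}}\equiv 0\pmod{5k+4}$. I do not expect any serious obstacle here: the entire argument is forced once Property \ref{p633} is granted, and the only delicate point is the parity bookkeeping, which collapses to the single congruence $5k-3\equiv 2\pmod 4$ supplied by $k\equiv 1\pmod 4$.
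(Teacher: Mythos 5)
Your proof is correct and follows essentially the same route as the paper: both take $m=\tfrac{5k-3}{2}$ in Property \ref{p633}, use $k\equiv 1\pmod 4$ to show this exponent is odd, and cancel the resulting factor of $2$ using the primality of $5k+4$. (Your write-up is in fact cleaner, since the paper's version contains typographical slips writing $5k+2$ for $5k+4$ in the final displays.)
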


\begin{proof}

Since $5k+4$ with $k$ an odd positive number, is prime, the numbers 
$5k\pm 3$ are non-zero even positive integers. So, the numbers
$\frac{5k\pm 3}{2}$ are
non-zero positive integers. Moreover, if $k\equiv 1\pmod 4$, then
$5k-3\equiv 2\pmod 4$. So, the integer $\frac{5k-3}{2}$ is odd.

Using Property \ref{p633} and taking $m=\frac{5k-3}{2}$, it gives
$$
F_{\frac{5k+3}{2}}\equiv -F_{\frac{5k+3}{2}}\pmod {5k+2},
$$
or,
$$
2F_{\frac{5k+3}{2}}\equiv 0\pmod {5k+2},
$$
finally,
$$
F_{\frac{5k+3}{2}}\equiv 0\pmod {5k+2}
$$
since $2$ and $5k+4$ with $5k+4$ prime are relatively prime.
\end{proof}

\begin{theorem}\label{t1}
Let $5k+1$ be a prime with $k$ a non-zero positive even integer. If
$k\equiv 0\pmod 3$ and if $\frac{10k}{3}$ is a period of the Fibonacci
sequence modulo $5k+1$, then the congruence
$$
F_{5k}\equiv 0\pmod{5k+1}
$$
is equivalent to the congruence
$$
F_{\frac{5k}{3}}\equiv 0\pmod{5k+1}
$$
which is equivalent to the congruence
$$
F_{\frac{10k}{3}}\equiv 0\pmod{5k+1}.
$$
Moreover, if $k\equiv 0\pmod 3$ and if $F_{\frac{5k}{3}}\equiv
0\pmod{5k+1}$, then the number $\frac{10k}{3}$ is a period of the
Fibonacci sequence modulo $5k+1$ if and only if
$$
F_{\frac{5k\pm 3}{3}}\equiv 1\pmod{5k+1}.
$$
\end{theorem}

\begin{proof}
If $k\equiv 0\pmod 3$ and $k$ a non-zero positive even integer, then
there exists a non-zero positive integer $m$ such that $k=3m$. Notice
that $m$ is even since k is even. Since $F_{5k}\equiv 0\pmod{5k+1}$
with $5k+1$ prime ($k$ positive even), we 
have also $F_{15m}\equiv 0\pmod{15m+1}$ with $15m+1$ prime ($m$ positive even).
Using Theorem 1.27, we have
$$
F_{15m}=F_{10m+5m}=F_{5m}F_{10m+1}+F_{5m-1}F_{10m},
$$
$$
F_{10m-1}=F_{5m-1+5m}=F^2_{5m}+F^2_{5m-1}.
$$
From Remark 4.6, we have
$$
F_{10m}=F_{2\times 5m}=F_{5m}(F_{5m+1}+F_{5m-1})=F^2_{5m+1}-F^2_{5m-1},
$$
$$
F_{10m+1}=F_{2\times 5m+1}=F^2_{5m+1}+F^2_{5m}.
$$
So
\begin{align}
F_{15m} &= F_{5m}(F^2_{5m+1}+F^2_{5m})+F_{5m-1}F_{5m}(F_{5m+1}+F_{5m-1})\nonumber\\
 &= F_{5m}(F^2_{5m+1}+F^2_{5m}+F_{5m-1}F_{5m+1}+F^2_{5m-1})\nonumber\\
 &= F_{5m}(3F^2_{5m-1}+3F_{5m-1}F_{5m}+2F^2_{5m})\nonumber\\
 &= F_{5m}(3F_{5m-1}F_{5m+1}+2F^2_{5m}).\nonumber
\end{align}

So, the congruence $F_{15m}\equiv 0\pmod{15m+1}$ with $m$ an even positive
integer such that $15m+1$ prime is satisfied if and only if either
$$
F_{5m}\equiv 0\pmod{15m+1}
$$
or
$$
3F_{5m-1}F_{5m+1}\equiv -2F^2_{5m}\pmod{15m+1}.
$$
If $F_{5m}\equiv 0\pmod{15m+1}$, then from above, we have
necessarily
$$
F_{10m}\equiv 0\pmod{15m+1}.
$$
Using the recurrence relation of the Fibonacci sequence, it implies also that
$F_{5m+1}\equiv F_{5m-1}\pmod{15m+1}$. Moreover, we have
$$
F_{10m+1}\equiv F^2_{5m+1}\equiv F^2_{5m-1}\pmod{15m+1}.
$$
Or, using Theorem 1.27, we have
$$
F_{15m+1}=F_{5m+10m+1}=F_{10m+1}F_{5m+1}+F_{10m}F_{5m}.
$$
Since $F_{5k+1}\equiv 1\pmod{5k+1}$ with $5k+1$ prime ($k$ non-zero
positive even) and so if $k=3m$ such that $m$ non-zero positive even,
$$
F_{15m+1}\equiv 1\pmod{15m+1}
$$
with $15m+1$ prime ($m$ non-zero positive even), since
$$
F_{10m+1}\equiv F^2_{5m+1}\pmod{15m+1}
$$
and
$$
F_{10m}\equiv 0\pmod{15m+1}
$$
it implies that
$$
F_{10m+1}F_{5m+1}\equiv F^3_{5m+1}\equiv 1\pmod{15m+1}.
$$
We get
\be
\label{E1}
F^3_{5m+1}-1\equiv 0\pmod{15m+1}
\ee
and
$$
(F_{5m+1}-1)(F^2_{5m+1}+F_{5m+1}+1)\equiv 0\pmod{15m+1}.
$$
So, either
$$
F_{5m+1}-1\equiv 0\pmod{15m+1}
$$
or
$$
F^2_{5m+1}+F_{5m+1}+1\equiv 0\pmod{15m+1}.
$$
If
$$
F_{5m}\equiv 0\pmod{15m+1}
$$
and if
$$
F_{5m+1}-1\equiv 0\pmod{15m+1}
$$
and so
$$
F_{5m+1}\equiv 1\pmod{15m+1},
$$
then
$$
F_{10m+1}\equiv 1\pmod{15m+1}.
$$
It results that the number $10m$ is a period of the Fibonacci sequence
modulo $15m+1$ with $15m+1$ prime and $m$ a non-zero positive even integer.
If
$$
F_{5m}\equiv 0\pmod{15m+1}
$$
and if
$$
F^2_{5m+1}+F_{5m+1}+1\equiv 0\pmod{15m+1}
$$
and so
$$
F^2_{5m+1}\equiv -F_{5m+1}-1\pmod{15m+1}
$$
then since
\begin{align}
F_{10m+1} &\equiv F^2_{5m+1}\pmod{15m+1}\nonumber\\
 &\equiv -F_{5m+1}-1\pmod{15m+1}.\nonumber
 \end{align}
 
Notice that in this case, we cannot have
$$
F_{5m+1}\equiv 1\pmod{15m+1}
$$
since $3\not\equiv 0\pmod{15m+2}$ with $m$ a non-zero positive even
integer such that $15m+1$ prime (and so $15m+1>3$). Then, let us assume
absurdly that if
$$
F^2_{5m+1}+F_{5m+1}+1\equiv 0\pmod{15m+1}
$$
then the number $10m$ is a period of the Fibonacci sequence modulo
$15m+1$ with $15m+1$ prime and $m$ a non-zero positive even
integer. In such a case,
$$
F_{10m+1}\equiv 1\pmod{15m+1}
$$
which implies that
$$
F_{5m+1}\equiv -2\pmod{15m+1}.
$$
Since
$$
F^2_{5m+1}\equiv -F_{5m+1}-1\pmod{15m+1}
$$
it gives
$$
4\equiv 1\pmod{15m+1}.
$$
But, since $15m+1$ is a prime number such that $m$ is a non-zero
positive even integer, we have $15m+1>4$ and so $4\not\equiv
1\pmod{15m+1}$. So, we reach a contradiction meaning that if
$$
F^2_{5m+1}+F_{5m+1}+1\equiv 0\pmod{15m+1}
$$
and so if
$$
F_{5m+1}\not\equiv 1\pmod{15m+1}
$$
the number $10m$ is not a period of the Fibonacci sequence modulo
$15m+1$ with $15m+1$ prime and $m$ a non-zero positive even integer.
Moreover, if
$$
F_{5m}\equiv 0\pmod{15m+1}
$$
and reciprocally if the number $10m$ is a period of the Fibonacci
sequence modulo $15m+1$ with $15m+1$ prime and $m$ a non-zero positive
even integer, then
$$
F_{10m+1}\equiv 1\pmod{15m+1}
$$
which implies that
$$
F^2_{5m+1}\equiv 1\pmod{15m+1}.
$$
So, either
$$
F_{5m+1}\equiv 1\pmod{15m+1}
$$
or
$$
F_{5m+1}\equiv -1\pmod{15m+1}.
$$
Since we have (\ref{E1}), it remains only one possibility, that is to
say
$$
F_{5m+1}\equiv 1\pmod{15m+1}
$$
$\frac{10k}{3}=10m$ is a period of the Fibonacci sequence modulo
$15m+1$, we must have $F_{10m+1}\equiv F^2_{5m+1}\equiv 1\pmod{15m+1}$
in addition to the condition
$$
F_{5m}\equiv 0\pmod{15m+1}.
$$
If
$$
3F_{5m-1}F_{5m+1}\equiv -2F^2_{5m}\pmod{15m+2}
$$
then from Property 1.3, we can find an integer $c$
such that
$$
\left\{\begin{array}{c}
F_{5m-1}F_{5m+1}\equiv -2c\pmod{15m+1},
\\
F^2_{5m}\equiv 3c\pmod{15m+1},
\end{array}\right.
$$
or equivalently ($F_{5m+1}=F_{5m}+F_{5m-1}$ and $F_{10m-1}=F^2_{5m}+F^2_{5m-1}$)
\begin{align}
c &\equiv F^2_{5m}+F_{5m-1}F_{5m+1}\pmod{15m+1}\nonumber\\
 &\equiv F^2_{5m}+F_{5m-1}F_{5m}+F^2_{5m-1}\nonumber\\
 &\equiv F_{10m-1}+F_{5m-1}F_{5m}\pmod{15m+1}.\nonumber
 \end{align}
So, if the number $10m$ with $m$ a non-zero positive even integer is a
period of the Fibonacci sequence modulo $15m+1$ with $15m+1$ prime, we
should have
$$
F_{10m}\equiv 0\pmod{15m+1}
$$
and
$$
F_{10m-1}\equiv F_{10m+1}\equiv 1\pmod{15m+1}.
$$
Since
$$
F_{10m}=F^2_{5m+1}-F^2_{5m-1}
$$
and
$$
c\equiv F_{10m-1}+F_{5m-1}F_{5m}\pmod{15m+1}
$$
it implies that
$$
F^2_{5m+1}\equiv F^2_{5m-1}\pmod{15m+1}
$$
and
$$
c\equiv 1+F_{5m-1}F_{5m}\pmod{15m+1}.
$$
So, either
$$
F_{5m+1}\equiv F_{5m-1}\pmod{15m+1}
$$
or
$$
F_{5m+1}\equiv -F_{5m-1}\pmod{15m+1}.
$$
If
$$
F_{5m+1}\equiv F_{5m-1}\pmod{15m+1}
$$
then
$$
F_{5m}\equiv 0\pmod{15m+1}
$$
and
$$
c\equiv 1\equiv 0\pmod{15m+1}
$$
where we used the fact that
$$
3c\equiv F^2_{5m}\pmod{15m+1}
$$
and $(3,15m+1)=1$ with $15m+1$ prime. But, $1\not\equiv
0\pmod{15m+1}$. So, we reach a contradiction meaning that this case is
not possible. Otherwise, if
$$
F_{5m+1}\equiv -F_{5m-1}\pmod{15m+1}
$$
then using the recurrence relation of the Fibonacci sequence, we must
have
$$
F_{5m}\equiv -2F_{5m-1}\pmod{15m+1}
$$
and so
$$
c\equiv 1-2F^2_{5m-1}\equiv 3F^2_{5m-1}\pmod{15m+1}
$$
where we used the fact that
$$
c\equiv F^2_{5m}+F_{5m-1}F_{5m+1}\pmod{15m+1}.
$$
It implies that
$$
5F^2_{5m-1}\equiv 1\pmod{15m+1}
$$
and using Theorem 1.5, it gives
$$
F^2_{5m-1}\equiv 5^{15m-1}\equiv 12m+1\pmod{15m+1}
$$
since $5^{15m}\equiv 1\equiv 60m+5\pmod{15m+1}$ which implies that
$5^{15m-1}\equiv 12m+1\pmod{15m+1}$ (recall that $15m+1$ is prime and
so $(5,15m+1)=1$. Since
$$
F_{5m}\equiv -2F_{5m-1}\pmod{15m+1}
$$
$$
F^2_{5m}\equiv 3c\pmod{15m+1}
$$
and
$$
c\equiv 3F^2_{5m-1}\pmod{15m+1}
$$
it results that
$$
F^2_{5m}\equiv 9F^2_{5m-1}\equiv 3c\equiv 4F^2_{5m-1}\pmod{15m+1}
$$
and so $4(12m+1)\equiv 9(12m+1)\pmod{15m+1}$. Since
$4(12m+1)=48m+4\equiv 3m+1\pmod{15m+1}$, it implies that $105m+8\equiv
0\pmod{15m+1}$ and so $1\equiv 0\pmod{15m+1}$ which is not possible
since $1\not\equiv 0\pmod{15m+1}0$. So, we obtain again a
contradiction meaning that this latter case is not also possible.

Therefore, when $5k+1=15m+1$ is prime with $k=3m$ and $m$ a non-zero
positive even integer, if $10m$ is a period of the Fibonacci sequence
modulo $15m+1$ with $15m+1$ prime, then
$$
F_{15m}\equiv 0\pmod{15m+1}
$$
if and only if
$$
F_{5m}\equiv 0\pmod{15m+1}.
$$
Since
$$
F_{15m}=F_{5k}\equiv 0\pmod{5k+1}
$$
is true when $5k+1$ is prime, we deduce that
$$
F_{\frac{5k}{3}}\equiv 0\pmod{5k+1}
$$
is also true when $k\equiv 0\pmod 3$ and $5k+1$ prime.

Thus, if $10m$ is a period of the Fibonacci sequence modulo $15m+1$
with $15m+1$ prime, then we have
$$
F_{15m}\equiv 0\pmod{15m+1}
$$
if and only if
$$
F_{5m}\equiv 0\pmod{15m+1}
$$
if and only if
$$
F_{5m-1}\equiv F_{5m+1}\pmod{15m+1}.
$$
Besides,
$$
F_{5m}\equiv 0\pmod{15m+1}
$$
implies that
$$
F_{10m}\equiv 0\pmod{15m+1}.
$$
Reciprocally, if
$$
F_{10m}\equiv 0\pmod{15m+1}
$$
then
$$
F^2_{5m+1}\equiv F^2_{5m-1}\pmod{15m+1}.
$$
So, either
$$
F_{5m+1}\equiv F_{5m-1}\pmod{15m+1}
$$
or
$$
F_{5m+1}\equiv -F_{5m-1}\pmod{15m+1}.
$$
If
$$
F_{5m+1}\equiv -F_{5m-1}\pmod{15m+1}
$$
then
$$
F_{5m}\equiv -2F_{5m-1}\pmod{15m+1}
$$
and since
$$
F_{5m}\equiv 0\pmod{15m+1}
$$
using the fact that $(2,15m+1)=1$ with $15m+1$ prime such that $m$ is
a non-zero positive even integer ($15m+1>2$),
$$
F_{5m-1}\equiv 0\pmod{15m+1}.
$$
But, then, if
$$
F_{10m}\equiv 0\pmod{15m+1}
$$
we have
$$
F_{15m+1}\equiv F_{10m+1}F_{5m+1}\equiv 0\pmod{15m+1}.
$$
Or,
$$
F_{15m+1}\equiv 1\pmod{15m+1}.
$$
It leads to a contradiction meaning that
$$
F_{5m+1}\equiv -F_{5m-1}\pmod{15m+1}
$$
is not possible. So, if
$$
F_{10m}\equiv 0\pmod{15m+1}
$$
there is only one possibility, that is to say
$$
F_{5m+1}\equiv F_{5m-1}\pmod{15m+1}
$$
which implies the congruence
$$
F_{5m}\equiv 0\pmod{15m+1}
$$
and so which translates the congruence
$$
F_{15m+1}\equiv 1\pmod{15m+1}
$$
into the congruence
$$
F^3_{5m+1}\equiv 1\pmod{15m+1}
$$
which has at least one solution. So, if $10m$ is a period of the
Fibonacci sequence modulo $15m+1$ with $15m+1$ prime, then we have
$$
F_{15m}\equiv 0\pmod{15m+1}
$$
if and only if
$$
F_{5m}\equiv 0\pmod{15m+1}
$$
if and only if
$$
F_{5m+1}\equiv F_{5m-1}\pmod{15m+1}
$$
if and only if
$$
F_{10m}\equiv 0\pmod{15m+1}.
$$
Since $10m=\frac{10k}{3}$ with $k=3m$ and $m$ a non-zero positive even
integer, from above, we conclude that if $F_{\frac{5k}{3}}\equiv
0\pmod{5k+1}$, then $\frac{10k}{3}$ is a period of the Fibonacci
sequence modulo $5k+1$ with $5k+1$ prime if and only if
$$
F_{\frac{5k\pm 3}{3}}\equiv 1\pmod{5k+1}.
$$
\end{proof}

\begin{theorem}\label{t2}
Let $5k+4$ be a prime with $k$ an odd positive integer. If
$k\equiv 0\pmod 3$ and if $\frac{2(5k+3)}{3}$ is a period of the Fibonacci
sequence modulo $5k+4$, then the congruence
$$
F_{5k+3}\equiv 0\pmod{5k+4}
$$
is equivalent to the congruence
$$
F_{\frac{5k+3}{3}}\equiv 0\pmod{5k+4}
$$
which is equivalent to the congruence
$$
F_{\frac{2(5k+3)}{3}}\equiv 0\pmod{5k+4}.
$$
Moreover, if $k\equiv 0\pmod 3$ and if $F_{\frac{5k+3}{3}}\equiv
0\pmod{5k+4}$, then the number $\frac{2(5k+3)}{3}$ is a period of the
Fibonacci sequence modulo $5k+4$ if and only if
$$
F_{\frac{5k}{3}}\equiv 1\pmod{5k+4}
$$
\end{theorem}
\begin{proof}
The proof is very similar than to proof of Theorem 4.33.
\end{proof}

The next theorem below is a generalization of Theorems 4.33, 4.37 and Theorem \ref{t1} and \ref{t2} given above. The
number $\ell_{5k+r}$ with $5k+r$ prime such that $r\in[[1,4]]$ and $k\equiv r+1\pmod 2$ is a period of the Fibonacci
sequence modulo $5k+r$. Its expression is given in Corollary 4.10.

\begin{theorem}\label{t3}
Let $5k+r$ be a prime such that $r\in[[1,4]]$ and
$k\equiv r+1\pmod 2$. If 
$k\equiv \frac{(r-1)(r-2)}{2}\pmod 3$ and if $\frac{\ell_{5k+r}}{3}$
is a period of the Fibonacci 
sequence modulo $5k+r$, then the congruence
$$
F_{\frac{\ell_{5k+r}}{2}}\equiv 0\pmod{5k+r}
$$
is equivalent to the congruence
$$
F_{\frac{\ell_{5k+r}}{6}}\equiv 0\pmod{5k+r}
$$
which is equivalent to the congruence
$$
F_{\frac{\ell_{5k+r}}{3}}\equiv 0\pmod{5k+r}.
$$
Moreover, if $k\equiv \frac{(r-1)(r-2)}{2}\pmod 3$ and if 
$F_{\frac{\ell_{5k+r}}{6}}\equiv 0\pmod{5k+r}$, then the number
$\frac{\ell_{5k+r}}{3}$ is a period of the 
Fibonacci sequence modulo $5k+r$ if and only if
$$
F_{\frac{\ell_{5k+r}}{6}-1}\equiv\left\{\begin{array}{ccccc}
1\pmod{5k+r} & if & r=1 & or & r=4,
\\
-1\pmod{5k+r} & if & r=2 & or & r=3.
\end{array}\right.
$$
\end{theorem}

\begin{proof}
The results stated in Theorem \ref{t3} can be deduced from Theorems
4.33, 4.37 and Theorems \ref{t1} and \ref{t2} given above.
\end{proof}

\section{{Some results on Generalized Fibonacci numbers}}

In this section, we deduce some small results related to the generalized fibonacci numbers as defined below.

\begin{definition}\label{d4}
Let $a,b,r$ be three numbers. The sequence
$(C_{n,2}(a,b,r))$ is defined by
$$
C_{n,2}(a,b,r)=C_{n-1,2}(a,b,r)+C_{n-2,2}(a,b,r)+r,\,\,\,\forall\,n\geq 2$$ with$$\left\{\begin{array}{l}
C_{0,2}(a,b,r)=b-a-r,
\\
C_{1,2}(a,b,r)=a.
\end{array}
\right.
$$
\end{definition}

In particular, we have
$$
F_n=C_{n,2}(1,1,0),\,\,\,\forall\,n\geq 0.
$$

\begin{remark}\label{r5}
This sequence can be defined from $n=1$ by setting $C_{2,2}(a,b,r)=b$
as in \cite{bj}.
\end{remark}

\begin{proposition}\label{p7}
Let $a,b,r$ be three numbers.
The sequences $(C_{n,2}(a,b,r))$, $(C_{n,2}(1,0,-1))$, $(F_n)$ satisfies
$$
C_{n,2}(a,b,r)=aF_{n-2}+bF_{n-1}-rC_{n+1,2}(1,0,-1),\,\,\,\forall\,n\geq 2.
$$
\end{proposition}
\begin{proof}
Let $a,b,r$ be three numbers.
Let us prove Proposition \ref{p7} by induction on the integer $n\geq 2$. We have
$$
C_{2,2}(a,b,r)=b=a\times 0+b\times 1+r\times 0
=a\times F_0+b\times F_1-r\times C_{3,2}(1,0,-1)
$$
Let us assume
that this proposition is true up to $n\geq 2$. Using the recurrence
relations of sequences $(C_{n,2}(a,b,r))$, $(C_{n,2}(1,0,-1))$ and
$(F_n)$, we have
\begin{align}
C_{n+1,2}(a,b,r) &= C_{n,2}(a,b,r)+C_{n-1,2}(a,b,r)+r\nonumber\\
 &= (aF_{n-2}+bF_{n-1}-rC_{n+1,2}(1,0,-1))+(aF_{n-3}+bF_{n-2}
-rC_{n,2}(1,0,-1))+r\nonumber\\
 &= a(F_{n-2}+F_{n-3})+b(F_{n-1}+F_{n-2})
-r(C_{n+1,2}(1,0,-1)+C_{n,2}(1,0,-1)-1)\nonumber\\
 &= aF_{n-1}+bF_n-rC_{n+2,2}(1,0,-1)\nonumber
 \end{align}
 
Thus by induction, the proof is complete.
\end{proof}

\begin{proposition}\label{p8}
The sequences $(C_{n,2}(1,0,-1))$ and $(F_n)$ satisfies
$$
C_{n,2}(1,0,-1)=C_{n-2,2}(1,0,-1)-F_{n-2},\,\,\,\forall\,n\geq 2.
$$
$$
C_{n,2}(1,0,-1)=-{\displaystyle\sum^{n-3}_{k=1}}F_k,\,\,\,\forall\,n\geq 4.
$$
From Proposition \ref{p7}, for any numbers $a,b,r$, it results that
$$
C_{n,2}(a,b,r)=aF_{n-2}+bF_{n-1}+r{\displaystyle\sum^{n-2}_{k=1}}F_k,
\,\,\,\forall\,n\geq 2
$$
$$
C_{n,2}(a,b,r)=aF_{n-2}+bF_{n-1}+r(F_n-1),
\,\,\,\forall\,n\geq 2.
$$
\end{proposition}

This result can be easily verifies using mathematical induction and Theorem 1.26 and Proposition 5.4. We shall omit the details here.

The theorem below appears in any standard linear algebra textbook. 

\begin{theorem}\label{t9}
(i) A linear recurrence sequence $(u_n)_{n\geq 0}$ of order $2$ which satisfies a
linear recurrence relation as
$$
u_n=\alpha_1u_{n-1}+\alpha_2u_{n-2},\,\,\,\forall\,n\geq 2
$$
with $\alpha_1$, $\alpha_2$ in a field $K$ ($K=\mathbb{R}$
or $K=\mathbb{C}$), is completely and uniquely determined by its first
terms $u_0$ and $u_1$. 
\\[0.1in]
(ii) If $(u_n)_{n\geq 0}$, $(v_n)_{n\geq 0}$ are
two linear recurrence sequences of order $2$ such that
$$
\det\left(\begin{array}{ccc}
u_0 & v_0
\\
u_1 & v_1
\end{array}\right)=u_0 v_1-u_1 v_0\neq 0
$$ 
then any linear recurrence sequence $(w_n)_{n\geq 0}$ of order $2$ is
uniquely written as
$$
(w_n)_{n\geq 0}=\lambda (u_n)_{n\geq 0}+\mu (v_n)_{n\geq 0}
$$
with $\lambda$, $\mu$ in a field $K$ ($K=\mathbb{R}$
or $K=\mathbb{C}$).
\end{theorem}

\begin{proof}
The statement (i) is proved by induction.
\\[0.1in]
The statement (ii) can be proved from (i) and from the Cramer's rule for
system of linear equations.
\end{proof}

\begin{definition}\label{d10}
Let $k$ be an integer which is greater than $2$ and let
$a_0,\ldots, a_{k-1}$ be $k$ numbers. The sequence $(F_{n,k}(a_0,\ldots,a_{k-1}))$ for $k\geq 2$ is defined by
$$
F_{n,k}(a_0,\ldots,a_{k-1})=F_{n-1,k}(a_0,\ldots,a_{k-1})+F_{n-k,k}(a_0,\ldots,a_{k-1}),
\,\,\,\forall\,n\geq k$$
with $$F_{i,k}(a_0,\ldots,a_{k-1})=a_i,\,\,\,\forall\,i\in\{0,\ldots,k-1\}.
$$
The sequence $(F_{n,k}(a_0,\ldots,a_{k-1}))$ is called
the $k$-Fibonacci sequence with initial conditions $a_0,\ldots,a_{k-1}$.
\end{definition}

\begin{proposition}\label{p11}
Let $a_0,a_1$ be two numbers. The $2$-Fibonacci numbers sequence
$(F_{n,2}(a_0,a_1))$ has general term
$$
F_{n,2}(a_0,a_1)=\alpha \varphi^n+\beta (1-\varphi)^n,
\,\,\,\forall\,n\geq 0
$$
where $\varphi=\frac{1+\sqrt{5}}{2}$ is the golden ratio and
$$
\alpha=\frac{a_0(\varphi-1)+a_1}{\sqrt{5}},\qquad\qquad
\beta=\frac{a_0\varphi-a_1}{\sqrt{5}}.
$$
In particular, we have
$$
F_n=F_{n,2}(0,1),\qquad\qquad L_n=F_{n,2}(2,1).
$$
\end{proposition}

\begin{proof}
Let $a_0,a_1$ be two numbers. Using the relation of recurrence of the sequence
$(F_{n,2}(a_0,a_1))$ and taking the Ansatz
$F_{n,2}(a_0,a_1)=z^n$, we have for $n\geq 2$
$$
z^n=z^{n-1}+z^{n-2}.
$$
For $z\neq 0$, it gives ($n\geq 2$) $z^2-z-1=0$. The discriminant of this polynomial equation of second degree is
$\Delta=\sqrt{5}$. So, the roots of this equation are:
$$
\varphi=\frac{1+\sqrt{5}}{2},\qquad\qquad 1-\varphi=\frac{1-\sqrt{5}}{2}.
$$
We can notice that any linear combination of $\varphi^n,(1-\varphi)^n$
for $n\geq 0$ verifies the equation $z^n=z^{n-1}+z^{n-2}$ for $n\geq 0$. 
Since $0=0\cdot \varphi^n=0\cdot (1-\varphi)^n$, the
sequences which satisfy the recurrence relation of sequence
$(F_{n,2}(a_0,a_1))$ form a vector subspace of the set of
complex sequences. Given $a_0,a_1$, from Theorem \ref{t9} above,
since
$$
\det\left(\begin{array}{ccc}
1 & 1
\\
\varphi & 1-\varphi
\end{array}\right)=1-2\varphi=-\sqrt{5}\neq 0
$$
we deduce that there exist 
two numbers $\alpha,\beta$ such that
$$
F_{n,2}(a_0,a_1)=\alpha \varphi^n+\beta (1-\varphi)^n.
$$
Since
$$
F_{0,2}(a_0,a_1)=a_0\qquad\qquad F_{1,2}(a_0,a_1)=a_1,
$$
the coefficients $\alpha,\beta$ verify the matrix equation
$$
\left(\begin{array}{ccc}
1 & 1
\\
\varphi & 1-\varphi
\end{array}\right)
\left(\begin{array}{c}
\alpha
\\
\beta
\end{array}\right)
=\left(\begin{array}{c}
a_0
\\
a_1
\end{array}\right)
$$
So:
$$
\left(\begin{array}{c}
\alpha
\\
\beta
\end{array}\right)=\left(\begin{array}{ccc}
1 & 1
\\
\varphi & 1-\varphi
\end{array}\right)^{-1}\left(\begin{array}{c}
a_0
\\
a_1
\end{array}\right)
$$
where
$$
\left(\begin{array}{ccc}
1 & 1
\\
\varphi & 1-\varphi
\end{array}\right)^{-1}=\frac{1}{\sqrt{5}}\left(\begin{array}{ccc}
\varphi-1 & 1
\\
\varphi & -1
\end{array}\right)
$$
So:
$$
\left(\begin{array}{c}
\alpha
\\
\beta
\end{array}\right)=\frac{1}{\sqrt{5}}\left(\begin{array}{c}
a_0(\varphi-1)+a_1
\\
a_0\varphi-a_1
\end{array}\right).
$$
\end{proof}

\begin{proposition}\label{p12}
Let $a_0,a_1$ be two numbers. We have
$$
F_{n,2}(a_0,a_1)=a_0F_{n+1}+(a_1-a_0)F_n,\,\,\,\forall\,n\geq 0.
$$
\end{proposition}

\begin{proof}
From Proposition \ref{p11}, we have
\begin{align}
F_{n,2}(a_0,a_1) &= \frac{(a_0(\varphi-1)+a_1)\varphi^n
+(a_0\varphi-a_1)(1-\varphi)^n}{\sqrt{5}}\nonumber\\
 &= \frac{a_0\left[(\varphi-1)\varphi^n+\varphi(1-\varphi)^n\right]
+a_1\left[\varphi^n-(1-\varphi)^n\right]}{\sqrt{5}}\nonumber\\
 &= -a_0\left\{\frac{(1-\varphi)\varphi^n
-\varphi(1-\varphi)^n}{\sqrt{5}}\right\}
+a_1\left\{\frac{\varphi^n-(1-\varphi)^n}{\sqrt{5}}\right\}\nonumber\\
 &= -a_0\left\{\frac{(1-\varphi)\varphi^n
+(1-\varphi-1)(1-\varphi)^n}{\sqrt{5}}\right\}+a_1F_n\nonumber\\
 &= -a_0\left\{\frac{\varphi^n-(1-\varphi)^n}{\sqrt{5}}
-\left(\frac{\varphi^{n+1}-(1-\varphi)^{n+1}}{\sqrt{5}}\right)\right\}+a_1F_n\nonumber\\
 &= -a_0(F_n-F_{n+1})+a_1F_n\nonumber\\
 &= a_0F_{n+1}+(a_1-a_0)F_n.\nonumber
 \end{align}
\end{proof}

\begin{proposition}\label{p13}
Let $z$ be a real complex number such that $\varphi |z|<1$. We have
$$
{\displaystyle\sum^{+\infty}_{n=0}}F_nz^n=\frac{z}{(1-\varphi z)(1-z+\varphi z)}
=\frac{z}{1-z-z^2}
$$
\end{proposition}

This is a standard result and we omit the proof here.

\begin{example}\label{ex14}
Applying Proposition \ref{p13} when $z=1/2$, we have
\begin{align}
{\displaystyle\sum^{+\infty}_{n=0}}\frac{F_n}{2^n} &
= \frac{1/2}{\left(1-\frac{\varphi}{2}\right)
\left(\frac{1}{2}+\frac{\varphi}{2}\right)}\nonumber\\
 &= \frac{1/2}
{\left(\frac{2-\varphi}{2}\right)\left(\frac{1+\varphi}{2}\right)}\nonumber\\
 &= \frac{2}
{2+2\varphi-\varphi-\varphi^2}.\nonumber\\
 &= \frac{2}{2+\varphi-(\varphi+1)}.\nonumber\\
 &= 2\nonumber
 \end{align}
Thus $$
{\displaystyle\sum^{+\infty}_{n=0}}\frac{F_n}{2^{n+1}}=1.
$$
\end{example}

\begin{proposition}\label{p15}
Let $z$ be a real complex number such that $\varphi |z|<1$. Let $a_0$
and $a_1$ be two numbers. We have the generating function
$$
{\displaystyle\sum^{+\infty}_{n=0}}F_{n,2}(a_0,a_1)z^n
=\frac{a_0+(a_1-a_0)z}{(1-\varphi z)(1-z+\varphi z)}
=\frac{a_0+(a_1-a_0)z}{1-z-z^2}.
$$
\end{proposition}

\begin{proof}
Let $z$ be a real complex number such that $\varphi |z|<1$. 
When $z=0$, we have
$$
\left({\displaystyle\sum^{+\infty}_{n=0}}F_{n,2}(a_0,a_1)z^n\right)_{z=0}=F_{0,2}(a_0,a_1)=a_0
$$
and
$$
\left(\frac{a_0+(a_1-a_0)z}{(1-\varphi z)(1-z+\varphi z)}\right)_{z=0}=a_0.
$$
So, the formula of Proposition \ref{p15} is true for $z=0$. In the
following, we assume that $z\neq 0$. From Proposition \ref{p12}, we know that
$$
F_{n,2}(a_0,a_1)=a_0F_{n+1}+(a_1-a_0)F_n,\,\,\,\forall\,n\geq 0.
$$
So, using Proposition \ref{p13}, we have ($\varphi |z|<1$ and $z\neq 0$)
$$
{\displaystyle\sum^{+\infty}_{n=0}}F_{n,2}(a_0,a_1)z^n
=a_0{\displaystyle\sum^{+\infty}_{n=0}}F_{n+1}z^n
+(a_1-a_0){\displaystyle\sum^{+\infty}_{n=0}}F_nz^n.
$$
Or ($\varphi |z|<1$ and $z\neq 0$)
$$
{\displaystyle\sum^{+\infty}_{n=0}}F_{n+1}z^n=\frac{1}{z}
{\displaystyle\sum^{+\infty}_{n=0}}F_{n+1}z^{n+1}
=\frac{1}{z}{\displaystyle\sum^{+\infty}_{n=1}}F_nz^n
=\frac{1}{z}{\displaystyle\sum^{+\infty}_{n=0}}F_nz^n
$$
where we used the fact that $F_0=0$.
\\[0.1in]
It follows that ($\varphi |z|<1$ and $z\neq 0$)
$$
{\displaystyle\sum^{+\infty}_{n=0}}F_{n,2}(a_0,a_1)z^n
=\left(\frac{a_0}{z}+a_1-a_0\right){\displaystyle\sum^{+\infty}_{n=0}}F_nz^n.
$$
From Proposition \ref{p13}, it results that ($\varphi |z|<1$ and $z\neq 0$)
\begin{align}
{\displaystyle\sum^{+\infty}_{n=0}}F_{n,2}(a_0,a_1)z^n
 &= \left(\frac{a_0+(a_1-a_0)z}{z}\right)\frac{z}{(1-\varphi z)(1-z+\varphi z)}\nonumber\\
 &= \frac{a_0+(a_1-a_0)z}{(1-\varphi z)(1-z+\varphi z)}=\frac{a_0+(a_1-a_0)z}{1-z-z^2}.\nonumber
 \end{align}
 
Since this relation is also true for $z=0$ (see above), this relation
is true for $\varphi |z|<1$.
\end{proof}

\begin{example}\label{ex16}
Applying Proposition \ref{p15} when $a_0=2$, $a_1=1$ and $z=1/3$, since
$F_{n,2}(2,1)=L_n$ for all $n\geq 0$, we have
\begin{align}
{\displaystyle\sum^{+\infty}_{n=0}}\frac{L_n}{3^n} &= \frac{2-\frac{1}{3}}
{\left(1-\frac{\varphi}{3}\right)\left(1-\frac{1}{3}+\frac{\varphi}{3}\right)}\nonumber\\
 &= \frac{\frac{5}{3}}
{\left(\frac{3-\varphi}{3}\right)\left(\frac{2+\varphi}{3}\right)}\nonumber\\
 &= \frac{15}{6+3\varphi-2\varphi-\varphi^2}\nonumber\\
 &= \frac{15}{6+\varphi-(\varphi+1)}\nonumber\\
 &= \frac{15}{5}=3.\nonumber
\end{align}
Therefore
$$
{\displaystyle\sum^{+\infty}_{n=0}}\frac{L_n}{3^{n+1}}=1.
$$
\end{example}

\begin{proposition}\label{p17}
Let $z$ be a real complex number such that $\varphi |z|<1$. Let
$a,b,r$ be three numbers. We have
$$
{\displaystyle\sum^{+\infty}_{n=0}}C_{n,2}(a,b,r)z^n
=b-a-r+az+\frac{z^2\left[(az+b)(1-z)+rz\right]}{1-2z+z^3}
$$
or equivalently
$$
{\displaystyle\sum^{+\infty}_{n=0}}C_{n,2}(a,b,r)z^n
=\frac{a(1-z)(2z-1)+b(1-z)^2+r(2z-1)}{1-2z+z^3}.
$$
\end{proposition}

This result can be derived routinely using the results we have derived so far. Although the proof is a little involved, but it follows essentially the same pattern as the previous result. So for the sake of brevity we shall omit it here.

\begin{example}\label{ex18}
Applying Proposition \ref{p17} when $z=1/2$, we have
$$
{\displaystyle\sum^{+\infty}_{n=0}}\frac{C_{n,2}(a,b,r)}{2^n}
=\frac{\frac{b}{4}}{\frac{1}{8}}=2b.
$$
So
$$
{\displaystyle\sum^{+\infty}_{n=0}}\frac{C_{n,2}(a,b,r)}{2^{n+1}}=b.
$$
Applying Proposition \ref{p17} when $a=-r=1$, $b=0$ and $z=1/3$, we have
$$
{\displaystyle\sum^{+\infty}_{n=0}}\frac{C_{n,2}(1,0,-1)}{3^n}
=\frac{\frac{2}{3}\left(-\frac{1}{3}\right)-\left(-\frac{1}{3}\right)}
{1-\frac{2}{3}+\frac{1}{27}}
=\frac{-\frac{2}{9}+\frac{1}{3}}{\frac{1}{3}+\frac{1}{27}}
=\frac{\frac{1}{9}}{\frac{10}{27}}=\frac{3}{10}.
$$
So,
$$
{\displaystyle\sum^{+\infty}_{n=0}}\frac{C_{n,2}(1,0,-1)}{3^{n+1}}=\frac{1}{10}.
$$
\end{example}

\begin{proposition}\label{p19}
Let $a_0,a_1$ be two numbers. We have
$$
F_{k+l,2}(a_0,a_1)=F_{l,2}(a_0,a_1)F_{k+1}+F_{l-1,2}(a_0,a_1)F_k,
\,\,\,\forall\,k\geq 0,\,\,\,\forall\,l\geq 1,
$$
or equivalently
$$
F_{k+l,2}(a_0,a_1)=F_{k,2}(F_{l,2}(a_0,a_1),F_{l+1,2}(a_0,a_1)),
\,\,\,\forall\,k\geq 0,\,\,\,\forall\,l\geq 0.
$$
\end{proposition}

\begin{proof}
Let $a_0,a_1$ be two numbers. From Proposition \ref{p12}, we know
that for $k+l\geq 0$ we have
$$
F_{k+l,2}(a_0,a_1)=a_0F_{k+l+1}+(a_1-a_0)F_{k+l}.
$$
Using Theorem 1.27, we have
\begin{align}
F_{k+l,2}(a_0,a_1) &= a_0(F_{l+1}F_{k+1}+F_lF_k)+(a_1-a_0)(F_lF_{k+1}+F_{l-1}F_k)\nonumber\\
 &= (a_0F_{l+1}+(a_1-a_0)F_l)F_{k+1}+(a_0F_l+(a_1-a_0)F_{l-1})F_k.\nonumber
 \end{align}
 
Using Proposition \ref{p12}, we get 
\begin{align}
F_{k+l,2}(a_0,a_1) &= F_{l,2}(a_0,a_1)F_{k+1}+F_{l-1,2}(a_0,a_1)F_k\nonumber\\
 &= F_{k,2}(F_{l,2}(a_0,a_1),F_{l+1,2}(a_0,a_1)).\nonumber
 \end{align}
\end{proof}

In a similar way we can obtain the following result by using the corresponding results dervide so far.

\begin{proposition}\label{p20}
Let $a,b,r$ be three numbers. We have:
$$
C_{k+l,2}(a,b,r)=C_{l-1,2}(a,b,r)F_k+C_{l,2}(a,b,r)F_{k+1}+r(F_{k+2}-1),
\,\,\,\forall\,k\geq 0,\,\,\,\forall\,l\geq 1,
$$
or equivalently
$$
C_{k+l,2}(a,b,r)=C_{k+2,2}(C_{l-1,2}(a,b,r),C_{l,2}(a,b,r),r),
\,\,\,\forall\,k\geq 0,\,\,\,\forall\,l\geq 1.
$$
\end{proposition}
We now have the following more general results.

\begin{theorem}\label{t21}
Let $a_0,a_1$ be two numbers. We have
$$
F_{k,2}(a_0F_{l-1,2}(a_0,a_1)+a_1F_{l,2}(a_0,a_1),
a_0F_{l,2}(a_0,a_1)+a_1F_{l+1,2}(a_0,a_1))
$$
$$
=F_{l,2}(a_0,a_1)F_{k+1,2}(a_0,a_1)+F_{l-1,2}(a_0,a_1)F_{k,2}(a_0,a_1),
\,\,\,\forall\,k\geq 0,\,\,\,\forall\,l\geq 1
$$
\end{theorem}

The proof is an easy application of Proposition 5.9 and we shall omit it here.

\begin{theorem}\label{t22}
Let $a,b,r$ be three numbers. We have
$$
C_{k,2}(aC_{l-1,2}(a,b,r)+bC_{l,2}(a,b,r),(a+r)C_{l,2}(a,b,r)+b(C_{l+1,2}(a,b,r)-r),
r(C_{l+1,2}(a,b,r)-r))
$$
\be
\label{E2}
=C_{l,2}(a,b,r)C_{k+1,2}(a,b,r)+C_{l-1,2}(a,b,r)C_{k,2}(a,b,r),
\,\,\,\forall\,k\geq 0,\,\,\,\forall\,l\geq 1
\ee
\end{theorem}

Using Proposition 5.5 and the principle of mathematical induction the above result can be verified. We omit the details here.

\begin{remark}\label{rem23}
Using Proposition \ref{p8} and using Proposition \ref{p12}, we can notice that
\be
\label{E3}
C_{n,2}(a,b,0)=F_{n,2}(b-a,a),\,\,\,\forall\,n\geq 0
\ee
Indeed, we have ($n\geq 0$)
$$
F_{n,2}(b-a,a)=(b-a)F_{n+1}+(a-b+a)F_n=(b-a)F_{n+1}+(2a-b)F_n
$$
Using the definition of the Fibonacci sequence, we have for $n\geq 2$
\begin{align}
F_{n,2}(b-a,a) &= (b-a)(F_n+F_{n-1})+(2a-b)(F_{n-1}+F_{n-2})\nonumber\\
 &= (b-a)(2F_{n-1}+F_{n-2})+(2a-b)(F_{n-1}+F_{n-2})\nonumber\\
 &= (2(b-a)+2a-b)F_{n-1}+(b-a+2a-b)F_{n-2}=bF_{n-1}+aF_{n-2}\nonumber\\
 &= aF_{n-2}+bF_{n-1}=C_{n,2}(a,b,0).\nonumber
 \end{align}
 
Since $F_{0,2}(b-a,a)=C_{0,2}(a,b,0)=b-a$ and
$F_{1,2}(b-a,a)=C_{1,2}(a,b,0)=a$, the formula derived above for
$n\geq 2$ is also true for $n=0$ and for $n=1$.
\\[0.1in]
Taking $r=0$ in Theorem \ref{t22}, it can be shown
that Theorem \ref{t21} is a particular case of Theorem
\ref{t22}. Indeed, since ($l\geq 1$):
$$
aC_{l,2}(a,b,0)+bC_{l+1,2}(a,b,0)-(aC_{l-1,2}(a,b,0)+bC_{l,2}(a,b,0))
$$
$$
=a(C_{l,2}(a,b,0)-C_{l-1,2}(a,b,0))+b(C_{l+1,2}(a,b,0)-C_{l,2}(a,b,0))
$$
and so ($l\geq 2$):
$$
aC_{l,2}(a,b,0)+bC_{l+1,2}(a,b,0)-(aC_{l-1,2}(a,b,0)+bC_{l,2}(a,b,0))
$$
$$
=aC_{l-2,2}(a,b,0)+bC_{l-1,2}(a,b,0)
$$
using the relation (\ref{E3}), we have ($k\geq 0$ and $l\geq 2$):
$$
C_{k,2}(aC_{l-1,2}(a,b,0)+bC_{l,2}(a,b,0),aC_{l,2}(a,b,0)+bC_{l+1,2}(a,b,0),0)
$$ 
$$
=F_{k,2}(aC_{l-2,2}(a,b,0)+bC_{l-1,2}(a,b,0),aC_{l-1,2}(a,b,0)+bC_{l,2}(a,b,0))
$$
$$
=F_{k,2}((b-a)C_{l-1,2}(a,b,0)+a(C_{l-2,2}(a,b,0)+C_{l-1,2}(a,b,0)),
(b-a)C_{l,2}(a,b,0)$$
$$+a(C_{l-1,2}(a,b,0)+C_{l,2}(a,b,0))).
$$
So ($k\geq 0$ and $l\geq 1$):
$$
C_{k,2}(aC_{l-1,2}(a,b,0)+bC_{l,2}(a,b,0),aC_{l,2}(a,b,0)+bC_{l+1,2}(a,b,0),0)
$$
$$
=F_{k,2}((b-a)C_{l-1,2}(a,b,0)+aC_{l,2}(a,b,0),(b-a)C_{l,2}(a,b,0)+aC_{l+1,2}(a,b,0))
$$
$$
=F_{k,2}((b-a)F_{l-1,2}(b-a,a)+aF_{l,2}(b-a,a),(b-a)F_{l,2}(b-a,a)+aF_{l+1,2}(b-a,a)).
$$
Moreover, from Theorem \ref{t22}, we have ($k\geq 0$ and $l\geq 1$):
$$
C_{k,2}(aC_{l-1,2}(a,b,0)+bC_{l,2}(a,b,0),aC_{l,2}(a,b,0)+bC_{l+1,2}(a,b,0),0)
$$
$$
=C_{l,2}(a,b,0)C_{k+1,2}(a,b,0)+C_{l-1,2}(a,b,0)C_{k,2}(a,b,0)
$$
$$
=F_{l,2}(b-a,a)F_{k+1,2}(b-a,a)+F_{l-1,2}(b-a,a)F_{k,2}(b-a,a).
$$
Therefore ($k\geq 0$ and $l\geq 1$):
$$
F_{k,2}((b-a)F_{l-1,2}(b-a,a)+aF_{l,2}(b-a,a),(b-a)F_{l,2}(b-a,a)+aF_{l+1,2}(b-a,a))
$$
\be
\label{E4}
=F_{l,2}(b-a,a)F_{k+1,2}(b-a,a)+F_{l-1,2}(b-a,a)F_{k,2}(b-a,a)
\ee
which is equivalent to Theorem \ref{t21} when $a_0$ is replaced by
$b-a$ and when $a_1$ is replaced by $a$. Besides, taking $a=b=1$ in the
relation (\ref{E4}), using Theorem 1.27, since
$F_{n,2}(0,1)=F_n$, for all $n\geq 0$, we get ($l\geq 0$):
$$
F_{k,2}(F_l,F_{l+1})=F_{k+l},\,\,\,\forall\,k\geq 0
$$
\end{remark}

\begin{definition}\label{d29}
Let $a,b,r$ be three numbers, let $n\geq 0$ be a natural number and
let $l$ be a non-zero positive integer. The sequences
$(x_{n,l}(a,b,r))$, $(y_{n,l}(a,b,r))$ and $(z_{n,l}(a,b,r))$ are
defined by ($n\geq 0$ and $l\geq 1$):
\begin{align}
x_{n+1,l}(a,b,r) &= x_{n,l}(a,b,r)C_{l-1,2}(x_{n,l}(a,b,r),y_{n,l}(a,b,r),z_{n,l}(a,b,r))\nonumber\\
 & \quad +y_{n,l}(a,b,r)C_{l,2}(x_{n,l}(a,b,r),y_{n,l}(a,b,r),z_{n,l}(a,b,r))\nonumber
 \end{align}
 \begin{align}
 y_{n+1,l}(a,b,r) &= y_{n,l}(a,b,r)C_{l-1,2}(x_{n,l}(a,b,r),y_{n,l}(a,b,r),z_{n,l}(a,b,r))+(x_{n,l}(a,b,r)\nonumber\\
 & \quad +y_{n,l}(a,b,r)+z_{n,l}(a,b,r))C_{l,2}(x_{n,l}(a,b,r),y_{n,l}(a,b,r),z_{n,l}(a,b,r))\nonumber
 \end{align}
\begin{align}
r_{n+1,l}(a,b,r) &= z_{n,l}(a,b,r)(C_{l-1,2}(x_{n,l}(a,b,r),y_{n,l}(a,b,r),z_{n,l}(a,b,r))\nonumber\\
 & \quad +C_{l,2}(x_{n,l}(a,b,r),y_{n,l}(a,b,r),z_{n,l}(a,b,r)))
\nonumber
\end{align}
and for $l\geq 1$ 
$$
\begin{array}{l}
x_{0,l}(a,b,r)=aC_{l-1,2}(a,b,r)+bC_{l,2}(a,b,r)
\\
y_{0,l}(a,b,r)=bC_{l-1,2}(a,b,r)+(a+b+r)C_{l,2}(a,b,r)
\\
z_{0,l}(a,b,r)=r(C_{l-1,2}(a,b,r)+C_{l,2}(a,b,r))
\end{array}
$$
\end{definition}

\noindent
In the following, when there is no ambiguity and when it is possible,
we will abbreviate the 
notations used for terms of sequences $(x_{n,l}(a,b,r))$,
$(y_{n,l}(a,b,r))$ and $(z_{n,l}(a,b,r))$. More precisely, if $a,b,r$ don't
take particular values, then we will substitute $x_{n,l}$, $y_{n,l}$,
$z_{n,l}$ for $x_{n,l}(a,b,r)$, $y_{n,l}(a,b,r)$, $z_{n,l}(a,b,r)$
respectively. Thus, the recurrence
relations which define the sequences $(x_{n,l}(a,b,r))$,
$(y_{n,l}(a,b,r))$ and $(z_{n,l}(a,b,r))$ can be rewritten as 
($n\geq 0$ and $l\geq 1$):
$$
\begin{array}{l}
x_{n+1,l}=x_{n,l}C_{l-1,2}(x_{n,l},y_{n,l},z_{n,l})+y_{n,l}C_{l,2}(x_{n,l},y_{n,l},z_{n,l})
\\
y_{n+1,l}=y_{n,l}C_{l-1,2}(x_{n,l},y_{n,l},z_{n,l})+(x_{n,l}+y_{n,l}+z_{n,l})
C_{l,2}(x_{n,l},y_{n,l},z_{n,l})
\\
r_{n+1,l}=z_{n,l}(C_{l-1,2}(x_{n,l},y_{n,l},z_{n,l})+C_{l,2}(x_{n,l},y_{n,l},z_{n,l})).
\end{array}
$$

\begin{proposition}\label{p30}
Let $n\geq 0$ be a natural number and
let $l$ be a non-zero positive integer. We have
\begin{align}
C_{k,2}(x_{n+1,l},y_{n+1,l},z_{n+1,l})
 &= C_{l,2}(x_{n,l},y_{n,l},z_{n,l})C_{k+1,2}(x_{n,l},y_{n,l},z_{n,l})\nonumber\\
 & \quad +C_{l-1,2}(x_{n,l},y_{n,l},z_{n,l})C_{k,2}(x_{n,l},y_{n,l},z_{n,l}).\nonumber
\end{align}
\end{proposition}

\begin{proof}
This proposition is a direct
consequence of Definition \ref{d4}, Definition \ref{d29} and Theorem \ref{t22}.
\end{proof}

\begin{proposition}\label{p31}
Let $n\geq 0$ be a natural number and
let $l$ be a non-zero positive integer. We have ($n\geq 0$ and $l\geq 1$)
$$
(y_{n,l}-x_{n,l})(z_{n,l}y_{n+1,l}-y_{n,l}z_{n+1,l})
=(x_{n,l}+z_{n,l})(z_{n,l}x_{n+1,l}-x_{n,l}z_{n+1,l}).
$$
or equivalently ($n\geq 0$ and $l\geq 1$)
$$
z_{n,l}(x_{n,l}+z_{n,l})x_{n+1,l}+z_{n,l}(x_{n,l}-y_{n,l})y_{n+1,l}
-(x_{n,l}(x_{n,l}+y_{n,l}+z_{n,l})-y^2_{n,l})z_{n+1,l}=0.
$$
\end{proposition}

\begin{proof}
In the following, $n$ denotes a natural
number ($n\geq 0$) and $l$ denotes a non-zero positive integer ($l\geq 1$).
From Definition \ref{d29}, we have ($n\geq 0$ and $l\geq 1$)
$$
z_{n,l}x_{n+1,l}-x_{n,l}z_{n+1,l}=x_{n,l}z_{n,l}C_{l-1,2}(x_{n,l},y_{n,l},z_{n,l})
+y_{n,l}z_{n,l}C_{l,2}(x_{n,l},y_{n,l},z_{n,l})
$$
$$
-x_{n,l}z_{n,l}C_{l-1,2}(x_{n,l},y_{n,l},z_{n,l})
-x_{n,l}z_{n,l}C_{l,2}(x_{n,l},y_{n,l},z_{n,l}).
$$
So
\be
\label{E14}
z_{n,l}x_{n+1,l}-x_{n,l}z_{n+1,l}=z_{n,l}(y_{n,l}-x_{n,l})C_{l,2}(x_{n,l},y_{n,l},z_{n,l}),
\,\,\,\forall\,n\geq 0,\,\,\,\forall\,l\geq 1
\ee
Moreover, we have ($n\geq 0$ and $l\geq 1$):
$$
z_{n,l}y_{n+1,l}-y_{n,l}z_{n+1,l}
=y_{n,l}z_{n,l}C_{l-1,2}(x_{n,l},y_{n,l},z_{n,l})
+z_{n,l}(x_{n,l}+y_{n,l}+z_{n,l})C_{l,2}(x_{n,l},y_{n,l},z_{n,l})
$$
$$
-y_{n,l}z_{n,l}C_{l-1,2}(x_{n,l},y_{n,l},z_{n,l})
-y_{n,l}z_{n,l}C_{l,2}(x_{n,l},y_{n,l},z_{n,l}).
$$
So
\be
\label{E15}
z_{n,l}y_{n+1,l}-y_{n,l}z_{n+1,l}=z_{n,l}(x_{n,l}+z_{n,l})C_{l,2}(x_{n,l},y_{n,l},z_{n,l})
\,\,\,\forall\,n\geq 0,\,\,\,\forall\,l\geq 1
\ee
Taking $(x_{n,l}+z_{n,l})\,(\ref{E14})-(y_{n,l}-x_{n,l})\,(\ref{E15})$
side by side, we get
$$
(x_{n,l}+z_{n,l})(z_{n,l}x_{n+1,l}-x_{n,l}z_{n+1,l})
-(y_{n,l}-x_{n,l})(z_{n,l}y_{n+1,l}-y_{n,l}z_{n+1,l})=0
$$
and so
$$
(x_{n,l}+z_{n,l})(z_{n,l}x_{n+1,l}-x_{n,l}z_{n+1,l})
=(y_{n,l}-x_{n,l})(z_{n,l}y_{n+1,l}-y_{n,l}z_{n+1,l}).
$$
It proves the first part of Proposition \ref{p31}. The second part of
Proposition \ref{p31} follows from its first part. Indeed, from the
first part of Proposition \ref{p31}, we have ($n\geq 0$ and $l\geq 1$)
$$
(x_{n,l}+z_{n,l})z_{n,l}x_{n+1,l}-(x_{n,l}+z_{n,l})x_{n,l}z_{n+1,l}
=(y_{n,l}-x_{n,l})z_{n,l}y_{n+1,l}-(y_{n,l}-x_{n,l})y_{n,l}z_{n+1,l}
$$
$$
z_{n,l}(x_{n,l}+z_{n,l})x_{n+1,l}+z_{n,l}(x_{n,l}-y_{n,l})y_{n+1,l}
-((x_{n,l}+z_{n,l})x_{n,l}+(x_{n,l}-y_{n,l})y_{n,l})z_{n+1,l}=0
$$
$$
z_{n,l}(x_{n,l}+z_{n,l})x_{n+1,l}+z_{n,l}(x_{n,l}-y_{n,l})y_{n+1,l}
-(x^2_{n,l}+z_{n,l}x_{n,l}+x_{n,l}y_{n,l}-y^2_{n,l})z_{n+1,l}=0
$$
$$
z_{n,l}(x_{n,l}+z_{n,l})x_{n+1,l}+z_{n,l}(x_{n,l}-y_{n,l})y_{n+1,l}
-(x_{n,l}(x_{n,l}+z_{n,l}+y_{n,l})-y^2_{n,l})z_{n+1,l}=0
$$
It proves the second part of Proposition \ref{p31}.
\end{proof}

\begin{definition}\label{d32}
Let $\mathbb{K}$ be a field. Let $l$ be a non-zero positive integer
($l\geq 1$). The function $F_l$ is defined on 
$\mathbb{K}^3$ by ($l\geq 1$ and $(x,y,z)\in\mathbb{K}^3$)
\begin{align}
F_l(x,y,z) &= (xC_{l-1,2}(x,y,z)+yC_{l,2}(x,y,z),yC_{l-1,2}(x,y,z)\nonumber\\
 & \quad +(x+y+z)C_{l,2}(x,y,z),
z(C_{l-1,2}(x,y,z)+C_{l,2}(x,y,z))).\nonumber
\end{align}
\end{definition}

\begin{remark}\label{r33}
From Definition \ref{d29} and from Definition \ref{d32}, we have
($n\geq 0$ and $l\geq 1$)
$$
F_l(x_{n,l},y_{n,l},z_{n,l})=(x_{n+1,l},y_{n+1,l},z_{n+1,l})
$$
So, from Proposition \ref{p30}, we have
\begin{align}
C_{k,2}(F_l(x_{n,l},y_{n,l},z_{n,l})) & =C_{l,2}(x_{n,l},y_{n,l},z_{n,l})C_{k+1,2}(x_{n,l},y_{n,l},z_{n,l})\nonumber\\
 & \quad +C_{l-1,2}(x_{n,l},y_{n,l},z_{n,l})C_{k,2}(x_{n,l},y_{n,l},z_{n,l}).\nonumber
\end{align}
\end{remark}

\begin{proposition}\label{pp34}
Let $n\geq 0$ be a natural number and
let $l$ be a non-zero positive integer. We have ($n\geq 0$ and $l\geq 1$)
$$
x_{n,l}(1/2,1/2,-1/2)=y_{n,l}(1/2,1/2,-1/2)=1/2
$$
$$
z_{n,l}(1/2,1/2,-1/2)=-1/2.
$$
In other words, $(1/2,1/2,-1/2)$ is a fixed point of the function
$F_l$ for all $l\geq 1$.
\end{proposition}

\begin{proof}
Let $n\geq 0$ be a natural number and
let $l$ be a non-zero positive integer. Let us prove
Proposition \ref{pp34} by induction on the integer $n\geq 0$ for all
$l\geq 1$. Using Definition \ref{d4}, we have
$$
C_{0,2}(1/2,1/2,-1/2)=\frac{1}{2}-\frac{1}{2}-\left(-\frac{1}{2}\right)
=\frac{1}{2}.
$$

Moreover, from Proposition \ref{p8}, 
using the definition of the Fibonacci sequence, we have ($n\geq 2$)
$$
C_{n,2}(1/2,1/2,-1/2)=\frac{F_{n-2}}{2}+\frac{F_{n-1}}{2}-\frac{1}{2}(F_n-1)
=\frac{F_{n-2}+F_{n-1}-F_n}{2}+\frac{1}{2}=\frac{1}{2}.
$$
So
\be
\label{E16}
C_{n,2}(1/2,1/2,-1/2)=\frac{1}{2},\,\,\,\forall\,n\geq 0.
\ee
Using Definition \ref{d29} and using Equation (\ref{E16}), it gives ($l\geq 1$)
$$
\begin{array}{l}
x_{0,l}(1/2,1/2,-1/2)=\frac{1}{2}\times\frac{1}{2}+\frac{1}{2}\times\frac{1}{2}
=\frac{1}{2},
\\
y_{0,l}(1/2,1/2,-1/2)=\frac{1}{2}\times\frac{1}{2}
+\left(\frac{1}{2}+\frac{1}{2}-\frac{1}{2}\right)\frac{1}{2}
=\frac{1}{2},
\\
z_{0,l}(1/2,1/2,-1/2)=-\frac{1}{2}\left(\frac{1}{2}+\frac{1}{2}\right)
=-\frac{1}{2}.
\end{array}
$$
Hence, we verify that Proposition \ref{pp34} is true for $n=0$ and
for all $l\geq 1$. Let us assume that Proposition \ref{pp34} is true
up to an integer $n\geq 0$  and for all $l\geq 1$. Using again Definition
\ref{d29} and using Equation (\ref{E16}), we have ($n\geq 0$ and $l\geq 1$)
$$
\begin{array}{l}
x_{n+1,l}(1/2,1/2,-1/2)=\frac{1}{2}\times\frac{1}{2}+\frac{1}{2}\times\frac{1}{2}
=\frac{1}{2},
\\
y_{n+1,l}(1/2,1/2,-1/2)=\frac{1}{2}\times\frac{1}{2}
+\left(\frac{1}{2}+\frac{1}{2}-\frac{1}{2}\right)\frac{1}{2},
\\
z_{n+1,l}(1/2,1/2,-1/2)=-\frac{1}{2}\left(\frac{1}{2}+\frac{1}{2}\right)
=-\frac{1}{2}.
\end{array}
$$
Thus, if Proposition \ref{pp34} is true up to an integer $n\geq 0$,
then Proposition \ref{pp34} is true for $n+1$. Thus we have proved Proposition
\ref{p34} by induction on the integer $n\geq 0$ for all
$l\geq 1$. Using Remark \ref{r33}, we get ($l\geq 1$)
$$
F_l(1/2,1/2,-1/2)=(1/2,1/2,-1/2).
$$
Therefore, $(1/2,1/2,-1/2)$ is a fixed point of the function $F_l$ for
all $l\geq 1$.
\end{proof}

The results presented in this section can be further generalized to other class of sequences. For one such aspect, the reader can refer to \cite{hmsa}.

\section{Some results on Generalized Fibonacci polynomial sequences}

In this section, we introduce some generalized Fibonacci polynomial
sequences and we give some properties about these polynomial
sequences.

\begin{definition}\label{d6.1}

Let $k$ be an integer which is greater than $2$ and let
$a_0,\ldots, a_{k-1}$ be $k$ numbers.

The polynomial sequence $(F^{(1)}_{n,k}(a_0,\ldots,a_{k-1};x))$ in one
indeterminate $x$ is defined by ($k\geq 2$): 
$$
F^{(1)}_{n,k}(a_0,\ldots,a_{k-1};x)=F^{(1)}_{n-1,k}(a_0,\ldots,a_{k-1};x)
+xF^{(1)}_{n-k,k}(a_0,\ldots,a_{k-1};x),\,\,\,\forall\,n\geq k
$$
with:
$$
F^{(1)}_{i,k}(a_0,\ldots,a_{k-1};x)=a_i,\,\,\,\forall\,i\in\{0,\ldots,k-1\}
$$
The $k$-Fibonacci numbers sequence $(F_{n,k}(a_0,\ldots,a_{k-1}))$
with initial conditions 
$a_0,\ldots,a_{k-1}$ are obtained from this polynomial sequence
by substituting $x$ by $1$ in the sequence
$(F^{(1)}_{n,k}(a_0,\ldots,a_{k-1};x))$. This polynomial sequence is called
the $k$-Fibonacci polynomial sequence of the first kind with initial conditions
$a_0,\ldots,a_{k-1}$. 
\end{definition}

\textit{Case $k=2$}

Table of the first polynomial terms of sequence $(F^{(1)}_{n,2}(0,1;x))$
$$
\begin{array}{l}
F^{(1)}_{0,2}(0,1;x)=0\\
F^{(1)}_{1,2}(0,1;x)=1\\
F^{(1)}_{2,2}(0,1;x)=1\\
F^{(1)}_{3,2}(0,1;x)=1+x\\
F^{(1)}_{4,2}(0,1;x)=1+2x\\
F^{(1)}_{5,2}(0,1;x)=1+3x+x^2\\
F^{(1)}_{6,2}(0,1;x)=1+4x+3x^2\\
\end{array}
$$
Table of the first polynomial terms of sequence $(F^{(1)}_{n,2}(1,0;x))$
$$
\begin{array}{l}
F^{(1)}_{0,2}(1,0;x)=1\\
F^{(1)}_{1,2}(1,0;x)=0\\
F^{(1)}_{2,2}(1,0;x)=x\\
F^{(1)}_{3,2}(1,0;x)=x\\
F^{(1)}_{4,2}(1,0;x)=x(x+1)\\
F^{(1)}_{5,2}(1,0;x)=x(2x+1)\\
F^{(1)}_{6,2}(1,0;x)=x(x^2+3x+1)\\
\end{array}
$$

\begin{property}\label{p6.2}
Let $n$ be a non-zero positive integer. We have:
$$
F^{(1)}_{n,2}(0,1;x)={\displaystyle\sum^{\lfloor\frac{n-1}{2}\rfloor}_{k=0}}
\binom{n-k-1}{k}x^k
$$ 
$$
F^{(1)}_{n,2}(1,0;x)=xF^{(1)}_{n-1,2}(0,1;x)
$$
\end{property}

\begin{proof}
Let prove the first part of Property \ref{p6.2} by induction on the
integer $n>0$. We have
$$
F^{(1)}_{1,2}(0,1;x)=1={n-1\choose 0}x^0
={\displaystyle\sum^0_{k=0}}{n-k-1\choose k}x^k
$$ 
Thus, we verify that the first part of Property \ref{p6.2} is
true for $n=1$.
Let assume that Property \ref{p6.2} is true up to an integer
$n>0$. Using Definition \ref{d6.1}, we have
$$
F^{(1)}_{n+1,2}(0,1;x)=F^{(1)}_{n,2}(0,1;x)+xF^{(1)}_{n-1,2}(0,1;x)
$$
Using the assumption, it gives:
$$
F^{(1)}_{n+1,2}(0,1;x)={\displaystyle\sum^{\lfloor\frac{n-1}{2}\rfloor}_{k=0}}
{n-k-1\choose k}x^k+{\displaystyle\sum^{\lfloor\frac{n-2}{2}\rfloor}_{k=0}}
{n-k-2\choose k}x^{k+1}
$$
Taking the change of label $k\rightarrow m=k+1$ in the second sum of
the right hand side of the previous equation, after renaming $m$ by
$k$, we have (reccall that $\lfloor x+1\rfloor=\lfloor x\rfloor+1$,
$\forall x\in\mathbb{R}$)
$$
F^{(1)}_{n+1,2}(0,1;x)={\displaystyle\sum^{\lfloor\frac{n-1}{2}\rfloor}_{k=0}}
{n-k-1\choose k}x^k+{\displaystyle\sum^{\lfloor\frac{n}{2}\rfloor}_{k=1}}
{n-k-1\choose k-1}x^k
$$
Or
$$
\lfloor\frac{n}{2}\rfloor=\left\{\begin{array}{ccc}
\lfloor\frac{n-1}{2}\rfloor+1 & if & n\equiv 0\pmod 2
\\\\
\lfloor\frac{n-1}{2}\rfloor & if & n\equiv 1\pmod 2
\end{array}\right.
$$
If $n$ is odd, then
$\lfloor\frac{n}{2}\rfloor=\lfloor\frac{n-1}{2}\rfloor$ and we have
$$
F^{(1)}_{n+1,2}(0,1;x)=1+{\displaystyle\sum^{\lfloor\frac{n}{2}\rfloor}_{k=1}}
{n-k-1\choose k}x^k+{\displaystyle\sum^{\lfloor\frac{n}{2}\rfloor}_{k=1}}
{n-k-1\choose k-1}x^k
$$
Rearranging the different terms of this equation, it comes that ($n$ odd)
$$
F^{(1)}_{n+1,2}(0,1;x)=1+{\displaystyle\sum^{\lfloor\frac{n}{2}\rfloor}_{k=1}}
\left\{{n-k-1\choose k}+{n-k-1\choose k-1}\right\}x^k
$$
Using the combinatorial identity
$$
{n-k-1\choose k}+{n-k-1\choose k-1}={n-k\choose k}
$$
if $n$ is odd, then we have
$$
F^{(1)}_{n+1,2}(0,1;x)=1+{\displaystyle\sum^{\lfloor\frac{n}{2}\rfloor}_{k=1}}
{n-k\choose k}x^k
$$
$$
F^{(1)}_{n+1,2}(0,1;x)={\displaystyle\sum^{\lfloor\frac{n}{2}\rfloor}_{k=0}}
{n-k\choose k}x^k={\displaystyle\sum^{\lfloor\frac{n+1-1}{2}\rfloor}_{k=0}}
{n+1-k-1\choose k}x^k
$$
If $n$ is even, then
$\lfloor\frac{n}{2}\rfloor=\lfloor\frac{n-1}{2}\rfloor+1$ and we have
$$
F^{(1)}_{n+1,2}(0,1;x)=1+{\displaystyle\sum^{\lfloor\frac{n-1}{2}\rfloor}_{k=1}}
\left\{{n-k-1\choose k}+{n-k-1\choose k-1}\right\}x^k
+{n-\lfloor\frac{n}{2}\rfloor-1\choose \lfloor\frac{n}{2}\rfloor-1}
x^{\lfloor\frac{n}{2}\rfloor}
$$
Using again the combinatorial identity
$$
{n-k-1\choose k}+{n-k-1\choose k-1}={n-k\choose k}
$$
it gives
$$
F^{(1)}_{n+1,2}(0,1;x)=1+{\displaystyle\sum^{\lfloor\frac{n-1}{2}\rfloor}_{k=1}}
{n-k\choose k}x^k
+{n-\lfloor\frac{n}{2}\rfloor-1\choose \lfloor\frac{n}{2}\rfloor-1}
x^{\lfloor\frac{n}{2}\rfloor}
$$
Using the definition of binomial coefficients, it can be
shown that ($k>0$)
$$
{n-k\choose k}=\frac{n-k}{k}{n-k-1\choose k-1}
$$
Or
$$
n=\left\{\begin{array}{ccc}
2\lfloor\frac{n}{2}\rfloor & if & n\equiv 0\pmod 2
\\\\
2\lfloor\frac{n}{2}\rfloor+1 & if & n\equiv 1\pmod 2
\end{array}\right.
$$
In particular,  when $n$ is even, $n=2\lfloor\frac{n}{2}\rfloor$ and
so $n-\lfloor\frac{n}{2}\rfloor=\lfloor\frac{n}{2}\rfloor$. Accordingly, we have
$$
{n-\lfloor\frac{n}{2}\rfloor\choose \lfloor\frac{n}{2}\rfloor}
={n-\lfloor\frac{n}{2}\rfloor-1\choose \lfloor\frac{n}{2}\rfloor-1}
$$
If $n$ is even, then we have
$$
F^{(1)}_{n+1,2}(0,1;x)=1+{\displaystyle\sum^{\lfloor\frac{n-1}{2}\rfloor}_{k=1}}
{n-k\choose k}x^k
+{n-\lfloor\frac{n}{2}\rfloor\choose \lfloor\frac{n}{2}\rfloor}
x^{\lfloor\frac{n}{2}\rfloor}
$$
$$
F^{(1)}_{n+1,2}(0,1;x)={\displaystyle\sum^{\lfloor\frac{n}{2}\rfloor}_{k=0}}
{n-k\choose k}x^k={\displaystyle\sum^{\lfloor\frac{n+1-1}{2}\rfloor}_{k=0}}
{n+1-k-1\choose k}x^k
$$
So, the first part of Property \ref{p6.2} is proved by induction
on the integer $n>0$.

Afterwards, let prove the second part of Property \ref{p6.2} by
induction on the integer $n>0$. We have
$$
F^{(1)}_{1,2}(1,0;x)=0=xF^{(1)}_{0,2}(0,1;x)
$$ 
Thus, we verify that the second part of Property \ref{p6.2} is
true for $n=1$.
Let assume that the second part of Property \ref{p6.2} is true up
to an integer $n>0$. Using Definition \ref{d6.1}, we have
$$
F^{(1)}_{n+1,2}(1,0;x)=F^{(1)}_{n,2}(1,0;x)+xF^{(1)}_{n-1,2}(1,0;x)
$$
Using the assumption, it gives:
$$
F^{(1)}_{n+1,2}(1,0;x)=x(F^{(1)}_{n,2}(0,1;x)+xF^{(1)}_{n-1,2}(0,1;x))
=xF^{(1)}_{n+1,2}(0,1;x)
$$
So, the second part of Property \ref{p6.2} is proved by induction
on the integer $n>0$.
\end{proof}

\begin{property}\label{p6.3}
The generating function of the polynomials $F^{(1)}_{n,2}(0,1;x)$ is
given by
$$
{\mathcal F}^{(1)}_2(0,1;x,y)={\displaystyle\sum^{+\infty}_{n=0}}
F^{(1)}_{n,2}(0,1;x)y^n=\frac{y}{1-y-xy^2}
$$
where
$$
y\neq\left\{\begin{array}{ccc}
1 & if & x=0
\\
\frac{-1\pm\sqrt{1+4x}}{2x} & if & x\neq 0
\end{array}\right.
$$
\end{property}

\begin{proof}
The generating function of the polynomials $F^{(1)}_{n,2}(0,1;x)$ is
defined by
$$
{\mathcal F}^{(1)}_2(0,1;x,y)={\displaystyle\sum^{+\infty}_{n=0}}
F^{(1)}_{n,2}(0,1;x)y^n
$$
Since $F^{(1)}_{0,2}(0,1;x)=0$ and since $F^{(1)}_{1,2}(0,1;x)=1$, we have
$$
{\mathcal F}^{(1)}_2(0,1;x,y)={\displaystyle\sum^{+\infty}_{n=1}}
F^{(1)}_{n,2}(0,1;x)y^n={\displaystyle\sum^{+\infty}_{n=0}}
F^{(1)}_{n+1,2}(0,1;x)y^{n+1}=y+{\displaystyle\sum^{+\infty}_{n=1}}
F^{(1)}_{n+1,2}(0,1;x)y^{n+1}
$$
where in the sum over $n$, we performed the change of label
$n\rightarrow m=n-1$ and we renamed $m$ by $n$. 
Using Definition \ref{d6.1}, it gives
$$
{\mathcal F}^{(1)}_2(0,1;x,y)=y+{\displaystyle\sum^{+\infty}_{n=1}}
(F^{(1)}_{n,2}(0,1;x)+xF^{(1)}_{n-1,2}(0,1;x))y^{n+1}
$$
Expanding the sum over $n$ of the right hand side of the previous
equation, it comes that 
$$
{\mathcal F}^{(1)}_2(0,1;x,y)=y+{\displaystyle\sum^{+\infty}_{n=1}}
F^{(1)}_{n,2}(0,1;x)y^{n+1}+x{\displaystyle\sum^{+\infty}_{n=1}}F^{(1)}_{n-1,2}(0,1;x)
y^{n+1}
$$
Or, performing again the change of label $n\rightarrow m=n-1$ in the second
sum over $n$ of the right hand side of the previous equation, after renaming
$m$ by $n$, we have
$$
{\mathcal F}^{(1)}_2(0,1;x,y)=y+{\displaystyle\sum^{+\infty}_{n=1}}
F^{(1)}_{n,2}(0,1;x)y^{n+1}+x{\displaystyle\sum^{+\infty}_{n=1}}F^{(1)}_{n,2}(0,1;x)
y^{n+2}
$$
$$
{\mathcal F}^{(1)}_2(0,1;x,y)=y+y{\displaystyle\sum^{+\infty}_{n=1}}
F^{(1)}_{n,2}(0,1;x)y^n+xy^2{\displaystyle\sum^{+\infty}_{n=1}}F^{(1)}_{n,2}(0,1;x)
y^n
$$
Using the definition of the generating function ${\mathcal F}^{(1)}_2(0,1;x,y)$, 
we have: 
$$
{\mathcal F}^{(1)}_2(0,1;x,y)=y+y{\mathcal F}^{(1)}_2(0,1;x,y)
+xy^2{\mathcal F}^{(1)}_2(0,1;x,y)
$$
Therefore
$$
{\mathcal F}^{(1)}_2(0,1;x,y)=\frac{y}{1-y-xy^2}
$$
where
$$
y\neq\left\{\begin{array}{ccc}
1 & if & x=0
\\
\frac{-1\pm\sqrt{1+4x}}{2x} & if & x\neq 0
\end{array}\right.
$$
\end{proof}

\begin{property}\label{p6.4}
Let $a_0,a_1$ be two integers. We have:
$$
F^{(1)}_{n,2}(a_0,a_1;x)=a_0F^{(1)}_{n,2}(1,0;x)+a_1F^{(1)}_{n,2}(0,1;x),
\,\,\,\forall\,n\geq 0
$$
$$
F^{(1)}_{n,2}(a_0,a_1;x)=a_0xF^{(1)}_{n-1,2}(0,1;x)+a_1F^{(1)}_{n,2}(0,1;x),
\,\,\,\forall\,n\geq 1
$$
\end{property}

\begin{proof}
Let prove the first part of Property \ref{p6.4} by induction on the
integer $n\geq 0$. The second part of Property \ref{p6.4} follows from
Property \ref{p6.2}.
Since $F^{(1)}_{0,2}(0,1;x)=0$ and since $F^{(1)}_{0,2}(1,0;x)=1$, we have
$$
F^{(1)}_{0,2}(a_0,a_1;x)=a_0=a_0F^{(1)}_{0,2}(1,0;x)+a_1F^{(1)}_{0,2}(0,1;x)
$$
Thus, we verify that the first part of Property \ref{p6.4} is true for $n=0$.
Let assume that the first part of Property \ref{p6.4} is true up to an
integer $n\geq 0$. Using Definition \ref{d6.1}, we have ($n>0$)
$$
F^{(1)}_{n+1,2}(a_0,a_1;x)=F^{(1)}_{n,2}(a_0,a_1;x)+xF^{(1)}_{n-1,2}(a_0,a_1;x)
$$
Using the assumption, we have ($n>0$)
$$
F^{(1)}_{n+1,2}(a_0,a_1;x)=a_0F^{(1)}_{n,2}(1,0;x)+a_1F^{(1)}_{n,2}(0,1;x)
+x(a_0F^{(1)}_{n-1,2}(1,0;x)+a_1F^{(1)}_{n-1,2}(0,1;x))
$$
Rearranging the different terms of the right hand side of the previous
equation, it gives ($n>0$)
$$
F^{(1)}_{n+1,2}(a_0,a_1;x)=a_0(F^{(1)}_{n,2}(1,0;x)+xF^{(1)}_{n-1,2}(1,0;x))
+a_1(F^{(1)}_{n,2}(0,1;x)+xF^{(1)}_{n-1,2}(0,1;x))
$$
Using Definition \ref{d6.1}, we obtain ($n\geq 0$)
$$
F^{(1)}_{n+1,2}(a_0,a_1;x)=a_0F^{(1)}_{n+1,2}(1,0;x)+a_1F^{(1)}_{n+1,2}(0,1;x)
$$
So, the first part of Property \ref{p6.4} is proved by induction on
the integer $n$.
\end{proof}

\begin{remark}
In particular, if $a_0=a_1=1$, then using the recurrence relation of
the sequence $(F^{(1)}_{n,2}(0,1;x))$ (see Definition \ref{d6.1}), we obtain:
$$
F^{(1)}_{n,2}(1,1;x)=F^{(1)}_{n+1,2}(0,1;x),\,\,\,\forall n\geq 0
$$
\end{remark}

\begin{property}\label{p6.6}

Let $a_0$ and $a_1$ be two numbers and let $n$ be a positive
integer. We have ($n\geq 0$) 
$$
F^{(1)}_{n,2}(a_0,a_1;x)=\left\{\begin{array}{ccc}
\frac{a_0}{2^n}+\frac{n\left(a_1-\frac{a_0}{2}\right)}{2^{n-1}} & if &
x=-\frac{1}{4} 
\\\\
\left(\frac{a_0(\varphi(x)-1)+a_1}{2\varphi(x)-1}\right)\varphi(x)^n
+\left(\frac{a_0\varphi(x)-a_1}{2\varphi(x)-1}\right)(1-\varphi(x))^n
& if & x\neq-\frac{1}{4} 
\end{array}\right.
$$
In particular, we have
$$
F^{(1)}_{n,2}(0,1;x)=\left\{\begin{array}{ccc}
\frac{n}{2^{n-1}} & if & x=-\frac{1}{4}\\\\
\frac{\varphi(x)^n-(1-\varphi(x))^n}{2\varphi(x)-1} & if & x\neq-\frac{1}{4}
\end{array}\right.
$$
$$
F^{(1)}_{n,2}(1,0;x)=\left\{\begin{array}{ccc}
\frac{1-n}{2^n} & if & x=-\frac{1}{4}\\\\
x\left(\frac{\varphi(x)^{n-1}-(1-\varphi(x))^{n-1}}
{2\varphi(x)-1}\right) & if & x\neq-\frac{1}{4}
\end{array}\right.
$$
where
$$
\varphi(x)=\frac{1+\sqrt{1+4x}}{2}
$$
which verify
$$
\varphi^2(x)=\varphi(x)+x
$$
or
$$
\varphi(x)(\varphi(x)-1)=x
$$
\end{property}

\begin{proof}
Property \ref{p6.6} can be proved easily by induction or Property can be proved
in the same way as Property \ref{p11}.
\end{proof}

\begin{theorem}\label{t6.7}
Let $a_0$ and $a_1$ be two numbers and let $n$ and $m$ be two positive
integers. If $x\neq-\frac{1}{4}$, then we have
$$
F^{(1)}_{n,2}(a_0,a_1;x)F^{(1)}_{m+1,2}(a_0,a_1;x)
+xF^{(1)}_{n-1,2}(a_0,a_1;x)F^{(1)}_{m,2}(a_0,a_1;x)
$$
$$
=(a_0\varphi(x)-a_1)^2F^{(1)}_{m+n,2}(0,1;x)+a_0(2a_1-a_0)\varphi(x)^{m+n}
$$
Otherwise, we have
$$
F^{(1)}_{n,2}(a_0,a_1;-1/4)F^{(1)}_{m+1,2}(a_0,a_1;-1/4)
+xF^{(1)}_{n-1,2}(a_0,a_1;-1/4)F^{(1)}_{m,2}(a_0,a_1;-1/4)
$$
$$
=\frac{a^2_0(m+n-2)}{2^{m+n+1}}-\frac{a_0a_1(m+n-1)}{2^{m+n-1}}
+\frac{(m+n)a^2_1}{2^{m+n-1}}
$$
In particular, whatever $x$ is, we have
$$
F^{(1)}_{n,2}(0,1;x)F^{(1)}_{m+1,2}(0,1;x)
+xF^{(1)}_{n-1,2}(0,1;x)F^{(1)}_{m,2}(0,1;x)=F^{(1)}_{m+n,2}(0,1;x)
$$
\end{theorem}

\begin{proof}
Theorem \ref{t6.7} stems from Property \ref{p6.6}.
\end{proof}

\textit{Case $k=3$}

Table of the first polynomial terms of sequence $(F^{(1)}_{n,3}(0,0,1;x))$
$$
\begin{array}{l}
F^{(1)}_{0,3}(0,0,1;x)=0\\
F^{(1)}_{1,3}(0,0,1;x)=0\\
F^{(1)}_{2,3}(0,0,1;x)=1\\
F^{(1)}_{3,3}(0,0,1;x)=1\\
F^{(1)}_{4,3}(0,0,1;x)=1\\
F^{(1)}_{5,3}(0,0,1;x)=1+x\\
F^{(1)}_{6,3}(0,0,1;x)=1+2x\\
F^{(1)}_{7,3}(0,0,1;x)=1+3x\\
F^{(1)}_{8,3}(0,0,1;x)=1+4x+x^2
\end{array}
$$ 

\begin{property}\label{p6.8}
Let $n$ be a non-zero positive integer. For $n\geq 2$, we have
($n\geq 2$)
$$
F^{(1)}_{n,3}(0,0,1;x)={\displaystyle\sum^{\lfloor\frac{n-2}{3}\rfloor}_{k=0}}
{n-2k-2\choose k}x^k
$$
and ($n\geq 2$)
$$
F^{(1)}_{n,3}(0,1,0;x)=xF^{(1)}_{n-2,3}(0,0,1;x)
$$
Moreover, for $n\geq 1$, we have
$$
F^{(1)}_{n,3}(1,0,0;x)=xF^{(1)}_{n-1,3}(0,0,1;x)
$$
\end{property}

\begin{proof}
Property \ref{p6.8} can be proved in the same way as Property \ref{p6.2}.
\end{proof}

\begin{property}\label{p6.9}
The generating function of the polynomials $F^{(1)}_{n,3}(0,0,1;x)$ is
given by
$$
{\mathcal F}^{(1)}_3(0,0,1;x,y)={\displaystyle\sum^{+\infty}_{n=0}}
F^{(1)}_{n,3}(0,0,1;x)y^n=\frac{y^2}{1-y-xy^3}
$$
where
$$
1-y-xy^3\neq 0
$$
\end{property}

\begin{proof}
Property \ref{p6.9} can be proved in the same way as Property \ref{p6.3}.
\end{proof}

\begin{property}\label{p6.10}
Let $a_0,a_1,a_2$ be three integers. We have
$$
F^{(1)}_{n,3}(a_0,a_1,a_2;x)=a_0F^{(1)}_{n,3}(1,0,0;x)+a_1F^{(1)}_{n,3}(0,1,0;x)
+a_2F^{(1)}_{n,3}(0,0,1;x),
\,\,\,\forall\,n\geq 0
$$
\end{property}

\begin{proof}
Property \ref{p6.10} can be proved in the same way as Property \ref{p6.4}.
\end{proof}

\begin{theorem}\label{t6.11}
Let $m$ be an integer which is greater than $2$ and let $n$ be a non-zero
positive integer. For $n\geq m-1$, we have
$$
F^{(1)}_{n,m}(0,\ldots,0,1;x)={\displaystyle\sum^{\lfloor\frac{n-m+1}{m}\rfloor}_{k=0}}
{n-(m-1)(k+1)\choose k}x^k
$$
Moreover, for $n\geq 1$, we have
$$
F^{(1)}_{n,m}(1,0,\ldots,0)=xF^{(1)}_{n-1,m}(0,\ldots,0,1;x)
$$
and for $n\geq i$ with $i\in\{2,\ldots,m-1\}$ when $m>2$, we have
$$
F^{(1)}_{n,m}(0,\ldots,0_{i-2},1_{i-1},0_i,\ldots,0;x)=xF^{(1)}_{n-i,m}(0,\ldots,0,1;x)
$$
where $0_l$ means $a_l=0$ with $l\in\{i-2,i\}$ and $1_{i-1}$ means $a_{i-1}=1$ in 
$$
F^{(1)}_{n,m}(a_0,\ldots,a_{i-2},a_{i-1},a_i,\ldots,a_{m-1};x)
$$ 
\end{theorem}

\begin{proof}
Theorem \ref{t6.11} can be proved in the same way as Property \ref{p6.2}.
\end{proof}

\begin{theorem}\label{t6.12}
Let $m$ be an integer which is greater than $2$. 
The generating function of the polynomials $F^{(1)}_{n,m}(0,\ldots,0,1;x)$ is
given by
$$
{\mathcal F}^{(1)}_m(0,\ldots,0,1;x,y)={\displaystyle\sum^{+\infty}_{n=0}}
F^{(1)}_{n,m}(0,\ldots,0,1;x)y^n=\frac{y^{m-1}}{1-y-xy^m}
$$
where
$$
1-y-xy^m\neq 0
$$
\end{theorem}

\begin{proof}
Theorem \ref{t6.12} can be proved in the same way as Property \ref{p6.3}.
\end{proof}

\begin{theorem}\label{t6.13}
Let $m$ be an integer which is greater than $2$ and let
$a_0,a_1,\ldots,a_{m-1}$ be $m$ integers. We have 
$$
F^{(1)}_{n,m}(a_0,a_1,\ldots,a_{m-1};x)=a_0F^{(1)}_{n,m}(1,0,\ldots,0;x)
+a_1F^{(1)}_{n,m}(0,1,0,\ldots,0;x)
$$
$$
+\ldots+a_{m-1}F^{(1)}_{n,m}(0,\ldots,0,1;x),
\,\,\,\forall\,n\geq 0
$$
\end{theorem}

\begin{proof}
Theorem \ref{t6.13} can be proved in the same way as Property \ref{p6.4}.
\end{proof}

\begin{definition}\label{d6.14}

Let $k$ be an integer which is greater than $2$ and let
$a_0,\ldots, a_{k-1}$ be $k$ numbers.

The polynomial sequence $(F^{(2)}_{n,k}(a_0,\ldots,a_{k-1};x))$ in one
indeterminate $x$ is defined by ($k\geq 2$)
$$
F^{(2)}_{n,k}(a_0,\ldots,a_{k-1};x)=xF^{(2)}_{n-1,k}(a_0,\ldots,a_{k-1};x)
+F^{(2)}_{n-k,k}(a_0,\ldots,a_{k-1};x),\,\,\,\forall\,n\geq k
$$
with
$$
F^{(2)}_{i,k}(a_0,\ldots,a_{k-1};x)=a_i,\,\,\,\forall\,i\in\{0,\ldots,k-1\}
$$
The $k$-Fibonacci numbers sequence $(F_{n,k}(a_0,\ldots,a_{k-1}))$
with initial conditions 
$a_0,\ldots,a_{k-1}$ are obtained from this polynomial sequence
by substituting $x$ by $1$ in the sequence
$(F^{(2)}_{n,k}(a_0,\ldots,a_{k-1};x))$. This polynomial sequence is called
the $k$-Fibonacci polynomial sequence of the second kind with initial conditions
$a_0,\ldots,a_{k-1}$. 

\end{definition}

\textit{Case $k=2$}

Table of the first polynomial terms of sequence $(F^{(2)}_{n,2}(0,1;x))$
$$
\begin{array}{l}
F^{(2)}_{0,2}(0,1;x)=0\\
F^{(2)}_{1,2}(0,1;x)=1\\
F^{(2)}_{2,2}(0,1;x)=x\\
F^{(2)}_{3,2}(0,1;x)=x^2+1\\
F^{(2)}_{4,2}(0,1;x)=x^3+2x=x(x^2+2)\\
F^{(2)}_{5,2}(0,1;x)=x^4+3x^2+1\\
F^{(2)}_{6,2}(0,1;x)=x^5+4x^3+3x\\
\end{array}
$$
Table of the first polynomial terms of sequence $(F^{(2)}_{n,2}(1,0;x))$
$$
\begin{array}{l}
F^{(2)}_{0,2}(1,0;x)=1\\
F^{(2)}_{1,2}(1,0;x)=0\\
F^{(2)}_{2,2}(1,0;x)=1\\
F^{(2)}_{3,2}(1,0;x)=x\\
F^{(2)}_{4,2}(1,0;x)=x^2+1\\
F^{(2)}_{5,2}(1,0;x)=x^3+2x=x(x^2+2)\\
F^{(2)}_{6,2}(1,0;x)=x^4+3x^2+1\\
\end{array}
$$

\begin{property}\label{p6.15}
Let $n$ be an integer which is greater than $2$. We have ($n\geq 2$):
$$
F^{(2)}_{n,2}(1,0;x)={\displaystyle\sum^{\lfloor\frac{n-2}{2}\rfloor}_{k=0}}
{n-k-2\choose k}x^{n-2k-2}
$$
and ($n\geq 0$) 
$$
F^{(2)}_{n,2}(0,1;x)=F^{(2)}_{n+1,2}(1,0;x)
$$
or ($n\geq 1$)
$$
F^{(2)}_{n-1,2}(0,1;x)=F^{(2)}_{n,2}(1,0;x)
$$
\end{property}

\begin{proof}
Let prove the first part of Property \ref{p6.15} by induction on the
integer $n\geq 2$. We have
$$
F^{(2)}_{2,2}(1,0;x)=1={\displaystyle\sum^0_{k=0}}
{2-k-2\choose k}x^{2-2k-2}
$$ 
Thus, we verify that Property \ref{p6.15} is true for $n=2$. Let assume
that Property \ref{p6.15} is true up to an integer $n\geq 2$. Using
Definition \ref{d6.14}, we have ($n\geq 1$)
$$
F^{(2)}_{n+1,2}(1,0;x)=xF^{(2)}_{n,2}(1,0;x)+F^{(2)}_{n-1,2}(1,0;x)
$$
So, using the assumption, it comes that
$$
F^{(2)}_{n+1,2}(1,0;x)={\displaystyle\sum^{\lfloor\frac{n-2}{2}\rfloor}_{k=0}}
{n-k-2\choose k}x^{n-2k-1}+{\displaystyle\sum^{\lfloor\frac{n-3}{2}\rfloor}_{k=0}}
{n-k-3\choose k}x^{n-2k-3}
$$
Performing the change of label $k\rightarrow m=k+1$, after renaming
$m$ by $k$, we have
$$
F^{(2)}_{n+1,2}(1,0;x)={\displaystyle\sum^{\lfloor\frac{n-2}{2}\rfloor}_{k=0}}
{n-k-2\choose k}x^{n-2k-1}+{\displaystyle\sum^{\lfloor\frac{n-1}{2}\rfloor}_{k=1}}
{n-k-2\choose k-1}x^{n-2k-1}
$$
$$
F^{(2)}_{n+1,2}(1,0;x)=x^{n-1}+{\displaystyle\sum^{\lfloor\frac{n-2}{2}\rfloor}_{k=1}}
{n-k-2\choose k}x^{n-2k-1}+{\displaystyle\sum^{\lfloor\frac{n-1}{2}\rfloor}_{k=1}}
{n-k-2\choose k-1}x^{n-2k-1}
$$
Or
$$
\lfloor\frac{n-1}{2}\rfloor=\left\{\begin{array}{ccc}
\lfloor\frac{n-2}{2}\rfloor & if & n\equiv 0\pmod 2
\\\\
\lfloor\frac{n-2}{2}\rfloor+1 & if & n\equiv 1\pmod 2
\end{array}\right.
$$
If $n$ is even, then we have
$$
F^{(2)}_{n+1,2}(1,0;x)=x^{n-1}+{\displaystyle\sum^{\lfloor\frac{n-1}{2}\rfloor}_{k=1}}
\left\{{n-k-2\choose k}+{n-k-2\choose k-1}\right\}x^{n-2k-1}
$$
Using the combinatorial identity
$$
{n-k-2\choose k}+{n-k-2\choose k-1}={n-k-1\choose k}
$$
we obtain ($n$ even)
$$
F^{(2)}_{n+1,2}(1,0;x)=x^{n-1}+{\displaystyle\sum^{\lfloor\frac{n-1}{2}\rfloor}_{k=1}}
{n-k-1\choose k}x^{n-2k-1}
$$
$$
F^{(2)}_{n+1,2}(1,0;x)={\displaystyle\sum^{\lfloor\frac{n-1}{2}\rfloor}_{k=0}}
{n-k-1\choose k}x^{n-2k-1}={\displaystyle\sum^{\lfloor\frac{n+1-2}{2}\rfloor}_{k=0}}
{n+1-k-2\choose k}x^{n+1-2k-2}
$$
If $n$ is odd, then we have
$$
F^{(2)}_{n+1,2}(1,0;x)=x^{n-1}+{\displaystyle\sum^{\lfloor\frac{n-2}{2}\rfloor}_{k=1}}
\left\{{n-k-2\choose k}+{n-k-2\choose k-1}\right\}x^{n-2k-1}
$$
$$
+{n-\lfloor\frac{n-1}{2}\rfloor-2\choose \lfloor\frac{n-1}{2}\rfloor-1}
x^{n-2\lfloor\frac{n-1}{2}\rfloor-1}
$$
Using again the combinatorial identity
$$
{n-k-2\choose k}+{n-k-2\choose k-1}={n-k-1\choose k}
$$
it gives ($n$ odd)
$$
F^{(2)}_{n+1,2}(1,0;x)=x^{n-1}+{\displaystyle\sum^{\lfloor\frac{n-2}{2}\rfloor}_{k=1}}
{n-k-1\choose k}x^{n-2k-1}
+{n-\lfloor\frac{n-1}{2}\rfloor-2\choose \lfloor\frac{n-1}{2}\rfloor-1}
x^{n-2\lfloor\frac{n-1}{2}\rfloor-1}
$$
Or ($k>0$)
$$
{n-k-1\choose k}=\frac{n-k-1}{k}{n-k-2\choose k-1}
$$
and 
$$
n-1=\left\{\begin{array}{ccc}
2\lfloor\frac{n-1}{2}\rfloor+1 & if & n\equiv 0\pmod 2
\\\\
2\lfloor\frac{n-1}{2}\rfloor & if & n\equiv 1\pmod 2
\end{array}\right.
$$
In particular, when $n$ is odd, we have
$n-1=2\lfloor\frac{n-1}{2}\rfloor$ and so 
$n-\lfloor\frac{n-1}{2}\rfloor-1=\lfloor\frac{n-1}{2}\rfloor$.
Accordingly, we have
$$
{n-\lfloor\frac{n-1}{2}\rfloor-2\choose \lfloor\frac{n-1}{2}\rfloor-1}
={n-\lfloor\frac{n-1}{2}\rfloor-1\choose \lfloor\frac{n-1}{2}\rfloor}
$$
So, if $n$ is odd ($n>2$), then we have
$$
F^{(2)}_{n+1,2}(1,0;x)=x^{n-1}+{\displaystyle\sum^{\lfloor\frac{n-2}{2}\rfloor}_{k=1}}
{n-k-1\choose k}x^{n-2k-1}
+{n-\lfloor\frac{n-1}{2}\rfloor-1\choose \lfloor\frac{n-1}{2}\rfloor}
x^{n-2\lfloor\frac{n-1}{2}\rfloor-1}
$$
$$
F^{(2)}_{n+1,2}(1,0;x)=x^{n-1}+{\displaystyle\sum^{\lfloor\frac{n-1}{2}\rfloor}_{k=1}}
{n-k-1\choose k}x^{n-2k-1}
$$
$$
F^{(2)}_{n+1,2}(1,0;x)={\displaystyle\sum^{\lfloor\frac{n-1}{2}\rfloor}_{k=0}}
{n-k-1\choose k}x^{n-2k-1}={\displaystyle\sum^{\lfloor\frac{n+1-2}{2}\rfloor}_{k=0}}
{n+1-k-2\choose k}x^{n+1-2k-2}
$$
So, the first part of Property \ref{p6.15} is proved by induction on
the integer $n\geq 2$.

Afterwards, let prove the second part of Property \ref{p6.15} by
induction on the integer $n\geq 0$. We have
$$
F^{(2)}_{0,2}(0,1;x)=0=F^{(2)}_{1,2}(1,0;x)
$$
Thus, we verify that the second part of Property \ref{p6.15} is true
for $n=0$. Let assume that Property \ref{p6.15} is true up to an
integer $n\geq 0$. Using Definition \ref{d6.14}, we have ($n\geq 1$)
$$
F^{(2)}_{n+1,2}(0,1;x)=xF^{(2)}_{n,2}(0,1;x)+F^{(2)}_{n-1,2}(0,1;x)
$$
Using the assumption, it gives ($n\geq 0$)
$$
F^{(2)}_{n+1,2}(0,1;x)=xF^{(2)}_{n+1,2}(1,0;x)+F^{(2)}_{n,2}(1,0;x)
$$
Using again Definition \ref{d6.14}, we get ($n\geq 0$)
$$
F^{(2)}_{n+1,2}(0,1;x)=F^{(2)}_{n+2,2}(1,0;x)
$$
So, the second part of Property \ref{p6.15} is proved by induction on
the integer $n\geq 0$.
\end{proof}

\begin{property}\label{p6.16}
The generating function of the polynomials $F^{(2)}_{n,2}(1,0;x)$ is
given by
$$
{\mathcal F}^{(2)}_2(1,0;x,y)={\displaystyle\sum^{+\infty}_{n=0}}
F^{(2)}_{n,2}(1,0;x)y^n=\frac{1-xy}{1-xy-y^2}
$$
where
$$
y\neq\frac{-x\pm\sqrt{x^2+4}}{2}
$$
\end{property}

\begin{proof}
The generating function of the polynomials $F^{(2)}_{n,2}(1,0;x)$ is
defined by:
$$
{\mathcal F}^{(2)}_2(1,0;x,y)={\displaystyle\sum^{+\infty}_{n=0}}
F^{(2)}_{n,2}(1,0;x)y^n
$$
Since $F^{(2)}_{0,2}(1,0;x)=1$ and since $F^{(2)}_{1,2}(1,0;x)=0$, we
have
$$
{\mathcal F}^{(2)}_2(1,0;x,y)=1+{\displaystyle\sum^{+\infty}_{n=2}}
F^{(2)}_{n,2}(1,0;x)y^n
$$
Using Definition \ref{d6.14}, we have
$$
{\mathcal F}^{(2)}_2(1,0;x,y)=1+{\displaystyle\sum^{+\infty}_{n=2}}
(xF^{(2)}_{n-1,2}(1,0;x)+F^{(2)}_{n-2,2}(1,0;x))y^n
$$
$$
{\mathcal F}^{(2)}_2(1,0;x,y)=1+x{\displaystyle\sum^{+\infty}_{n=2}}
F^{(2)}_{n-1,2}(1,0;x)y^n+{\displaystyle\sum^{+\infty}_{n=2}}F^{(2)}_{n-2,2}(1,0;x)y^n
$$
Or
$$
{\displaystyle\sum^{+\infty}_{n=2}}F^{(2)}_{n-1,2}(1,0;x)y^n
={\displaystyle\sum^{+\infty}_{n=1}}F^{(2)}_{n,2}(1,0;x)y^{n+1}
$$
where we performed the change of label $n\rightarrow m=n-1$ and after
we renamed $m$ by $n$.
Moreover, we have
$$
{\displaystyle\sum^{+\infty}_{n=2}}F^{(2)}_{n-2,2}(1,0;x)y^n
={\displaystyle\sum^{+\infty}_{n=0}}F^{(2)}_{n,2}(1,0;x)y^{n+2}
$$
where we performed the change of label $n\rightarrow l=n-2$ and after
we renamed $l$ by $n$. It results that
$$
{\mathcal F}^{(2)}_2(1,0;x,y)=1+xy{\displaystyle\sum^{+\infty}_{n=1}}
F^{(2)}_{n,2}(1,0;x)y^n+y^2{\displaystyle\sum^{+\infty}_{n=0}}F^{(2)}_{n,2}(1,0;x)y^n
$$
$$
{\mathcal F}^{(2)}_2(1,0;x,y)=1+xy({\mathcal F}^{(2)}_2(1,0;x,y)-1)
+y^2{\mathcal F}^{(2)}_2(1,0;x,y)
$$
$$
(1-xy-y^2){\mathcal F}^{(2)}_2(1,0;x,y)=1-xy
$$
Therefore
$$
{\mathcal F}^{(2)}_2(1,0;x,y)=\frac{1-xy}{1-xy-y^2}
$$
where
$$
y\neq\frac{-x\pm\sqrt{x^2+4}}{2}
$$
\end{proof}

\begin{property}\label{p6.17}
Let $a_0,a_1$ be two integers. We have
$$
F^{(2)}_{n,2}(a_0,a_1;x)=a_0F^{(2)}_{n,2}(1,0;x)+a_1F^{(2)}_{n,2}(0,1;x),
\,\,\,\forall\,n\geq 0
$$
$$
F^{(2)}_{n,2}(a_0,a_1;x)=a_0F^{(2)}_{n,2}(1,0;x)+a_1F^{(2)}_{n+1,2}(1,0;x),
\,\,\,\forall\,n\geq 0
$$
\end{property}

\begin{proof}
Let prove the first part of Property \ref{p6.17} by induction on the
integer $n\geq 0$. The second part of Property \ref{p6.17} follows from
Property \ref{p6.15}. Since $F^{(2)}_{0,2}(1,0;x)=1$ and since
$F^{(2)}_{0,2}(0,1;x)=0$, we have 
$$
F^{(2)}_{0,2}(a_0,a_1;x)=a_0=a_0F^{(2)}_{0,2}(1,0;x)+a_1F^{(2)}_{0,2}(0,1;x)
$$
Thus, we verify that the first part of Property \ref{p6.17} is true for
$n=0$. Let assume that the first part of Property \ref{p6.17} is true
up to an integer $n\geq 0$. Using Definition \ref{d6.14}, we have
$$
F^{(2)}_{n+1,2}(a_0,a_1;x)=xF^{(2)}_{n,2}(a_0,a_1;x)+F^{(2)}_{n-1,2}(a_0,a_1;x)
$$
Using the assumption, we have
$$
F^{(2)}_{n+1,2}(a_0,a_1;x)=x(a_0F^{(2)}_{n,2}(1,0;x)+a_1F^{(2)}_{n,2}(0,1;x))
+a_0F^{(2)}_{n-1,2}(1,0;x)+a_1F^{(2)}_{n-1,2}(0,1;x)
$$
Rearranging the different terms in the right hand side of the previous
equation, it gives
$$
F^{(2)}_{n+1,2}(a_0,a_1;x)=a_0(xF^{(2)}_{n,2}(1,0;x)+F^{(2)}_{n-1,2}(1,0;x))
+a_1(xF^{(2)}_{n,2}(0,1;x)+F^{(2)}_{n-1,2}(0,1;x))
$$
Using again Definition \ref{d6.14}, we get
$$
F^{(2)}_{n+1,2}(a_0,a_1;x)=a_0F^{(2)}_{n+1,2}(1,0;x)+a_1F^{(2)}_{n+1,2}(0,1;x)
$$
So, the first part of Property \ref{p6.17} is proved by induction on
the integer $n\geq 0$.
\end{proof}

\begin{property}\label{p6.18}

Let $a_0$ and $a_1$ be two numbers and let $n$ be a positive
integer. We have ($n\geq 0$) 
$$
F^{(2)}_{n,2}(a_0,a_1;x)=\left\{\begin{array}{ccc}
na_1\left(\frac{x}{2}\right)^{n-1}-(n-1)a_0\left(\frac{x}{2}\right)^n
& if & x=\pm 2i 
\\\\
\left(\frac{a_0(g(x)-x)+a_1}{2g(x)-x}\right)g(x)^n
+\left(\frac{a_0g(x)-a_1}{2g(x)-x}\right)(x-g(x))^n
& if & x\neq\pm 2i 
\end{array}\right.
$$
In particular, we have
$$
F^{(2)}_{n,2}(0,1;x)=\left\{\begin{array}{ccc}
n\left(\frac{x}{2}\right)^{n-1} & if & x=\pm 2i\\\\
\frac{g(x)^n-(x-g(x))^n}{2g(x)-x} & if & x\neq\pm 2i
\end{array}\right.
$$
$$
F^{(2)}_{n,2}(1,0;x)=\left\{\begin{array}{ccc}
(1-n)\left(\frac{x}{2}\right)^n & if & x=\pm 2i\\\\
\frac{g(x)^{n-1}-(x-g(x))^{n-1}}
{2g(x)-x} & if & x\neq\pm 2i
\end{array}\right.
$$
where
$$
g(x)=\frac{x+\sqrt{x^2+4}}{2}
$$
which verify
$$
g(x)^2=xg(x)+1\qquad g(x)(g(x)-x)=1
$$
We have also
$$
g(x)^2+1=g(x)(2g(x)-x)\qquad (x-g(x))^2+1=-(x-g(x))(2g(x)-x)
$$
\end{property}

\begin{proof}
Property \ref{p6.18} can be proved easily by induction or Property can be proved
in the same way as Property \ref{p11}.
\end{proof}

\begin{theorem}\label{t6.19}
Let $a_0$ and $a_1$ be two numbers and let $n$ and $m$ be two positive
integers. If $x\neq\pm 2i$, then we have
$$
F^{(2)}_{n,2}(a_0,a_1;x)F^{(2)}_{m+1,2}(a_0,a_1;x)
+F^{(2)}_{n-1,2}(a_0,a_1;x)F^{(2)}_{m,2}(a_0,a_1;x)
$$
$$
=(a_0g(x)-a_1)^2F^{(2)}_{m+n,2}(0,1;x)+a_0(2a_1-a_0x)g(x)^{m+n}
$$
Otherwise, we have
$$
F^{(2)}_{n,2}(a_0,a_1;x)F^{(1)}_{m+1,2}(a_0,a_1;x)
+F^{(2)}_{n-1,2}(a_0,a_1;x)F^{(2)}_{m,2}(a_0,a_1;x)
$$
$$
=(m+n-2)a^2_0\left(\frac{x}{2}\right)^{m+n+1}-2(m+n-1)a_0a_1
\left(\frac{x}{2}\right)^{m+n}
+(m+n)a^2_1\left(\frac{x}{2}\right)^{m+n-1}
$$
In particular, whatever $x$ is, we have
$$
F^{(2)}_{n,2}(0,1;x)F^{(2)}_{m+1,2}(0,1;x)
+xF^{(2)}_{n-1,2}(0,1;x)F^{(2)}_{m,2}(0,1;x)=F^{(2)}_{m+n,2}(0,1;x)
$$
\end{theorem}

\begin{proof}
Theorem \ref{t6.19} stems from Property \ref{p6.18}.
\end{proof}

\textit{Case $k=3$}

Table of the first polynomial terms of sequence $(F^{(2)}_{n,3}(1,0,0;x))$
$$
\begin{array}{l}
F^{(2)}_{0,3}(1,0,0;x)=1\\
F^{(2)}_{1,3}(1,0,0;x)=0\\
F^{(2)}_{2,3}(1,0,0;x)=0\\
F^{(2)}_{3,3}(1,0,0;x)=1\\
F^{(2)}_{4,3}(1,0,0;x)=x\\
F^{(2)}_{5,3}(1,0,0;x)=x^2\\
F^{(2)}_{6,3}(1,0,0;x)=1+x^3\\
F^{(2)}_{7,3}(1,0,0;x)=2x+x^4=x(2+x^3)\\
F^{(2)}_{8,3}(1,0,0;x)=3x^2+x^5=x^2(3+x^3)
\end{array}
$$ 

\begin{property}\label{p6.20}
Let $n$ be an integer which is greater than $2$. We have ($n\geq 3$):
$$
F^{(2)}_{n,3}(1,0,0;x)={\displaystyle\sum^{\lfloor\frac{n-3}{3}\rfloor}_{k=0}}
{n-2k-3\choose k}x^{n-3k-3}
$$
and ($n\geq 0$) 
$$
F^{(2)}_{n,3}(0,0,1;x)=F^{(2)}_{n+1,3}(1,0,0;x)=F^{(2)}_{n+2,3}(0,1,0;x)
$$
\end{property}

\begin{proof}
Property \ref{p6.20} can be proved in the same way as Property \ref{p6.15}.
\end{proof}

\begin{property}\label{p6.21}
The generating function of the polynomials $F^{(2)}_{n,3}(1,0,0;x)$ is
given by:
$$
{\mathcal F}^{(2)}_3(1,0,0;x,y)={\displaystyle\sum^{+\infty}_{n=0}}
F^{(2)}_{n,3}(1,0,0;x)y^n=\frac{1-xy}{1-xy-y^3}
$$
where
$$
1-xy-y^3\neq 0
$$
\end{property}

\begin{proof}
Property \ref{p6.21} can be proved in the same way as Property \ref{p6.16}.
\end{proof}

\begin{property}\label{p6.22}
Let $a_0,a_1,a_2$ be three integers. We have
$$
F^{(2)}_{n,3}(a_0,a_1,a_2;x)=a_0F^{(2)}_{n,3}(1,0,0;x)+a_1F^{(2)}_{n,3}(0,1,0;x)
+a_2F^{(2)}_{n,2}(0,0,1;x),
\,\,\,\forall\,n\geq 0
$$
\end{property}

\begin{theorem}\label{t6.23}
Let $m$ be an integer which is greater than $2$ and let $n$ be a non-zero
positive integer. For $n\geq m$, we have
$$
F^{(2)}_{n,m}(1,0,\ldots,0;x)={\displaystyle\sum^{\lfloor\frac{n-m}{m}\rfloor}_{k=0}}
{n-(m-1)k-m\choose k}x^{n-m(k+1)}
$$
Moreover, for $n\geq 0$, we have
$$
F^{(2)}_{n+1,m}(1,0,\ldots,0)=F^{(2)}_{n,m}(0,\ldots,0,1;x)
$$
and for $n\geq 0$ with $i\in\{2,\ldots,m-1\}$ when $m>2$, we have
$$
F^{(2)}_{n+i,m}(0,\ldots,0_{i-2},1_{i-1},0_i,\ldots,0;x)=F^{(2)}_{n,m}(0,\ldots,0,1;x)
$$
where $0_l$ means $a_l=0$ with $l\in\{i-2,i\}$ and $1_{i-1}$ means $a_{i-1}=1$ in 
$$
F^{(2)}_{n,m}(a_0,\ldots,a_{i-2},a_{i-1},a_i,\ldots,a_{m-1};x)
$$ 
\end{theorem}

\begin{proof}
Theorem \ref{t6.23} can be proved in the same way as Property \ref{p6.15}.
\end{proof}

\begin{theorem}\label{t6.24}
Let $m$ be an integer which is greater than $2$. 
The generating function of the polynomials $F^{(2)}_{n,m}(1,0,\ldots,0;x)$ is
given by
$$
{\mathcal F}^{(2)}_m(1,0,\ldots,0;x,y)={\displaystyle\sum^{+\infty}_{n=0}}
F^{(2)}_{n,m}(1,0,\ldots,0;x)y^n=\frac{1-xy}{1-xy-y^m}
$$
where
$$
1-xy-y^m\neq 0
$$
\end{theorem}

\begin{proof}
Theorem \ref{t6.24} can be proved in the same way as Property \ref{p6.3}.
\end{proof}

\begin{theorem}\label{t6.25}
Let $m$ be an integer which is greater than $2$ and let
$a_0,a_1,\ldots,a_{m-1}$ be $m$ integers. We have 
$$
F^{(2)}_{n,m}(a_0,a_1,\ldots,a_{m-1};x)=a_0F^{(2)}_{n,m}(1,0,\ldots,0;x)
+a_1F^{(2)}_{n,m}(0,1,0,\ldots,0;x)
$$
$$
+\ldots+a_{m-1}F^{(2)}_{n,m}(0,\ldots,0,1;x),
\,\,\,\forall\,n\geq 0
$$
\end{theorem}

\begin{proof}
Theorem \ref{t6.25} can be proved in the same way as Property \ref{p6.4}.
\end{proof}

The results presented in this section can be related to
other class of sequences as in \cite{hmsb}. 

\section{{Acknowledgements}}

The authors would like to thank Zakir Ahmed, Sudarson Nath and Ankush
Goswami for helpful comments related to the first two sections. The authors are grateful to the anonymous referee for helpful comments.

\end{document}